\documentclass[10pt,letterpaper]{amsart}
\usepackage[utf8]{inputenc}
\usepackage[english]{babel}
\usepackage{subfigure}
\usepackage{amsmath}
\usepackage{amsthm} 
\usepackage{hyperref} 
\usepackage{amsfonts}
\usepackage{tikz-cd}
\usepackage{amssymb}
\usepackage{csquotes}
\usepackage[
backend=biber,
style=alphabetic,
sorting=nty,doi=false,isbn=false, url=false
]{biblatex}
\usepackage{accents}
\newlength{\dhatheight}

\newtheorem*{theorem*}{Theorem}
\newtheorem{theorem}{Theorem}[section]
\newtheorem{prop}[theorem]{Proposition}
\newtheorem{defi}[theorem]{Definition}
\newtheorem{lemma}[theorem]{Lemma}
\newtheorem{corollary}[theorem]{Corollary}
\newtheorem{example}[theorem]{Example}

\newtheorem*{corollary*}{Corollary}
\newtheorem*{prop*}{Proposition}
\newtheorem{theo}{Theorem}
\newtheorem*{lemma*}{Lemma}
\newtheorem*{conj*}{Conjecture}
\newtheorem*{defi*}{Definition}

\addbibresource{references.bib}

\newcommand{\C}{\mathbb{C}}
\newcommand{\Z}{\mathbb{Z}}
\newcommand{\Pro}{\mathbb{P}}
\newcommand{\Q}{\mathbb{Q}}
\newcommand{\N}{\mathbb{N}}

\DeclareMathOperator*{\hocolim}{hocolim}
\DeclareMathOperator*{\holim}{holim}

\newcommand{\eff}{\text{eff}}
\newcommand{\et}{\text{\'et}}
\newcommand{\ch}{\text{Chow}}

\newcommand{\spc}{\text{Spec}}

\newcommand{\ho}{\text{Hom}}

\newcommand{\CH}{\text{CH}}

\theoremstyle{remark}
\newtheorem{remark}[theorem]{Remark}
\tikzcdset{
  cells={font=\everymath\expandafter{\the\everymath\displaystyle}},
}

\newcommand{\info}{{
  \bigskip
  \footnotesize

  \textsc{Institut de Math\'ematiques de Bourgogne, UMR 5584 CNRS, Universit\'e Bourgogne Franche-Comt\'e, F-21000 Dijon, France}\par\nopagebreak
  \textit{E-mail address}: \texttt{ivan-alejandro.rosas-soto@u-bourgogne.fr}
  }}

\subjclass[2010]{14C25, 14F20, 19E15}
\keywords{Algebraic cycles, \'etale motives, étale cohomology, motivic cohomology}

\usepackage[left=3cm,right=3.5cm,top=2cm,bottom=3.2cm]{geometry}
\author{Iv\'an Rosas-Soto}
\date{February 2024}
\title{Chow Künneth decomposition for \'etale motives}
\begin{document}
\maketitle
\begin{abstract}
In the present article we define an integral analogue of Chow-Künneth decomposition for étale motives. By using families of conservative functors we are able to establish a decomposition of the étale motive of commutative group schemes over a base and we relate to an integral étale Chow-Künneth decomposition of abelian varieties. For a projective variety $X$ of dimension $d$ over an algebraically closed field, we construct integral sub-motives $h^1_\et(X)$ and $h^{2d-1}_\et(X)$ of $h_\et(X)$.
\end{abstract}
\tableofcontents

\section{Introduction}
Let $S$ be a noetherian finite dimensional scheme and let $G/S$ a smooth commutative group scheme of finite type over $S$. According to \cite[Theorem 3.7]{AHP} in the motivic category $\text{DM}_\et(S,\Q)$  we have a decomposition of the relative motive $M_S(G)$ in the following way
\begin{align*}
    M_S(G) \xrightarrow{ \simeq } \left( \bigoplus_{n\geq 0}^{\text{kd}(G/S)} \text{Sym}^n M_1(G/S) \right)\otimes M(\pi_0(G/S)),
\end{align*}
where $M_1(G/S)$ is the 1-motive induced by the étale sheaf represented by $\underline{G/S} \otimes \Q$ and $\text{kd}(G/S):=\max\left\{2g_s+r_s \ | \ s \in S\right\}$ is the Kimura dimension, where $g_s$ is the abelian rank of $G_s$ and $r_s$ is the torus rank. The aim of this article is to see if we can lift this isomorphism to integral coefficients in $\text{DM}_\et(S,\Z)$. 

Let us recall some facts concerning the realization of $M_S(G) \in \text{DM}_\et(S,\Q)$. Consider the 1-motive associated to $G/S$, denoted by $M_1(G/S)$. As is stated in \cite[Section 5]{AHP}, the Betti realization of $M_1(G/S) \in \text{DM}\et(S,\Q)$  is given by $R_B(M_1(G/S)[-1]) = R^{2d-1}p^{an}_!\Q[2d] $. Also, we have that $R_B(M_S(G)) = p^{an}_! (p^{an})^!\Q =  p^{an}_!\Q[2d] $ and $p_!^{an}\Q(d)[2d]=\bigoplus_{i=0}^{\text{kd}(A)}\left(\bigwedge^i \mathcal{H}_1(G/S)\right)[i]$ where $\mathcal{H}_1(G/S):=R_B(M_1(G/S)[-1])$. On the other hand, for the $\ell-$adic realization of the 1-motive we have the following: consider the realization map 
\begin{align*}
    R_\ell:\text{DM}_c(S,\Q) \to D^b_c(S_\et,\Q_\ell)
\end{align*}
which is a covariant functor that is compatible with the six functor formalism on both sides. The $\ell$-adic sheaf $\mathcal{H}_1(G/S,\Q_\ell)$ is given by the torsion sheaves $G[\ell^n]$, this also holds integrally. Another important fact is related to with the étale cohomology of a connected commutative algebraic group $G$ over an algebraically closed field $k$. According to \cite[Lemma 4.1]{BSz} we have that
\begin{align*}
    H^*_\et(G,\Z/\ell) \simeq \bigwedge^{*}H^1_\et(G,\Z/\ell)
\end{align*}
for $\ell$ a prime number different from $\text{char}(k)$. The same also holds for coefficients $\Z_\ell$ and $\Q_\ell$ according to \cite[Lemma 15.2]{Mil84}.

The aim of this article is to obtain an integral analogue of the decompositon of the étale motive associated to a commutative group scheme over a base $S$ in the category $\text{DM}_\et(S,\Z)$, following the results of \cite{AHP}.

In order to obtain such decomposition, we use the \textbf{homotopy fixed points}  and \textbf{homotopy orbits} of $\mathfrak{S}_n$ of a motive $M_\et^S(X)$ which is defined as follows: knowing that $\text{DM}_\et(S,\Z)^{\otimes}$ has a structure of an $\infty$-category which is monoidal and symmetric, thus we obtain adjunctions 
 \begin{align*}
 ( \ )^{\text{triv}}: \text{DM}_\et(S,\Z)&\leftrightarrows \text{DM}_\et(S,\Z)^{B\mathfrak{S}_n}: ( \ )^{h\mathfrak{S}_n}:=\holim_{B\mathfrak{S}_n},\\    \hocolim_{B\mathfrak{S}_n}=:( \ )_{h\mathfrak{S}_n}: \text{DM}_\et(S,\Z)^{B\mathfrak{S}_n}&\leftrightarrows \text{DM}_\et(S,\Z): ( \ )^{\text{triv}}.
 \end{align*}
With this definitions, one obtains the integral analogue of \cite{AEWH} as follows:

\begin{theo}[Proposition \ref{teoprin}]
 Let $k$ be an algebraically closed field and $G/k$ a connected commutative group scheme. Then the morphism    
 \begin{align*}
      \phi_G:M_\et(G) \to \bigoplus_{i=0}^{\text{kd}(G)} \left(M_1(G)^{\otimes i}\right)^{h\mathfrak{S}_i}
 \end{align*}
 is an isomorphism in $\text{DM}_\et(k,\Z)$.
\end{theo}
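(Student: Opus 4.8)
The plan is to verify that $\phi_G$ is an isomorphism after applying a conservative family of exact, symmetric monoidal functors on $\text{DM}_\et(k,\Z)$ that commute with the homotopy fixed point functors $(-)^{h\mathfrak{S}_i}$. A convenient such family is the rationalisation $(-)_\Q\colon\text{DM}_\et(k,\Z)\to\text{DM}_\et(k,\Q)$ together with the mod-$\ell$ reductions $(-)\otimes^{\mathbb{L}}\Z/\ell$ for every prime $\ell$ invertible in $k$. This family is conservative: when $p:=\text{char}(k)>0$ one has $\text{DM}_\et(k,\Z/p)=0$ (since $\mathbb{G}_m\xrightarrow{p}\mathbb{G}_m$ is an isomorphism of étale sheaves over a perfect field of characteristic $p$, so $\Z/p(1)=0$), hence the objects of $\text{DM}_\et(k,\Z)$ are automatically $\Z[1/p]$-linear. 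Each member $F$ of the family is exact and monoidal, and since $(-)^{h\mathfrak{S}_i}=\holim_{B\mathfrak{S}_i}$ is a limit assembled from fibres and shifts, $F$ commutes with it; thus $F(\phi_G)$ is the comparison morphism built in the same way from $F(M_1(G))$, and it suffices to treat the rational case and the mod-$\ell$ cases separately.

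For the rational case, over $\Q$ the norm map $(-)_{h\mathfrak{S}_i}\to(-)^{h\mathfrak{S}_i}$ is an equivalence, and both sides compute the symmetric power $\text{Sym}^iM_1(G)_\Q$ carved out by the symmetrising idempotent on $M_1(G)^{\otimes i}$. Since $G$ is connected, $M(\pi_0(G/k))=\Z$, so the target of $(\phi_G)_\Q$ is $\bigoplus_{i=0}^{\text{kd}(G)}\text{Sym}^iM_1(G)_\Q$; unwinding the construction of $\phi_G$ one identifies $(\phi_G)_\Q$ with the decomposition of \cite[Theorem 3.7]{AHP}, which is an isomorphism.

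For the mod-$\ell$ case, rigidity identifies $\text{DM}_\et(k,\Z/\ell)$ with $D(\Z/\ell)$ and I compute both sides there. The source $M_\et(G)\otimes^{\mathbb{L}}\Z/\ell$ is the $\Z/\ell$-homology complex of $G$, whose cohomology by \cite[Lemma 4.1]{BSz} (see also \cite[Lemma 15.2]{Mil84}) is the exterior algebra on $\mathcal H:=H^1_\et(G,\Z/\ell)$, a $\Z/\ell$-vector space of dimension exactly $\text{kd}(G)$ (the abelian part contributing $2g$, the torus part $r$, the unipotent part $0$ since $\ell\neq\text{char}(k)$). On the target, $M_1(G)\otimes^{\mathbb{L}}\Z/\ell$ is $H_1^\et(G,\Z/\ell)$ concentrated in a single degree, so $(M_1(G)^{\otimes i})^{h\mathfrak{S}_i}\otimes^{\mathbb{L}}\Z/\ell$ is the homotopy $\mathfrak{S}_i$-fixed points of $(H_1^\et)^{\otimes i}$ equipped with the Koszul-signed permutation action; one identifies this with $\bigwedge^{i}H_1^\et(G,\Z/\ell)$ and, summing over $0\le i\le\text{kd}(G)=\dim H_1^\et$, recovers the full exterior algebra. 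Checking that $\phi_G\otimes^{\mathbb{L}}\Z/\ell$ is the evident comparison between these two exterior algebras (both governed by the perfect pairing $H_1^\et\simeq\mathcal H^\vee$) finishes this case.

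The main obstacle is precisely this last identification — that $(M_1(G)^{\otimes i})^{h\mathfrak{S}_i}\otimes^{\mathbb{L}}\Z/\ell$ is $\bigwedge^{i}H_1^\et(G,\Z/\ell)$ placed in the expected degree, with no leftover contribution from the higher cohomology of $\mathfrak{S}_i$ once $\ell\le i\le\text{kd}(G)$. Handling this forces one to exploit the representation-theoretic structure of the permutation-with-sign action, together with the comparison with the rational computation and with the bounded object $M_\et(G)\otimes^{\mathbb{L}}\Z/\ell$; this is where the role of $M_1(G)$ as an étale $1$-motive and the Kimura-type finiteness encoded by $\text{kd}(G)$ are indispensable. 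The remaining ingredients — conservativity of the family, vanishing of $\pi_0$, and naturality of $\phi_G$ under the realisations — are formal.
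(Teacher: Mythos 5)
Your overall architecture coincides with the paper's: reduce via the conservative family $\{(-)_\Q,\ (-)\otimes^{\mathbb{L}}\Z/\ell\}$, identify the rational piece with the decomposition of \cite[Theorem 3.7]{AHP}, and compute the mod-$\ell$ piece by matching both sides against the exterior algebra $\bigwedge^\ast H^1_\et(G,\Z/\ell)$ furnished by \cite[Lemma 4.1]{BSz}. Up to that point you are on track, and your aside about $\text{DM}_\et(k,\Z/p)=0$ when $p=\mathrm{char}(k)$ is a nice clean way to discharge that case.

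The genuine gap is exactly the one you flag as ``the main obstacle,'' and you leave it open. Concretely: for primes $\ell\le\mathrm{kd}(G)$ and indices $i$ with $\ell\mid i!$, the functor $(-)^{h\mathfrak{S}_i}\otimes^{\mathbb{L}}\Z/\ell$ is not computed by an idempotent in $\Z/\ell[\mathfrak{S}_i]$, and the higher cohomology of $\mathfrak{S}_i$ could in principle contribute. Your proposed remedy — ``exploit the representation-theoretic structure of the permutation-with-sign action, together with comparison with the rational computation and with the bounded object $M_\et(G)\otimes^{\mathbb{L}}\Z/\ell$'' — is a gesture, not an argument, and it is \emph{not} the route the paper takes. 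The paper instead sidesteps any direct analysis of $\mathfrak{S}_i$-cohomology: in Lemma \ref{lemmacoeffin} it first proves that $\mathrm{wSym}(\mathcal{H}_1(-,\Z/\ell))$ is multiplicative under products of group schemes, then observes that the group multiplication $m\colon G\times G\to G$ endows $\mathrm{wSym}(M_1(G)/\ell)$ with the structure of a connected graded anticommutative Hopf algebra over $\mathbb{F}_\ell$, and finally invokes the structure theorem for such Hopf algebras \cite[Lemma 15.2]{Mil84} to conclude it is the exterior algebra on $\mathcal{H}_1(G,\Z/\ell)$. Without this (or an equivalent) input, your mod-$\ell$ case is incomplete. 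A secondary, smaller soft spot: you justify commutation of the realization with $(-)^{h\mathfrak{S}_i}$ by saying the latter is ``a limit assembled from fibres and shifts''; since $\rho_{\Z/\ell}$ is a left adjoint this is not automatic, and the paper instead argues via the cone presentation $\rho_{\Z/\ell}(M)\simeq\mathrm{Cone}(\ell\cdot\mathrm{id}_M)$ in Lemma \ref{realization}(1). You should make that step explicit as well.
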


If we apply the following  result which mimic the conclusion given in \cite[Lemma A.6.]{AHP}

\begin{lemma*}[Lemma \ref{consCD}]
  Let $S$ be a scheme which has finite Krull dimension and the punctual p-cohomological dimension is bounded for all prime number $p$. Then the following holds:
  \begin{enumerate}
      \item Let $M \in \textbf{DA}^\et(S,\Z)$ be a motive. Then $M$ is zero if and only if the pullbacks $i^*_{\bar{s}}M$ to any geometric point $\bar{s} \to S$ is zero. 
      \item Let $f$ be a morphism in $\textbf{DA}^\et(S,\Z)$. Then $f$ is an isomorphism if and only if the pullback $i^*_{\bar{s}}(f)$ for any geometric point   $\bar{s} \to S$ is an isomorphism.
  \end{enumerate}
\end{lemma*}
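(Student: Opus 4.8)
The plan is to reduce (2) to (1) and then prove (1) by treating rational and torsion coefficients separately. For the reduction, note that a morphism $f$ in $\textbf{DA}^\et(S,\Z)$ is an isomorphism if and only if its cofiber $\mathrm{cone}(f)$ is zero, and since each $i^*_{\bar{s}}$ is an exact functor of stable $\infty$-categories we have $i^*_{\bar{s}}(\mathrm{cone}(f))\simeq\mathrm{cone}(i^*_{\bar{s}}(f))$; so (1), applied to $\mathrm{cone}(f)$, yields (2). In (1) the ``only if'' direction is immediate from functoriality, so the real content is: if $i^*_{\bar{s}}M\simeq 0$ for every geometric point $\bar{s}\to S$, then $M\simeq 0$. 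Here I would first use the fact that an object $M$ of $\textbf{DA}^\et(S,\Z)$ vanishes if and only if $M\otimes\Q\simeq 0$ and $M\otimes^{L}\Z/\ell\simeq 0$ for every prime $\ell$: indeed, if $M\otimes^{L}\Z/\ell\simeq 0$ for all $\ell$ then multiplication by every $n\geq 1$ is invertible on $M$, so the canonical map $M\to M\otimes\Q$ — the filtered colimit of these multiplications over the positive integers ordered by divisibility — is an equivalence, whence $M\simeq M\otimes\Q\simeq 0$. Since $i^*_{\bar{s}}$ is monoidal, $i^*_{\bar{s}}(M\otimes\Q)\simeq(i^*_{\bar{s}}M)\otimes\Q$ and $i^*_{\bar{s}}(M\otimes^{L}\Z/\ell)\simeq(i^*_{\bar{s}}M)\otimes^{L}\Z/\ell$, so the hypothesis propagates to $M\otimes\Q$ and to each $M\otimes^{L}\Z/\ell$; it therefore suffices to prove the vanishing criterion after $\otimes\Q$ and after each $\otimes^{L}\Z/\ell$.

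Rationally, $\textbf{DA}^\et(S,\Q)\simeq\text{DM}_\et(S,\Q)$, and the conservativity of the family of geometric pullbacks is precisely \cite[Lemma A.6]{AHP}; this is where the finite Krull dimension of $S$ is used, via a noetherian induction along the localization triangles $j_{!}j^{*}\to\mathrm{id}\to i_{*}i^{*}$ that reduces the statement to residue fields. For torsion coefficients, the rigidity theorem (Ayoub, Bachmann) identifies $\textbf{DA}^\et(S,\Z/\ell)$ with the derived category $D(S_\et,\Z/\ell)$ of étale sheaves of $\Z/\ell$-modules, under which $i^*_{\bar{s}}$ becomes the stalk functor at $\bar{s}$. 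Being exact, the stalk functor satisfies $\mathcal H^{n}(i^*_{\bar{s}}K)\simeq i^*_{\bar{s}}\mathcal H^{n}(K)$ for all $n$; since an étale sheaf all of whose geometric stalks vanish is zero, the assumption $i^*_{\bar{s}}K\simeq 0$ for all $\bar{s}$ forces $\mathcal H^{n}(K)=0$ for all $n$, and the hypothesis that the punctual $\ell$-cohomological dimension is bounded guarantees that such a $K$ is itself zero (the bounded amplitude makes the standard $t$-structure on $D(S_\et,\Z/\ell)$ left complete). Assembling the rational and $\ell$-torsion conclusions through the criterion above gives $M\simeq 0$.

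I expect the torsion case to be the main obstacle. The objects $M\otimes^{L}\Z/\ell$ are a priori unbounded, so one genuinely needs the bounded-cohomological-dimension hypothesis, both in order to have the rigidity equivalence in the unbounded setting and — more importantly — to pass from ``all geometric stalks vanish'' to ``the complex vanishes'', a step that fails without such a bound. In addition, for a prime $\ell$ equal to a residue characteristic of $S$ one must appeal to the appropriate variant of rigidity rather than the classical statement; once this bookkeeping is handled, the remainder of the argument is formal.
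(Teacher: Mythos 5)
Your proposal is correct in outline but follows a genuinely different route from the paper's. The paper mimics the proof of \cite[Lemma A.6]{AHP} directly in the integral setting: by \cite[Proposition 3.24]{Ayo} (conservativity of the pullbacks to points $x\in S$ under bounded punctual cohomological dimension) it first reduces to $S=\spc(k)$ a field; then, for $k$ perfect, it uses compact generation of $\textbf{DA}^\et(k,\Z)$ plus continuity \cite[Lemme 3.4]{Ayo} to descend the vanishing of $i^*_{\bar k}f$ (for $f\colon C\to N$ with $C$ compact) to a finite extension $K/k$, and finally invokes the separatedness result \cite[Théorème 3.9]{Ayo} to conclude $f=0$; the imperfect case is handled via the equivalence along a purely inseparable extension \cite[Proposition 6.3.16]{CD16}. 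You instead split the conservativity check along the arithmetic fracture $\rho_\Q$ and $\rho_{\Z/\ell}$, quote \cite[Lemma A.6]{AHP} as a black box rationally, and argue for $\Z/\ell$ via the rigidity identification with $D(S_\et,\Z/\ell)$ and exactness of geometric stalks. What your approach buys is that the torsion case becomes sheaf-theoretic and needs no compactness or continuity argument; what it costs is that the rational case is quoted rather than re-derived, so the two proofs are not independent.

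Two points deserve attention. First, your worry about left-completeness is somewhat misplaced in form: in the unbounded derived category $D(S_\et,\Z/\ell)$ of an abelian category, a complex with vanishing cohomology sheaves is zero by definition of quasi-isomorphism, and geometric stalks detect vanishing of $\mathcal H^n(K)$, so the conservativity-of-stalks step does not by itself need the bounded-dimension hypothesis. The place where the hypothesis genuinely intervenes is earlier, in ensuring the rigidity equivalence produces the expected model of $D(S_\et,\Z/\ell)$ (the various model- and $\infty$-categorical constructions agree precisely because the étale topos is locally of finite cohomological dimension). Second, and more substantively, the residue-characteristic primes are not merely ``bookkeeping'': for $\ell$ not invertible on $S$ the identification $\textbf{DA}^\et(S,\Z/\ell)\simeq D(S_\et,\Z/\ell)$ fails; what one must use is that $\textbf{DA}^\et(S,\Z/\ell)\simeq\textbf{DA}^\et(S[1/\ell],\Z/\ell)$ (cf.\ the localization/vanishing results in \cite{CD16}), so the test against geometric points factors through those of $S[1/\ell]$, and at geometric points of residue characteristic $\ell$ the $\Z/\ell$-linear category is zero and gives no information. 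You should spell this out, as otherwise the argument silently assumes $\ell$ invertible. With these two clarifications, your proof is a valid alternative to the paper's.
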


we then obtain the relative version of Proposition \ref{teoprin}:

\begin{theo}[Theorem \ref{relat}]
    Let $S$ be a scheme which has finite Krull dimension and the punctual p-cohomological dimension is bounded for all prime number $p$, and let $G$ be a connected commutative scheme over $S$. Then the morphism    
 \begin{align*}
      \phi_G:M_\et^S(G) \to \bigoplus_{i=0}^{\text{kd}(G/S)} \left(M_1(G/S)^{\otimes i}\right)^{h\mathfrak{S}_i}
 \end{align*}
 is an isomorphism in $\text{DM}_\et(S,\Z)$.
\end{theo}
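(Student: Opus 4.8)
The strategy is to deduce Theorem~\ref{relat} from its absolute counterpart Proposition~\ref{teoprin} by base change along geometric points, using Lemma~\ref{consCD}. I will use throughout that the comparison morphism $\phi_G$ is defined over an arbitrary base by the same construction as in the absolute case — the group structure of $G/S$ makes $M_\et^S(G)$ a cocommutative coalgebra in $\text{DM}_\et(S,\Z)$, out of which the maps to the homotopy fixed points of the $M_1(G/S)^{\otimes i}$ arise — and that this construction is natural in $S$: for any $f\colon T\to S$ the canonical isomorphisms $f^*M_\et^S(G)\simeq M_\et^T(G_T)$ and $f^*M_1(G/S)\simeq M_1(G_T/T)$ (the latter since $M_1(-)$ is extracted from the representable étale sheaf, which commutes with pullback) identify $f^*\phi_G$ with $\phi_{G_T}$.

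By the equivalence $\text{DM}_\et(S,\Z)\simeq\textbf{DA}^\et(S,\Z)$ and part~(2) of Lemma~\ref{consCD}, it is enough to prove that $i_{\bar s}^*\phi_G$ is an isomorphism for every geometric point $i_{\bar s}\colon\bar s\to S$. For such a point $G_{\bar s}$ is again a connected commutative group scheme over an algebraically closed field, so $\phi_{G_{\bar s}}$ is an isomorphism by Proposition~\ref{teoprin}, and by the naturality recalled above $i_{\bar s}^*\phi_G$ fits into a commutative square with $\phi_{G_{\bar s}}$ and the canonical comparison map
\begin{equation*}
 i_{\bar s}^*\Bigl(\bigoplus_{i=0}^{\text{kd}(G/S)}\bigl(M_1(G/S)^{\otimes i}\bigr)^{h\mathfrak{S}_i}\Bigr)\longrightarrow \bigoplus_{i=0}^{\text{kd}(G/S)}\bigl(M_1(G_{\bar s})^{\otimes i}\bigr)^{h\mathfrak{S}_i}.
\end{equation*}
So it suffices to show that this last map is an isomorphism. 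As $i_{\bar s}^*$ is symmetric monoidal and additive it commutes with the finite direct sum and with the tensor powers $(-)^{\otimes i}$, and by the first paragraph it carries $M_1(G/S)$, with its $\mathfrak{S}_i$-action, to $M_1(G_{\bar s})$; hence everything comes down to the claim that $i_{\bar s}^*$ commutes with the homotopy fixed points $(-)^{h\mathfrak{S}_i}=\holim_{B\mathfrak{S}_i}$.

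To establish this, factor $i_{\bar s}$ as a cofiltered limit of étale neighbourhoods $j_U\colon U\to S$ of $\bar s$. Each $j_U^*$ has a left adjoint $j_{U\sharp}$ (the étale case of the six operations), hence preserves all homotopy limits, in particular $(-)^{h\mathfrak{S}_i}$; and by continuity of étale motives $i_{\bar s}^*$ is the functor induced on the filtered colimit $\varinjlim_U\text{DM}_\et(U,\Z)$. It then remains to interchange this filtered colimit with the homotopy limit over $B\mathfrak{S}_i$, and this is exactly where the hypotheses on $S$ enter: finite Krull dimension together with uniformly bounded punctual $p$-cohomological dimension forces the homotopy-fixed-point spectral sequence for $\bigl(M_1(-)^{\otimes i}\bigr)^{h\mathfrak{S}_i}$ to be concentrated in a range of degrees bounded independently of $U$, so the relevant tower is uniformly bounded and the colimit commutes with it. Granting this, $i_{\bar s}^*\phi_G\simeq\phi_{G_{\bar s}}$ is an isomorphism for every $\bar s$, and Lemma~\ref{consCD}(2) concludes.

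The step I expect to be the main obstacle is precisely this commutation of $i_{\bar s}^*$ with homotopy fixed points. In the rational setting of \cite{AHP} the functor $(-)^{h\mathfrak{S}_i}$ coincides with $\text{Sym}^i$ and is a direct summand cut out by an idempotent, so it commutes with every additive functor automatically; integrally one must genuinely control the convergence of the homotopy-fixed-point spectral sequence, which is exactly why the finiteness and bounded-cohomological-dimension hypotheses on $S$ — the same ones that underpin Lemma~\ref{consCD} — cannot be dispensed with. An alternative would be to use that $M_1(G/S)$ is dualizable together with the duality exchanging $(-)^{h\mathfrak{S}_i}$ and the homotopy orbits $(-)_{h\mathfrak{S}_i}$, since the latter, being a homotopy colimit, is automatically preserved by $i_{\bar s}^*$; but this route too ultimately relies on a boundedness input of the same nature.
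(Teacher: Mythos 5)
The proposal follows essentially the same strategy as the paper: reduce to geometric points via Lemma~\ref{consCD}, identify $i_{\bar s}^*\phi_G$ with $\phi_{G_{\bar s}}$ using the base-change compatibility $f^*M_1(G/S)\simeq M_1(G_T/T)$ (which the paper checks rationally and mod $\ell$), and invoke Proposition~\ref{teoprin} at each $\bar s$. That is precisely the paper's own argument.

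Where you go beyond the paper is in explicitly confronting the step where the paper simply writes $\phi_{G_T}:=f^*(\phi_G)$ and asserts that its codomain is $\bigoplus_i(M_1(G_T/T)^{\otimes i})^{h\mathfrak{S}_i}$; this tacitly assumes $f^*$ commutes with the homotopy limit $(-)^{h\mathfrak{S}_i}=\holim_{B\mathfrak{S}_i}$, which the paper does not justify. You are right that this is a nontrivial point: $i_{\bar s}^*$ is a left adjoint, so it commutes with colimits and with finite limits, but $\holim_{B\mathfrak{S}_i}$ is not a finite limit, and a pro-\'etale pullback has no a priori reason to preserve it. So you have correctly isolated a genuine gap that the paper glides over.

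However, the specific mechanism you propose to close the gap does not work as stated. You claim that ``finite Krull dimension together with uniformly bounded punctual $p$-cohomological dimension forces the homotopy-fixed-point spectral sequence for $(M_1(-)^{\otimes i})^{h\mathfrak{S}_i}$ to be concentrated in a range of degrees bounded independently of $U$.'' This conflates two unrelated cohomological dimensions: Definition~\ref{gooden} bounds the \'etale cohomological dimension of residue fields of $S$, whereas the $E_2$-page of the homotopy-fixed-point spectral sequence is built from the group cohomology $H^*(\mathfrak{S}_i;\pi_*(-))$, which is unbounded for any nonzero $\ell$-torsion input with $\ell\leq i$, regardless of the scheme $S$. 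Bounding $\text{pcd}_p(S)$ does nothing to tame $H^*(B\mathfrak{S}_i;\Z/\ell)$, so the filtered colimit over \'etale neighbourhoods cannot be commuted past $\holim_{B\mathfrak{S}_i}$ by this route. Your alternative via duality exchanging $(-)^{h\mathfrak{S}_i}$ and $(-)_{h\mathfrak{S}_i}$ likewise requires vanishing of the Tate construction, which is precisely the point at issue. A more plausible repair would be to note that $\rho_\Q$ and each $\rho_{\Z/\ell}$ \emph{do} commute with $\holim_{B\mathfrak{S}_i}$ (as $\Z/\ell$ is a perfect $\Z$-complex), that these functors commute with $i_{\bar s}^*$, and that after $\rho_{\Z/\ell}$ the construction collapses to the finite object $\bigwedge^i\mathcal{H}_1(-,\Z/\ell)$ as in Lemma~\ref{lemmacoeffin}, which \emph{is} compatible with base change; conservativity of the family $\{\rho_\Q,\rho_{\Z/\ell}\}$ then yields the desired comparison isomorphism. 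In any case, the gap you found is real and worth addressing, but the spectral-sequence boundedness argument you give is not a valid fix.
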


We conclude the section about the decomposition of étale motives by giving a result involving the decompostion of the étale Chow motive of the product of Jacobian varieties:

\begin{theo}[Theorem \ref{prodJac}]
Let $k=\bar{k}$ be a field and consider $C_i/k$ a  projective smooth curve, for $i \in \{1,\ldots,n\}$. Then the variety $J(C_1)\times \ldots \times J(C_n)$ admits an integral Chow-Künneth decomposition. 
\end{theo}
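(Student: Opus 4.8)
The strategy is to apply Proposition \ref{teoprin} to the abelian variety $A:=J(C_1)\times\cdots\times J(C_n)$ and to recognise the resulting summands as the Chow–Künneth pieces. Write $g_i$ for the genus of $C_i$ and $g=\sum_i g_i=\dim A$, so that $\text{kd}(A)=2g$. Since $k=\bar k$, each $C_i$ has a rational point, and the classical construction produces an integral Chow–Künneth decomposition $h_\et(C_i)=\mathbf 1\oplus h^1_\et(C_i)\oplus\mathbf 1(1)[2]$ in which the projectors are algebraic cycle classes in $\CH^1(C_i\times C_i)=\CH^1_\et(C_i\times C_i)$. Using the integral identification of the motive of the Jacobian, $M_1(J(C_i))\simeq h^1_\et(C_i)$ in $\text{DM}_\et(k,\Z)$, together with the additivity of $M_1$ on products of commutative group schemes, one gets $M_1(A)\simeq\bigoplus_{i=1}^n h^1_\et(C_i)$.

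Applying Proposition \ref{teoprin} to the connected commutative group scheme $A$ gives an isomorphism $M_\et(A)\xrightarrow{\ \simeq\ }\bigoplus_{k=0}^{2g}\bigl(M_1(A)^{\otimes k}\bigr)^{h\mathfrak S_k}$ in $\text{DM}_\et(k,\Z)$. As $A$ is smooth and projective, $M_\et(A)$ lies in the essential image of the fully faithful embedding $\ch_\et(k,\Z)\hookrightarrow\text{DM}_\et(k,\Z)$, with $\mathrm{End}(M_\et(A))=\CH^g_\et(A\times A)$ under composition of correspondences; since $\ch_\et(k,\Z)$ is idempotent complete, each summand above is again an étale Chow motive, and the decomposition is realised by mutually orthogonal idempotents $\pi^k_A\in\CH^g_\et(A\times A)$ with $\sum_k\pi^k_A=\Delta_A$. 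Feeding in $M_1(A)=\bigoplus_i h^1_\et(C_i)$, distributing the tensor power, and using a homotopy-invariant Shapiro lemma together with the dualisability of every motive in sight (so that $(\ )\otimes N$ commutes with the homotopy limits defining $(\ )^{h\mathfrak S_k}$, and $(\ )^{h(G\times H)}\simeq((\ )^{hG})^{hH}$) yields the explicit form
\begin{align*}
  h^k_\et(A):=\bigl(M_1(A)^{\otimes k}\bigr)^{h\mathfrak S_k}\ \simeq\ \bigoplus_{k_1+\dots+k_n=k}\ \bigotimes_{i=1}^n\bigl(h^1_\et(C_i)^{\otimes k_i}\bigr)^{h\mathfrak S_{k_i}}.
\end{align*}

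It remains to check that the $\pi^k_A$ lift the Künneth components, i.e.\ that $h^k_\et(A)$ realises to $H^k_\et(A,\Z_\ell)$ placed in cohomological degree $k$ (for $\ell$ invertible in $k$). The $\ell$-adic realisation $R_\ell(M_\et(A))\simeq R\Gamma_\et(A,\Z_\ell)$ is formal with torsion-free cohomology $\bigwedge^\ast H^1_\et(A,\Z_\ell)$, hence splits as the sum of its cohomology modules in the correct degrees; consequently any direct summand of it is again a sum of free $\Z_\ell$-modules and is detected by its rationalisation. Now, by compatibility of the decomposition with $\Q$-coefficients, $h^k_\et(A)\otimes\Q$ is the $k$-th piece $\text{Sym}^k M_1(A)_\Q$ of the rational decomposition of \cite{AHP}, which is pure of weight $k$; therefore $R_\ell(h^k_\et(A))\otimes\Q_\ell$ is concentrated in degree $k$, and by the previous sentence so is $R_\ell(h^k_\et(A))$, where it equals $H^k_\et(A,\Z_\ell)$. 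Thus $\pi^k_A$ lifts the $(k,2g-k)$ Künneth component of $\Delta_A$, so $J(C_1)\times\cdots\times J(C_n)$ admits an integral Chow–Künneth decomposition. (One can instead first run the argument for a single Jacobian and then invoke the stability of Chow–Künneth decompositions under products, the $\ell$-adic Künneth formula for $A$ having no $\mathrm{Tor}$-term since $H^\ast_\et(J(C_i),\Z_\ell)$ is torsion-free.)

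The step I expect to be the main obstacle is the passage from the abstract direct-sum decomposition of Proposition \ref{teoprin} to a genuine system of orthogonal \emph{algebraic} (étale) projectors: this rests on $M_\et(A)$ being an étale Chow motive with $\mathrm{End}(M_\et(A))=\CH^g_\et(A\times A)$, on $\ch_\et(k,\Z)\hookrightarrow\text{DM}_\et(k,\Z)$ being fully faithful and idempotent complete, and on the homotopy-invariant Shapiro computation producing the explicit form of $h^k_\et(A)$; a subsidiary point to pin down is the integral identification $M_1(J(C))\simeq h^1_\et(C)$. Once these are in place, the realisation bookkeeping above is routine, using that the cohomology of a Jacobian is torsion-free.
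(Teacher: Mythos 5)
Your proof is correct and takes essentially the same route as the paper: both reduce to the observation that $M_1(J(C_1)\times\cdots\times J(C_n))\simeq\bigoplus_i h^1_\et(C_i)$ is an étale Chow motive (via Lemma \ref{variab}(3) and additivity of $M_1$), and then invoke pseudo-abelianity/full-faithfulness of $\ch_\et(k)\hookrightarrow\text{DM}_\et(k,\Z)$ together with Theorem \ref{teoprin} to produce orthogonal projectors. The paper's proof stops after identifying $M_1(J(C_1)\times\cdots\times J(C_n))$ with the explicit Chow motive $\bigl(\coprod_i C_i,\sum_i p^1_\et(C_i)\bigr)$ and leans on the immediately preceding proposition and remark (that $M_1(A)\in\ch_\et(k)$ forces an integral CK decomposition for $h_\et(A)$); your version re-derives that criterion inline and additionally spells out the $\ell$-adic Künneth bookkeeping (torsion-freeness of $H^*_\et(A,\Z_\ell)$, detection of the split summands by rationalisation) that the paper treats as implicit in its earlier discussion.
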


Also, this theory gives an improvement and conditions about the existence of projectors and decomposition of integral étale motives:

\begin{theo}
 Let $X$ be a smooth projective variety of dimension $d$ over an algebraically closed field $k$. If $\text{Pic}^0(X)$ is a principally polarized variety, then there exists a decomposition of the motive $h_\et(X)$ as
 \begin{align*}
     h_\et(X)=h_\et^0(X)\oplus h^1_\et(X) \oplus h^+_\et(X) \oplus  h^{2d-1}_\et(X) \oplus h_\et^{2d}(X). 
 \end{align*}
 \end{theo}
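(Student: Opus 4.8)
The plan is to transpose Murre's construction of the projectors $\pi_0,\pi_1,\pi_{2d-1},\pi_{2d}$ into $\mathrm{DM}_\et(k,\Z)$, using in place of the rational Chow--Künneth decomposition of an abelian variety the \emph{integral} decomposition of the étale motive of the Picard variety of $X$ supplied by Proposition \ref{teoprin}, and bringing the principal polarization to bear exactly where the classical recipe is forced to invert an integer. Two facts are used throughout: for $X$ smooth projective the motive $M_\et(X)$ is dualizable in $\mathrm{DM}_\et(k,\Z)$ with $M_\et(X)^\vee\cong M_\et(X)(-d)[-2d]$, so that algebraic cycles and line bundles on products of $X$ with auxiliary varieties define morphisms of étale motives; and, since $k=\bar k$, a morphism between constructible étale motives vanishes as soon as all its $\ell$-adic realizations do.

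First I would fix a closed point $x_0\in X(k)$, which exists as $k=\bar k$. The class of $X\times\{x_0\}$, regarded as an element of $\mathrm{Hom}_{\mathrm{DM}_\et(k,\Z)}(M_\et(X),M_\et(X))$, factors as $M_\et(X)\to\mathbf{1}\xrightarrow{\,x_0\,}M_\et(X)$ and is therefore an idempotent $\pi_0$ with image $\mathbf{1}$; dually the class of $\{x_0\}\times X$ gives an idempotent $\pi_{2d}$ with image $\mathbf{1}(d)[2d]$, and $\pi_0\pi_{2d}=\pi_{2d}\pi_0=0$ for weight reasons. One sets $h^0_\et(X):=(M_\et(X),\pi_0)\cong\mathbf{1}$ and $h^{2d}_\et(X):=(M_\et(X),\pi_{2d})\cong\mathbf{1}(d)[2d]$; these exist with no hypothesis on $X$.

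For the degree-one part, let $A:=\mathrm{Pic}^0_{X/k}$, an abelian variety which by hypothesis is principally polarized; write $\lambda\colon A\xrightarrow{\ \sim\ }\widehat{A}=\mathrm{Alb}(X)$ for the polarization isomorphism and $\alpha:=\lambda^{-1}\circ a_X\colon X\to A$, where $a_X$ is the Albanese morphism normalized at $x_0$. Proposition \ref{teoprin} gives $M_\et(A)\simeq\bigoplus_{i=0}^{2\dim A}\bigl(M_1(A)^{\otimes i}\bigr)^{h\mathfrak{S}_i}$, hence a split projection $p_1\colon M_\et(A)\to M_1(A)$ with a chosen section, and $u_1:=p_1\circ M_\et(\alpha)\colon M_\et(X)\to M_1(A)$ induces the projection onto $H^1$ on $\ell$-adic realizations. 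The essential step is to produce a section $v_1$ of $u_1$: I would extract it from the rigidified Poincaré bundle $\mathcal{P}$ on $X\times A$, whose class lies in $\mathrm{Pic}(X\times A)=H^2_\et(X\times A,\Z(1))=\mathrm{Hom}_{\mathrm{DM}_\et(k,\Z)}\bigl(M_\et(X)\otimes M_\et(A),\mathbf{1}(1)[2]\bigr)$; restricting to the $M_1(A)$-summand in the second factor and dualizing along Poincaré duality on $X$ and the isomorphism $\lambda$ yields a candidate $v_1$, and the composite $u_1\circ v_1$ is then governed by an endomorphism of $A$ which equals $\mathrm{id}_A$ \emph{precisely because} the polarization is principal---a general polarization would give multiplication by a nonzero integer, not invertible over $\Z$. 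Taking $\pi_1:=v_1\circ u_1$, an idempotent endomorphism of $M_\et(X)$, and transposing the whole construction under Poincaré duality on $X$, one obtains a companion idempotent $\pi_{2d-1}$; set $h^1_\et(X):=(M_\et(X),\pi_1)$ and $h^{2d-1}_\et(X):=(M_\et(X),\pi_{2d-1})$.

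Finally I would verify mutual orthogonality of $\pi_0,\pi_1,\pi_{2d-1},\pi_{2d}$: each pairwise composite has vanishing $\ell$-adic realizations for all $\ell\neq\mathrm{char}(k)$, since $H^0,H^1,H^{2d-1},H^{2d}$ of $X$ sit in pairwise distinct cohomological degrees (taking $d\geq2$; for $\dim X\leq1$ the middle pieces coincide and the statement collapses to the classical one), so these composites vanish by conservativity. Then $\pi_+:=\mathrm{id}_{M_\et(X)}-\pi_0-\pi_1-\pi_{2d-1}-\pi_{2d}$ is an idempotent orthogonal to the other four, and $h^+_\et(X):=(M_\et(X),\pi_+)$ completes the decomposition $h_\et(X)=h^0_\et(X)\oplus h^1_\et(X)\oplus h^+_\et(X)\oplus h^{2d-1}_\et(X)\oplus h^{2d}_\et(X)$. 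The main obstacle is the construction of the section $v_1$---equivalently of the idempotent $\pi_1$ over $\Z$ rather than only over $\Q$---which is exactly what the principal-polarization hypothesis makes possible; one must also confirm that the cycle- and line-bundle-theoretic inputs genuinely assemble into morphisms of $\mathrm{DM}_\et(k,\Z)$ whose realizations are the expected ones, so that the classical identities persist integrally.
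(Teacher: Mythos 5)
Your proof follows the same Murre-style strategy as the paper: fix a base point to produce $\pi_0,\pi_{2d}$, then use the Picard/Albanese formalism and the principal polarization to produce an integral idempotent $\pi_1$ (and by duality $\pi_{2d-1}$), and finally take $\pi_+$ as the residual projector. The key leverage point in both proofs is identical — the principal polarization replaces the isogeny of degree $m^2$ that appears in Murre's rational construction (cf.\ \cite[Lemma 6.2.3]{MNP}) by an isomorphism, so the integer $m$ one would otherwise need to invert is already a unit.

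Where you diverge from the paper is in the execution of the middle step. You extract the section $v_1$ of $u_1$ from the rigidified Poincaré bundle combined with the motivic decomposition of $M_\et(\operatorname{Pic}^0(X))$ supplied by Proposition \ref{teoprin}, and you verify $u_1\circ v_1 = \mathrm{id}$ and the mutual orthogonality of the four projectors by conservativity of the family of $\ell$-adic realizations. The paper instead stays at the level of étale Chow groups: it invokes an \emph{integral étale Fourier transform} on $\operatorname{Pic}^0(X)$ and writes down the inverse correspondence explicitly as the cycle $c_1(\mathcal{P}_{\operatorname{Pic}^0(X)})^{2g-1}/(2g-1)!$, transported to $\CH^d_\et(X\times X)$ via the $\ell^n$-torsion identifications of étale cohomology, and then shows $\lambda^{-1}\circ\lambda = \mathrm{id}$ forces $m$ to be invertible in $\CH^g_\et(X\times X)$. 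Your motive-level packaging is cleaner and makes the orthogonality and residual-projector steps explicit (which the paper leaves tacit), at the cost of being somewhat less concrete about \emph{which} integral cycle realizes $v_1$; the paper's cycle-level argument is more explicit but leans on the integrality of the divided-power Poincaré class, which is a genuinely delicate point that your abstract formulation keeps at arm's length rather than confronting. Both proofs ultimately rest on the same unstated assertion — that the Poincaré/Fourier data give genuine morphisms in $\text{DM}_\et(k,\Z)$, not merely in $\text{DM}_\et(k,\Q)$ — so neither is decisively more rigorous than the other.
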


We end the present article saying a few words about smooth conic (or quadric) bundles over a field $k$: the integral Hodge conjecture for 1-cycles holds for smooth quadric bundles $f:X \to \Pro^2_\C$ and for coninc bundles $f:X \to S$ when $S$ is smooth. If $k=\bar{k}$, then we characterize a decomposition of the étale motive of $X$ generalizing the results given in \cite{NS}.

Roughly speaking the structure of the present article is the following: we start with section 2, which is used as an introduction to the main important results and preliminaries about the triangulated category $\text{DM}_\et(S,\Z)$, concerning the different models that we can use. Section 3 we give the definition of an integral étale Chow-Künneth decomposition an present some varieties that full-fill this definition using results of \cite{RS}. Following the same spirit, we present an improved version of Manin's identity principle for étale motives. Finally, in section 5, we proof the main result of this article concerning the decomposition of the étale motive associated to a commutitative group scheme over a base $S$ with integral values and giving conditions for such decomposition to be in $\ch_\et(k)$.

\section*{Acknowledgments}

The author thanks his advisors Fr\'ed\'eric D\'eglise and Johannes Nagel for their suggestions, useful discussions and the time for reading this article. The author also thanks Giuseppe Ancona for the valuable commentaries in his report that he made about the author's thesis, which leads to this paper. This work is supported by the EIPHI Graduate School (contract ANR-17-EURE-0002) and the FEDER/EUR-EiPhi Project EITAG. We thank the French “Investissements d’Avenir” project ISITE-BFC (ANR-15-IDEX-0008) and the French ANR project “HQ-DIAG” (ANR-21-CE40-0015).

\section{Preliminaries}

We recall the definition 
of two models for the category étale of étale motives: the first one $\textbf{DA}_\et(S,\Lambda)$, uses étale sheaves without transfers, and the second one, $\textbf{DM}_\et(S,\Lambda)$, uses sheaves with transfers. For the first model we mainly use the references \cite{ayoub2014} and \cite{Ayo}; for the second one we use \cite{CD16}.

Let $\Lambda$ be a commutative ring which in this context is called the ring of coefficients. We are interested in the cases when $\Lambda= \mathbb{Z}, \ \mathbb{Q}, \ \mathbb{Z}/m$ (we will omit $\Lambda$ in the notation when $\Lambda=\mathbb Z$). 
We fix a noetherian scheme $S$ as our base scheme and we denote $\text{Sch}/S$ and $\text{Sm}/S$ the categories of schemes of finite type and smooth schemes over $S$ respectively. We denote by $\text{Sh}_\et(\text{Sm}/S,\Lambda)$ the category of étale sheaves with values in $\Lambda-$modules. 

For a given object $X$ in $\text{Sm}/S$ we denote by $\Lambda^S_\et(X)$ the étale sheaf associated to the presheaf $U \mapsto \Lambda[\ho_{\text{Sm}/S}(U,X)]$ where $ \Lambda[\ho_{\text{Sm}/S}(U,X)]$ is the free $\Lambda-$module generated by $\ho_{\text{Sm}/S}(U,X)$. 

Consider the derived category of étale sheaves $\mathbf{D}(\text{Shv}_\et(\text{Sm}/S,\Lambda))$ and denote by $\mathcal{L}$ the subcategory of the derived category of étale sheaves that contains the two complexes
\begin{align*}
    \ldots \to 0 \to \Lambda^S_\et(\mathbb{A}^1_U) \to \Lambda_\et(U) \to 0 \to \ldots
\end{align*}
and is closed under arbitrary direct sums. Here $\mathbb{A}^1:=\spc(\mathbb{Z}[t])$ and $U$ is a smooth $S-$scheme the non-zero map is induced by the projection $\mathbb{A}^1_U \to U$.

\begin{defi}
Define $\textbf{DA}^\eff_\et(S,\Lambda)$ as the Verdier quotient of $\mathbf{D}(\text{Sh}_\et(\text{Sm}/S,\Lambda))$ by $\mathcal{L}$. An object in $\textbf{DA}^\eff_\et(S,\Lambda)$ is called an effective motivic sheaf over $S$ with coefficients in $\Lambda$. The motivic sheaf $\Lambda^S_\et(X)$ is called the effective homological motive of $X$ and from now on we will denote it by $M^S_\et(X)$. 

\end{defi}

It is necessary to remark that the category $\textbf{DA}^\eff_\et(S,\Lambda)$ has the same objects of the category $\mathbf{D}(\text{Sh}_\et(\text{Sm}/S,\Lambda))$, the difference lies in the morphisms of the category since every morphism in $\mathbf{D}(\text{Sh}_\et(\text{Sm}/S,\Lambda))$ whose cone is in $\mathcal{L}$ gets inverted in $\textbf{DA}^\eff_\et(S,\Lambda)$. In particular there is an isomorphism $M^S_\et(\mathbb{A}^1_X)\to M^S_\et(X)$ for all $X \in \text{Sm}/S$ induced by $p:\mathbb{A}^1_X \to X$. Another important observation is that $\textbf{DA}^\eff_\et(S,\Lambda)$ inherits the monoidal structure of $\mathbf{D}(\text{Sh}_\et(\text{Sm}/S,\Lambda))$, which at the same time comes from the monoidal structure of $\text{Sh}_\et(\text{Sm}/S,\Lambda)$.

Let $\mathbf{L}$ be the Lefschetz motive defined as the cokernel of the inclusion $\Lambda^S_\et(\infty_S)\hookrightarrow \Lambda^S_\et(\Pro^1_S)$. The next step in the construction of the triangulated category of motivic étale sheaves is to invert the Lefschetz motive for the monoidal structure. The process used to formally invert the Lefschetz motive in \cite{Ayo} is to consider $\mathbf{L}-$spectra for the tensor product.

\begin{defi}
An $\mathbf{L}-$spectrum of étale sheaves on $\text{Sm}/S$ is a collection of étale sheaves
\begin{align*}
    E=(E_n,\gamma_n)_{n \in \mathbb{N}}
\end{align*}
where $\gamma_n:\mathbf{L} \otimes E_n \to E_{n+1}$ is a morphism of sheaves called the n-th assembly map. We call the sheaf $E_n$ the n-th level of the $\mathbf{L}-$spectrum $E$.
\end{defi}
A morphism of $\mathbf{L}-$spectra $f:E\to E'$ is a collection of morphism of sheaves $f=(f_n)_{n \in \mathbb{N}}$, where $f_n:E_n \to E_n'$ such that the diagram
 \[
  \begin{tikzcd}
\mathbf{L} \otimes E_n \arrow{r}{\text{id}\otimes f_n} \arrow{d}{\gamma_n}& \mathbf{L} \otimes E_n' \arrow{d}{\gamma'_n} \\
 E_{n+1} \arrow{r}{ f_{n+1}} &  E_{n+1}'
  \end{tikzcd}
\]
commutes for all $n \in \mathbb{N}$. We denote by $\textbf{Spt}_{\mathbf{L}}(\text{Sh}_\et(\text{Sm}/S,\Lambda))$ the category of $\mathbf{L}$-spectra.
Consider an $\mathbf{L}-$spectrum $E$. The evaluation functor $\text{Ev}_p:\textbf{Spt}_{\mathbf{L}}(\text{Sh}_\et(\text{Sm}/S,\Lambda)) \to\text{Sh}_\et(\text{Sm}/S,\Lambda) $  admits a left adjoint $\text{Sus}^p_\mathbf{L}$ given by 
\begin{align*}
    \text{Sus}^p_\mathbf{L}(K)=(\overbrace{0,\ldots,0}^{p-1\ \text{times}},K,\mathbf{L}\otimes K, \mathbf{L}^{\otimes 2}\otimes K, \ldots, )
\end{align*}

When $p=0$ the suspension functor is called the infinite suspension functor and it is denoted by $\Sigma^\infty_\mathbf{L}$. Finally, we define $\textbf{DA}^\et(S,\Lambda)$ as the Verdier quotient  of the category $\mathbf{D}(\textbf{Spt}_\mathbf{L}(\text{Shv}_\et(\text{Sm}/S,\Lambda)))$ by the smallest triangulated subcategory  $\mathcal{L}_{st}$ closed by arbitrary sums and containing the complexes
\begin{align*}
    \ldots \to 0 \to \text{Sus}^p_\mathbf{L} \Lambda^S_\et(\mathbb{A}^1_U) \to  &\text{Sus}^p_\mathbf{L}\Lambda^S_\et(U) \to 0 \to \ldots \\
    \ldots \to 0 \to \text{Sus}^{p+1}_\mathbf{L} (\mathbf{L}\otimes \Lambda^S_\et(U))& \to  \text{Sus}^p_\mathbf{L} \Lambda^S_\et(U) \to 0 \to \ldots 
\end{align*}
for all $U \in \text{Sm}/S$ and all $p \in \mathbb{N}$.

\begin{defi}
The objects in the category $\textbf{DA}^\et(S,\Lambda)$ are called motivic étale sheaves over $S$. Given a smooth $S-$scheme $X$, then $\Sigma^\infty_\mathbf{L} \Lambda^S_\et(X)$ is called the homological motive of $X$ and will be denoted by $M^S_\et(X)$. We denote $\textbf{DA}_{ct}^\et(S,\Lambda)$ the smallest triangulated subcategory of $\textbf{DA}^\et(S,\Lambda)$ closed under direct summands and containing the motives $M^S_\et(X)(-p)[-2p]:=\text{Sus}^p_\mathbf{L}\Lambda^S_\et(X)$ for $p \in \mathbb{N}$ and $X$ an $S-$scheme of finite presentation. Those motivic sheaves are called constructible.
\end{defi}

The category $\text{DM}_\et(S,\Lambda)$ is constructed in a similar way as $\textbf{DA}^\et(S,\Lambda)$, but instead of considering the whole category $\text{Shv}_\et(\text{Sm}/S,\Lambda)$ we consider the category of étale sheaves with transfers, i.e. sheaves that come from an étale presheaf that is an additive contravariant functor.

Denote as $\textbf{SmCor}(S,\Lambda)$ the category of smooth correspondences over $S$ with coefficients in $\Lambda$. The objects are the same ones of $\text{Sm}/S$, and for $U,V \in \text{Sm}/S$ the morphisms are finite $\Lambda$-correspondences from $U\to V$. Let $\textbf{Sh}_\et(\textbf{SmCor}(S,\Lambda))$ be the category of additive presheaves of commutative groups on $\textbf{SmCor}(S,\Lambda)$ whose restriction to $\text{Sm}/S$ is an étale sheaf. We call this the category of étale shaves with transfers. According to \cite[Corollary 2.1.12]{CD16} there is an adjoint pair of functors
\begin{align*}
     \gamma^*:\text{Sh}_\et(\text{Sm}/S,\Lambda)\rightleftarrows \textbf{Sh}_\et(\textbf{SmCor}(S,\Lambda)) : \gamma_*.
\end{align*}
Let $X$ be a smooth $S$-scheme. We denote by $\Lambda^{tr}(X)$ the complex of sheaves given by $c(-,X)$ the finite correspondences and let $\underline{X}$ be the sheaf  associated to $X$ defined by the presheaf
\begin{align*}
    U \mapsto \underline{X}(U)= \Lambda\text{Hom}_{\text{Sm}/S}(U,X).
\end{align*}
of commutative groups. The functor $\gamma_*$ forgets transfers and 
$\gamma^* (\underline{X})=\Z^{tr}(X)$. We continue by considering the derived category 
$D(\textbf{Sh}_\et(\textbf{SmCor}(S,\Lambda)))$ as $\textbf{Sh}_\et(\textbf{SmCor}(S,\Lambda))$ is an abelian category. After that we take the $\mathbb{A}^1$-localization of the derived category $D(\textbf{Sh}_\et(\textbf{SmCor}(S,\Lambda)))$, giving us the triangulated category of effective étale motives $\text{DM}_\et^{\text{eff}}(S,\Lambda)$. Finally, to this category we can associate a stable $\mathbb{A}^1$-derived category $\text{DM}_\et(S,\Lambda)$, the category of triangulated étale motives, by $\otimes$-inverting the Tate object $\Lambda^{\text{tr}}_S(1):=\Lambda^{\text{tr}}_S(\Pro_S^1,\infty)[-2]$. This can be obtained by applying the functor $\Sigma^\infty$.

The functor $\gamma^*$ is a left Quillen functor, thus we have its derived version
\begin{align*}
     L\gamma^*:D(\text{Sh}_\et(\text{Sm}/S,\Lambda))\rightleftarrows D(\textbf{Sh}_\et(\textbf{SmCor}(S,\Lambda))) : \gamma_*.
\end{align*}
which preserves $\mathbb{A}^1$-equivalences. With this we obtain an adjunction in the following way 
\begin{align*}
    L\gamma^*  :\textbf{DA}^\et(S,\Lambda) \rightleftarrows \text{DM}_\et(S,\Lambda): R\gamma_*
\end{align*}
If $S$ is a noetherian scheme of finite dimension, notice that by \cite[Théorème B.1]{Ayo} and \cite[Remark 5.5.9]{CD16}, the categories above mentioned are equivalent. In the context of Voevodsky motives the constructible (compact) objects are called the geometrical motives; the corresponding category is denoted by $\text{DM}^{gm}_\et(S,\Lambda)$. Also by \cite[Corollary 5.5.5]{CD16} for a quasi-excellent geometrically unibranch noetherian scheme of finite dimension $S$ the adjunctions
\begin{align*}
    \mathbf{L}\psi_! : \text{DM}_\et(S,R) \rightleftarrows \text{DM}_h(S,R):\mathbf{R}\psi^*
\end{align*}
give us an equivalence of monoidal triangulated categories.

Among the properties that we have to mention about the different models for the triangulated category of étale motives, is the one concerning conservative functors. Let $\mathfrak{TC}$ be the 2-category of triangulated categories. According to \cite[Théorème 3.9]{Ayo}, for a commutative ring $\Lambda$ the homotopic stable 2-functor $\textbf{DA}^\et(-,\Lambda): \text{Sch}/S \to \mathfrak{TC}$ is separated, this means that for any $S$-morphism $f:X \to Y$, the induced functor $f^*$ is conservative. Fixing a field $k$, let us consider a field extension $K/k$, and the induced map $p:\spc(K)\to \spc(k)$. Since $p$ is a surjective $k$-morphism, we have that 
\begin{align*}
    p^*:\textbf{DA}^\et(k,\Lambda) \to \textbf{DA}^\et(K,\Lambda)
\end{align*}
is conservative. Using the equivalence of categories $\textbf{DA}^\et(k,\Lambda) \simeq \text{DM}_\et(k,\Lambda)$ (and the same for $K$), we obtain that $p^*:\text{DM}_\et(k,\Lambda)\to \text{DM}_\et(K,\Lambda)$ is also conservative. The next examples of a conservative family: according to \cite[Proposition 3.24]{Ayo}, for a scheme $S$ such that the cohomological dimension of the residue fields is bounded, the family of functors $x^*: \textbf{DA}^\et(S,\Lambda) \to \textbf{DA}^\et(x,\Lambda)$ ,for $x \in S$, is conservative.

Now consider a noetherian scheme $S$, by \cite[Lemma A.6]{AHP}, a motive $M \in \textbf{DA}^\et(S,\Q)$ is zero if and only if the pullback to any geometric point $i_{\bar{s}}:\bar{s}\to S$ is zero. Even more, a morphism $f \in \textbf{DA}^\et(S,\Q)$ is an isomorphism if and only if $i^*_{\bar{s}}(f)$ is an isomorphism for any geometric point $\bar{s}$. This theorem can be extended to $\Z$-coefficients by imposing more restrictions on the base $S$. In order to do this, let us recall some definitions given in \cite[Définition 3.12]{Ayo}: for a prime number $p$, we define the \textit{punctual p-cohomological dimension} of a scheme $S$ as $\text{pcd}_p(S)=\sup_{s \in S}\left\{\text{cd}_p(\kappa(s))\right\} \in \N \cup \{\infty\}$, where 
$\kappa(s)$ is the residue field of a point $s \in S$.
\begin{defi}\label{gooden}
    Let $S$ be a scheme. We say that $S$ is good enough for this purposes if it has finite Krull dimension and the punctual p-cohomological dimension is bounded for every prime $p$.
\end{defi}

We can now move on to the following lemma, by mimicking the proof given for \cite[Lemma A.6.]{AHP}:

\begin{lemma}\label{consCD}
  Let $S$ be a good enough scheme. Then the following holds:
  \begin{enumerate}
      \item Let $M \in \textbf{DA}^\et(S,\Z)$ be a motive. Then $M$ is zero if and only if the pullback $i^*_{\bar{s}}M$ to any geometric point $\bar{s} \to S$ is zero. 
      \item Let $f$ be a morphism in $\textbf{DA}^\et(S,\Z)$. Then $f$ is an isomorphism if and only if the pullback $i^*_{\bar{s}}(f)$ is an isomorphism for any geometric point   $\bar{s} \to S$.
  \end{enumerate}
\end{lemma}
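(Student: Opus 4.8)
The plan is to mimic the rational proof of \cite[Lemma A.6]{AHP}, but replacing the use of rational coefficients by a reduction to torsion and rational coefficients separately, which is where the hypothesis that $S$ is good enough enters. First I would reduce (2) to (1): a morphism $f\colon M\to N$ is an isomorphism if and only if its cone $\operatorname{cone}(f)$ is zero, and pullback $i^*_{\bar s}$ is exact (triangulated) so it commutes with cones; thus (2) follows from (1) applied to $\operatorname{cone}(f)$. So the whole content is statement (1). The ``only if'' direction is trivial since $i^*_{\bar s}$ is a functor. For the ``if'' direction, suppose $i^*_{\bar s}M = 0$ for every geometric point $\bar s \to S$.

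Next I would use the already-recalled fact (from \cite[Proposition 3.24]{Ayo}) that on a scheme whose residue fields have bounded cohomological dimension, the family $x^*\colon \textbf{DA}^\et(S,\Z)\to\textbf{DA}^\et(x,\Z)$, $x\in S$, is conservative. Since $S$ is good enough, $\operatorname{pcd}_p(S)<\infty$ for all $p$ and $\dim S<\infty$, so this conservativity applies and it suffices to show $x^*M = 0$ for each point $x\in S$. Now fix $x\in S$ with residue field $\kappa(x)$, and let $\bar x \to \spc\kappa(x)$ be a separable closure, giving a geometric point of $S$. The composite $\bar x \to \spc\kappa(x)\to S$ is the geometric point, and by hypothesis $i^*_{\bar x}M = 0$, i.e. the pullback of $x^*M\in\textbf{DA}^\et(\kappa(x),\Z)$ along $\spc\overline{\kappa(x)}\to\spc\kappa(x)$ vanishes. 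But as recalled in the Preliminaries, for a field extension $K/k$ the pullback $p^*\colon \textbf{DA}^\et(k,\Z)\to\textbf{DA}^\et(K,\Z)$ is conservative (because $p$ is a surjective $k$-morphism and $\textbf{DA}^\et(-,\Lambda)$ is separated by \cite[Théorème 3.9]{Ayo}). Applying this with $k=\kappa(x)$ and $K = \overline{\kappa(x)}$, we conclude $x^*M = 0$. Since this holds for all $x\in S$, conservativity of the family $\{x^*\}_{x\in S}$ gives $M = 0$.

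The one subtlety I would want to be careful about is that \cite[Théorème 3.9]{Ayo} and \cite[Proposition 3.24]{Ayo} are stated for $\textbf{DA}^\et$ with a general coefficient ring $\Lambda$, so in particular for $\Lambda = \Z$, and that no rationalization is secretly needed: the point of the good-enough hypothesis is precisely to guarantee that the residue fields of $S$ have bounded $p$-cohomological dimension for every $p$, which is exactly the condition under which \cite[Proposition 3.24]{Ayo} gives a conservative family with integral coefficients. I expect the main (really the only) obstacle to be checking that these separatedness and conservativity statements from \cite{Ayo} indeed hold integrally under Definition \ref{gooden} rather than only rationally; once that is in place the argument is a formal diagram of conservative pullbacks. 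Everything else — the reduction of (2) to (1) via cones, and the ``only if'' directions — is routine.
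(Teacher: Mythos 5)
Your reduction of (2) to (1) via cones and your use of \cite[Proposition 3.24]{Ayo} to reduce to $S=\spc(k)$ are exactly what the paper does. The divergence, and the gap, is in the final step: you invoke the conservativity of $p^*\colon\textbf{DA}^\et(\kappa(x),\Z)\to\textbf{DA}^\et(\overline{\kappa(x)},\Z)$ directly, citing the passage in the Preliminaries that derives it from \cite[Théorème 3.9]{Ayo}. But Théorème 3.9 asserts separatedness of the 2-functor $\textbf{DA}^\et(-,\Lambda)$ on $\mathrm{Sch}/S$, i.e.\ for \emph{finite-type} $S$-morphisms, so it applies to a field extension $\spc(K)\to\spc(k)$ only when $K/k$ is finite. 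The extension $\overline{\kappa(x)}/\kappa(x)$ is in general infinite, which is precisely the situation you need. The Preliminaries passage is stated loosely (it does not restrict to finite extensions), and the paper's own proof of this lemma does \emph{not} treat it as a black box: instead it supplies the missing bridge. Concretely, the paper uses that $\textbf{DA}^\et(k,\Z)$ is compactly generated to reduce to showing morphisms $f\colon C\to N$ from compact $C$ vanish, then invokes continuity (\cite[Lemme 3.4]{Ayo}) to see that vanishing of $i^*f$ over the algebraic closure forces vanishing of the pullback of $f$ over some \emph{finite} subextension $K/k$, and only then applies Théorème 3.9 to conclude. By citing the unrestricted preliminary statement, you have shifted the entire difficulty into a claim that is itself in need of the very argument the paper provides; as written, your proof is essentially circular at this point.

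A secondary omission: you pass to a separable closure $\bar x\to\spc\kappa(x)$, but if geometric points are taken with algebraically closed residue fields then over a non-perfect $\kappa(x)$ the separable and algebraic closures differ. The paper addresses this by first reducing to a perfect base via the fact that pullback along a purely inseparable extension is an equivalence (\cite[Proposition 6.3.16]{CD16}); your argument should either include this step or make explicit that the convention for geometric points used here is separably closed. Your closing remark correctly flags that one must check the integral validity of \cite[Théorème 3.9]{Ayo} and \cite[Proposition 3.24]{Ayo}, but the real obstacle is not the choice of coefficients (those statements are indeed coefficient-agnostic once the cohomological-dimension hypothesis is in place) — it is the infinite-versus-finite-extension issue, which you do not identify.
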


\begin{proof}
This follows from arguments given in \cite{AHP}. By \cite[Proposition 3.24]{Ayo} we can assume that $S=\spc(k)$ with $k$ a field. Assuming that $k$ is perfect, consider an algebraic closure $\bar{k}$. Let $N \in \textbf{DA}^\et(k,\Z)$ be a motive such that the pullback $i^*N$, with $i:\bar{k} \to k$, vanishes. Under the assumptions on $S$, 
$\textbf{DA}^\et(S,\Z)$ is compactly generated. Therefore we have to prove that all morphism $f: C \to N$ with $C$ compact vanish. Using the assumptions, $i^*(f)$ vanishes, ans according to  \cite[Lemme 3.4]{Ayo},  there exists a finite extension $K/k$ such that the pullback of $f$ vanishes.  By \cite[Théorème 3.9]{Ayo} the functor $i^*_K: \textbf{DA}^\et(k,\Z) \to  \textbf{DA}^\et(K,\Z)$ is conservative, therefore $f$ vanishes. When $k$ is not perfect, we consider a purely inseparable extension $k^i$ and note that the pullback functor $\textbf{DA}^\et(k,\Z) \to  \textbf{DA}^\et(k^i,\Z)$ is an equivalence of categories, by \cite[Proposition 6.3.16]{CD16}. The second statement follows from the first one.
\end{proof}

\section{\'Etale decomposition}

We present some applications of \cite[Theorem 1.1]{RS} to the integral decomposition of étale Chow motives. The simplest case is the one described in \cite[Appendix C]{MNP} for varieties without transcendental cohomology classes in degrees different from the dimension. For that, we give the following definition of an integral Chow-Künneth decomposition:

\begin{defi}
Let $k$ be a field and let $f:X\to k$ be a smooth projective variety, of dimension $d$. We say that $h_\et(X)$ admits an integral Chow-Künneth decomposition in $\ch_\et(k)$ if:
\begin{itemize}
    \item $h(X)$ admits a rational Chow-Künneth decomposition, see \cite[Definition 6.1.1]{MNP}, 
\begin{align*}
    h(X)\xrightarrow{\simeq }\bigoplus_{i=0}^{2d} h^i(X) \in \ch(k)_\Q,
\end{align*}
and this map is induced by a morphism $g: h_\et(X)\to M=(Y,p)$ in $\ch_\et(k)$.

\item  Consider the base change to the algebraic closure $\bar{g}:h_\et(X_{\bar{k}})\to M_{\bar{k}}$. For every prime number $\ell \neq \text{char}(k)$, the induced map $\rho_\ell(\bar{g}):R\bar{f}_*(\Z/\ell)\to M_{\bar{k}}/\ell \in D(\bar{k}_\et,\Z/\ell)$ is an isomorphism and $\rho_\ell(\bar{p})=p_1+\ldots+p_{2d}$ with the following conditions
\begin{align*}
    p_i\circ p_j =\begin{cases}
        p_i \text{ if }i=j\\
        \ 0 \text{ if }i\neq j,
    \end{cases}\quad 
    \rho(\bar{g})^{-1}\circ p_i (M_{\bar{k}}/\ell)=R^i\bar{f}_*(\Z/\ell) \text{ for all } i.
\end{align*}

\end{itemize}
\end{defi}

This is nothing but a direct translation of the conservativity properties of the family of functors associated to the change of coefficients in \cite[Proposition 5.4.12]{CD16} and combining the results about conservativity \cite[Théorème 3.9]{Ayo} and continuity \cite[Proposition 6.3.7]{CD16} applied to $\bar{k}=\varprojlim k_i$ where $k_i$ runs over the finite fields extensions of $k$.

\begin{prop}
    Consider a field $k$ of finite cohomological dimension and let $h_\et(X) \in \ch_\et(k)$. Then $h_\et(X_{\bar{k}})$ has an integral Chow-Künneth decomposition if and only if there exists a field extension $K/k$ such that $h_\et(X_K)$ has an integral Chow-Künneth decomposition.
\end{prop}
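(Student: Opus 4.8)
The plan is to use the conservativity of the base-change functor along a field extension $K/k$ and the fact that an integral Chow-Künneth decomposition, in the sense of the definition above, is a purely ``geometric plus $\ell$-adic'' datum which is insensitive to enlarging the base field to the respective algebraic closures. One direction is essentially trivial: if $h_\et(X_{\bar k})$ has such a decomposition, take $K=\bar k$, which is itself a field extension of $k$, and then $h_\et(X_K)=h_\et(X_{\bar k})$ has the decomposition; but more to the point, if $h_\et(X_{\bar k})$ already has the decomposition then so does $h_\et(X_{\bar K})$ for \emph{any} $K$, since $\overline{K}$ contains an algebraic closure of $k$ and the decomposition is obtained from data over $\bar k$ by a further base change, under which rational Chow-Künneth decompositions and $\ell$-adic realizations are preserved. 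So the real content is the converse.

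For the converse, suppose $K/k$ is a field extension such that $h_\et(X_K)$ admits an integral Chow-Künneth decomposition. First I would reduce to the case where $k$ is perfect: by \cite[Proposition 6.3.16]{CD16} pullback along a purely inseparable extension is an equivalence of categories, so we may replace $k$ by its perfect closure (and $K$ accordingly). Next, observe that the base change $\bar K / \bar k$ — where $\bar k$ is an algebraic closure of $k$ inside $\bar K$ — is an extension of algebraically closed fields. The datum of the decomposition of $h_\et(X_K)$ lives, after passing to $\bar K$, as a morphism $g_{\bar K}: h_\et(X_{\bar K}) \to M_{\bar K}$ together with $\ell$-adic projectors; I claim this descends to $\bar k$. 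Here I would invoke the continuity property \cite[Proposition 6.3.7]{CD16}: writing $\bar K = \varinjlim K_i$ as a filtered colimit of finitely generated field extensions of $\bar k$, the category $\ch_\et(\bar k)$ (more precisely the relevant $\ho$-groups of motives and of $\ell$-adic realizations) is the colimit of the corresponding categories over the $K_i$. Since $X$ and the putative summand $M$ are already defined over $k$ (hence over $\bar k$), and the Chow groups computing the projectors and the morphism $g$ are finitely generated information, the morphism $g_{\bar K}$ and the identities $p_i\circ p_j=\delta_{ij}p_i$ are detected at a finite level $K_i$, and then by a specialization/spreading-out argument descend to $\bar k$ itself. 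Equivalently and more cleanly: the functor $\ch_\et(\bar k)\to \ch_\et(\bar K)$ is fully faithful on morphisms between the objects in question and reflects isomorphisms on the relevant $\ell$-adic realizations, because $R\bar f_*(\Z/\ell)$ and its cohomology sheaves are pulled back from $\bar k$ and étale cohomology with finite coefficients is invariant under extension of algebraically closed base fields (smooth–proper base change).

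Once the decomposition datum is available over $\bar k$, it remains to check it satisfies the two bullet points of the definition relative to $k$. The $\ell$-adic condition is by construction a condition over $\bar k$, so it is automatic. For the first bullet, I need a \emph{rational} Chow-Künneth decomposition of $h(X)$ over $k$ together with a morphism $g: h_\et(X)\to M$ in $\ch_\et(k)$ inducing it; here I would use that the rational statement is already known to descend along field extensions — a rational Chow-Künneth decomposition of $X_K$ descends to $X$ by the standard argument (e.g.\ \cite[Section 6.2]{MNP}) using that $\CH^*(X_K)_\Q = \varinjlim \CH^*(X_{k'})_\Q$ over finite subextensions and a trace/norm argument, and similarly the integral morphism $g$ spreads out. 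Finally, the candidate summand $M=(Y,p)$ over $k$ is obtained by descending $Y$ and the idempotent $p$; $p$ is an idempotent in $\ho_{\ch_\et(k)}$ which can be verified after base change to $\bar k$ using Lemma \ref{consCD} applied to $S=\spc(k)$, since $k$ has finite cohomological dimension so $\spc(k)$ is good enough in the sense of Definition \ref{gooden}.

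The main obstacle I anticipate is the descent of the \emph{integral} morphism $g$ and the idempotent $p$ from $K$ (or $\bar K$) down to $k$: unlike the rational case, one cannot divide by the degree of a finite extension, so the usual transfer argument is unavailable, and one must instead argue that the relevant $\ho$-groups in $\ch_\et$ are genuinely unchanged under the extension. This is where the precise form of continuity \cite[Proposition 6.3.7]{CD16} together with conservativity \cite[Théorème 3.9]{Ayo} does the work — the point being that $X$ and $M$ are fixed objects defined over $k$, so $\ho_{\ch_\et(k)}(h_\et(X), M)$ already ``sees'' everything after base change, at least after passing to algebraic closures where étale realization with $\Z/\ell$ coefficients is invariant. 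Making this last invariance fully rigorous, i.e.\ promoting ``the realization is an isomorphism over $\bar K$'' to ``it is an isomorphism over $\bar k$'', is the technical heart, and it reduces cleanly to smooth-proper base change in étale cohomology plus Lemma \ref{consCD}.
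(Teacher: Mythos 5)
You take the same overall route as the paper: handle the rational part via descent for rational Chow--K\"unneth decompositions (the paper cites \cite[Proposition 1.5]{Via}) and the torsion part via the invariance of $D(-_\et,\Z/\ell)$ under extension of algebraically closed fields (the paper invokes the equivalence $s^*:D(\bar k_\et,\Z/\ell)\to D(\bar K_\et,\Z/\ell)$, which is precisely your smooth--proper base-change observation). Your extra detail on continuity and spreading out is a more explicit version of what the paper leaves implicit in that equivalence.

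However, you misread the target. The proposition asks for an integral Chow--K\"unneth decomposition of $h_\et(X_{\bar k})$ over $\bar k$, not of $h_\et(X)$ over $k$. Your sentence ``for the first bullet, I need a rational Chow--K\"unneth decomposition of $h(X)$ over $k$ together with a morphism $g: h_\et(X)\to M$ in $\ch_\et(k)$ inducing it'' aims at a strictly stronger, and in fact problematic, statement: integrally one cannot descend $g$ or the idempotent $p$ from $K$ to $k$ by a transfer argument, exactly the obstacle you flag at the end. What the statement actually requires is only descent from $\bar K$ to $\bar k$, an extension of algebraically closed fields, where your continuity and specialization argument does work and amounts to the same thing as the paper's appeal to the equivalence $s^*$. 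You should also adopt the paper's opening reduction: up to twisting by a Lefschetz motive one may take $M=(X,p)$, so the underlying variety of the summand is already defined over $k$ and only the correspondence $p$ and the morphism $g$ need to be produced over $\bar k$. With the target corrected to $\bar k$ and that reduction in place, your proposal is essentially the paper's argument written out in greater detail.
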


\begin{proof}
For simplicity, up to tensoring with Lefschetz motive, which is a direct summand of a geometric motive, we may assume that $M=(X,p)$. If  $M_{\bar{k}}$ has an integral Chow-Künneth decomposition then the result is trivial. 

Conversely, assume that there exists a Chow-Künneth decomposition for some field extension $K/k$. For the rational part we invoke \cite[Proposition 1.5]{Via}. For the torsion part, let $\ell \neq \text{char}(k)$ be a prime number and consider the field extension $K/k$. Consider the morphism of $s:\spc(\bar{K})\to \spc(\bar{k})$. Let us assume that $h_\et(X_K)$ has a Chow-Künneth decomposition, thus we have that $\rho_\ell(\bar{g_K}): R\bar{f}_K(\Z/\ell) \to M_{\bar{K}}/\ell$ with the above properties for $\rho_\ell(\bar{p}_{K})$. The induced functor $s^*:D(\bar{k}_\et,\Z/\ell) \to D(\bar{K}_\et,\Z/\ell)$ is an equivalence of categories, hence $\rho_\ell(\bar{g}):R\bar{f}_*(\Z/\ell)\to M_{\bar{k}}/\ell \in D(\bar{k}_\et,\Z/\ell)$  is an isomorphism, thus we have the same results for $\rho_\ell(\bar{p})$ and conclude the proof. 

\end{proof}

Let us comeback to the decomposition of étale Chow-Künneth decomposition of complex varieties:

\begin{prop}\label{PropM}
Fixing $k=\C$, let $X$ be a smooth projective complex variety of dimension $d$ such that the groups $H^i_B(X, \Q)$ are algebraic for all $i \neq d$. Then $h_\et(X)$ admits an integral Chow-Künneth decomposition in $\ch_\et(\C)$.
\end{prop}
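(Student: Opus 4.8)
The plan is to reduce the statement to the known structure of the rational Chow–Künneth decomposition for varieties whose cohomology is algebraic outside the middle degree, and then check that the projectors can be lifted integrally along the étale realization with $\mathbb Z/\ell$-coefficients. First I would recall that since $H^i_B(X,\mathbb Q)$ is algebraic for all $i\neq d$, the variety $X$ admits a rational Chow–Künneth decomposition $h(X)\simeq\bigoplus_{i=0}^{2d}h^i(X)$ in which, for $i\neq d$, each $h^i(X)$ is a direct sum of Tate twists of the unit motive: concretely $h^{2j}(X)=\mathbb 1(-j)^{\oplus b_{2j}}$ for $j\neq d/2$ and $h^{2j+1}(X)=0$, because algebraicity of the Betti classes produces idempotent correspondences supported on products of cycles. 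This is exactly the case treated in \cite[Appendix C]{MNP}; the middle piece $h^d(X)$ absorbs everything not of this form. I would then use \cite[Theorem 1.1]{RS} to realize these rational projectors by actual morphisms $g\colon h_\et(X)\to M$ in $\ch_\et(k)$ with $M=(Y,p)$, so the first bullet of the definition is satisfied by construction.

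Next I would pass to the algebraic closure and analyze the $\ell$-adic (mod $\ell$) realization $\rho_\ell$. Base change to $\bar k=\C$ does nothing here since $k=\mathbb C$ already, so write $\bar g = g$, $\bar f = f$. The point is that for each $i\neq d$ the summand $h^i_\et(X)$ is, after realization, a sum of copies of $\mathbb Z/\ell(-j)[-2j]$ (for $i=2j$) and vanishes for $i$ odd; the corresponding projector $p_i$ in $D(\C_\et,\mathbb Z/\ell)$ is the one cutting out $R^i f_*(\mathbb Z/\ell)$, which is a constant sheaf of the correct rank precisely because the Betti classes generating it are algebraic, hence defined integrally by cycle classes and not merely rationally. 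The remaining projector $p_d = \mathrm{id} - \sum_{i\neq d} p_i$ then automatically cuts out $R^d f_*(\mathbb Z/\ell)$. The orthogonality relations $p_i\circ p_j=\delta_{ij}p_i$ hold in $D(\C_\et,\mathbb Z/\ell)$ because they already hold for the lifted correspondences in $\ch_\et(\C)$ — the realization functor is monoidal and compatible with composition of correspondences — or, failing that, because distinct graded pieces of a constant sheaf are orthogonal for degree reasons. Finally $\rho_\ell(\bar g)$ is an isomorphism since it is an isomorphism on each graded piece: on the $i\neq d$ pieces by algebraicity, and on the middle piece by subtraction.

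The main obstacle I anticipate is ensuring that the $\mathbb Z/\ell$-realization of the cycle-theoretic projectors $p_i$ genuinely lands on the integral subsheaf $R^i f_*(\mathbb Z/\ell)$ with the correct idempotent behavior, rather than only rationally: one must know that the algebraic classes in $H^i_B(X,\mathbb Q)$ for $i\neq d$ come from cycles whose reduction mod $\ell$ behaves well, i.e. that the relevant cycle class maps are compatible with reduction mod $\ell$ and that the resulting integral classes still form a basis (up to the ambiguity of torsion, which for $i\neq d$ is controlled since $H^i$ is algebraic). This is precisely where \cite[Theorem 1.1]{RS} does the work, and where one invokes the conservativity/continuity package (\cite[Théorème 3.9]{Ayo}, \cite[Proposition 6.3.7]{CD16}, \cite[Proposition 5.4.12]{CD16}) quoted after the definition: the statement of an integral Chow–Künneth decomposition is designed so that verifying it amounts exactly to checking these two bullets, and the content of \cite{RS} is that the rational decomposition together with control of the torsion in realization suffices to lift. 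I would therefore organize the proof as: (1) produce the rational decomposition of the special type; (2) lift to $\ch_\et(\C)$ via \cite{RS}; (3) compute $\rho_\ell$ on each summand and verify the two conditions; with step (3), and specifically the orthogonality and isomorphism claims mod $\ell$, being the place requiring care.
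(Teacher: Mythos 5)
Your overall strategy is in the right spirit (exploit algebraicity outside the middle degree, produce integral étale projectors using \cite{RS}, then check the mod-$\ell$ realization), but the proposal has two real gaps where the paper's proof does essential work that you only gesture at.

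First, you treat the integral projectors as obtained by ``lifting the rational projectors via \cite{RS},'' but \cite[Theorem~1.1]{RS} does not hand you a lift of a rational Chow--K\"unneth projector to an integral \'etale correspondence. What it gives is (a) surjectivity of the Lichtenbaum cycle class map on torsion, and (b) the equivalence between algebraicity of $H^{2i}_B(X,\mathbb Q)$ and L-algebraicity of $H^{2i}_B(X,\mathbb Z)$. The paper does not lift rational projectors: it \emph{constructs} the integral ones from scratch. Using Poincar\'e duality on the torsion-free part, one chooses an integral basis $\{e^{2i}_j\}$ and its dual basis $\{\hat e^{2(d-i)}_l\}$ with $e^{2i}_j\cup\hat e^{2(d-i)}_l=\delta_{jl}$, invokes L-algebraicity to lift these to Lichtenbaum cycles $\alpha^i_j$, $\hat\alpha^{d-i}_l$ satisfying the same intersection relations, and then sets $p_{2i,j}=\alpha^i_j\times\hat\alpha^{d-i}_j$. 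Orthogonality is then automatic from the duality of the chosen bases, not something that has to be re-verified after passing to $\mathbb Z/\ell$. Your phrase ``idempotent correspondences supported on products of cycles'' conceals exactly this construction and the role of the dual basis.

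Second, and more seriously, you do not handle the torsion difference $\Delta-\sum_i p_i(X)$. Once the torsion-free projectors are built, the diagonal class in $\mathrm{CH}^d_L(X\times X)$ differs from their sum by a torsion class $\Delta_{\mathrm{tors}}$, and one must show this torsion class is itself an \'etale cycle, i.e. lies in $\mathrm{CH}^d_L(X\times X)_{\mathrm{tors}}$, with the correct Betti image. The paper does this by observing that under the hypothesis the odd Betti groups $H^{2j+1}_B(X,\mathbb Z)$ are torsion, hence by the K\"unneth formula the odd Betti groups of $X\times X$ are torsion, so the intermediate Jacobians $J^{k+1}(X\times X)$ vanish; combining \cite[Theorem~1.1.b]{RS} with \cite[Proposition~3.1.5]{roso23} then yields an isomorphism $\mathrm{CH}^k_L(X\times X)_{\mathrm{tors}}\xrightarrow{\simeq}H^{2k}_B(X\times X,\mathbb Z)_{\mathrm{tors}}$, in particular in degree $k=d$, which allows one to lift $\Delta_{\mathrm{tors}}$ uniquely. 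Your paragraph about ``the ambiguity of torsion, which for $i\neq d$ is controlled'' does not engage with this step: the torsion ambiguity lives in degree $d$ (it is a class in $\mathrm{CH}^d_L(X\times X)_{\mathrm{tors}}$), and without the vanishing of intermediate Jacobians of $X\times X$ you cannot conclude that the lift exists or is unique. This is the genuinely new ingredient the paper needs, and it is absent from the proposal.
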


\begin{proof}
We will use the equivalence given in \cite[Theorem 1.1]{RS} and \cite[Appendix C]{MNP}. Let us start by saying that according to \cite[Theorem 1.1.a]{RS} the map Lichtenbaum cycle class map
$$c_L^{m,n}:H^m_L(X,\Z(n)) \to H^m_B(X,\Z(n))$$
restricted to the torsion subgroup $H^m_L(X,\Z(n))_{\text{tors}} \to H^m_B(X,\Z(n))_{\text{tors}}$ is surjective. With this in mind we consider that the groups $H^i_B(X,\Z)$ are torsion free and then Poincaré duality holds, i.e. the pairing 
\begin{align*}
    H^i_B(X,\Z) \otimes H^{2d-i}_B(X,\Z) &\to \Z \\
    (\alpha,\beta) &\xrightarrow{ \cup } \alpha \cup \beta
\end{align*}
is perfect. By \cite[Theorem 1.1]{RS} we have  $H^{2i}_B(X, \Q)$ is algebraic if and only if $H^{2i}_B(X,\Z)$ is L-algebraic, thus there exists a set of cycles which are send to the generators $\left\{e^{2i}_j\right\}_{1\leq j \leq b_{2i}(X)}$ of $H^i_B(X,\Z)$ and notice that by Poincaré duality we have a dual basis $\left\{\hat{e}^{2(d-i)}_j\right\}_{1\leq j \leq b_{2(d-i)}(X)}$ for the dual of $H^{2i}_B(X,\Z)$. Let us remark that we have the following
\begin{align*}
    e^{2i}_j \cup \hat{e}^{2(d-i)}_l =\begin{cases} 0 \text{ if }j \neq l \\
    1\text{ if }j = l
    \end{cases}
\end{align*}

By hypothesis, there exists L-algebraic cycles $\left\{\alpha_j^{i}\right\}_{1\leq j \leq b_{2i}(X)} \subset \CH^i_L(X)$ and 
$$\left\{\hat{\alpha}_l^{d-i}\right\}_{1\leq l \leq b_{2(d-i)}(X)}\subset \CH^{d-i}_L(X)$$ such that 
\begin{align*}
    c^i_L(\alpha_j^{i})= e_j^{2i}, \hspace{4mm} c^{d-i}_L(\hat{\alpha}_l^{d-i})= \hat{e}_l^{2(d-i)}
\end{align*}
for all $1\leq j \leq b_{2i}(X)$ and $1\leq l \leq b_{2(d-i)}(X)$. Due to the compatibility of the cycle class map with intersection products we have that 
\begin{align*}
    \alpha_j^{i} \cdot \hat{\alpha}^{d-i}_l =\begin{cases} 0 \text{ if }j \neq l \\
    1\text{ if }j = l.
    \end{cases}
\end{align*}

Let us define the elements 
\begin{align*}
    p_{2i,j}=\alpha_j^{i}\times \hat{\alpha}_j^{d-i} \hspace{4mm} q_{2i,j}=\hat{\alpha}_j^{d-i} \times \alpha_j^{i}
\end{align*}
and note that
$p_{2i,j}=q^t_{2i,j}$. Even more, these are orthogonal projectors. For $i<d$, define the projectors 
\begin{align*}
p_{2i}(X):=\sum_{1\leq j \leq b_{2i}(X)} p_{2i,j} \hspace{4mm} p_{2(d-i)}(X):=\sum_{1\leq j \leq b_{2i}(X)} q_{2i,j}
\end{align*}
and for $2i-1\neq d$ we put $p_{2i-1}(X)=0$. The remaining part should involve torsion classes. As the groups $H^{2j+1}_B(X,\Z)$ are  torsion for all $j \in \N$, the groups $H^{2k+1}_B(X\times X,\Z)$ are torsion for all $k \in \N$ by the Künneth formula, this implies that all intermediates Jacobians $J^{k+1}(X\times X)$ vanish for all $k \in \N$. Combining \cite[Theorem 1.1.b]{RS} and \cite[Proposition 3.1.5]{roso23} we obtain an isomorphism $\CH^k_L(X \times X)_{\text{tors}} \xrightarrow{ \simeq} H^{2k}_B(X\times X,\Z(k))_{\text{tors}}$ for all $k \in \N$, so in particular for the degree $k=d$. Consider that we have the diagonal element $\Delta$ and let us denote the torsion free part as $\Delta_{\text{tf}}= \sum_{j=0}^{2d} p_i(X) $ and consider $\Delta_{\text{tors}}=\Delta-\Delta_{\text{tf}}$. As this element $\Delta_{\text{tors}}  \in H^{2d}_B(X\times X,\Z)$, then it has a unique preimage in $\CH^d_L(X \times X)_\text{tors}$, which is denoted as $\Delta_{\text{tors}}$ again, thus we have the following decomposition of the diagonal
\begin{align*}
    \Delta = \sum_{j=0}^{2d} p_i(X) + \Delta_{\text{tors}}.
\end{align*}
Since the isomorphism $\CH^k_L(X \times X)_{\text{tors}} \xrightarrow{ \simeq} H^{2k}_B(X\times X,\Z)_{\text{tors}}$ is an isomorphism for all $k$, the projectors in $H^{2k}_B(X\times X , \Z)_{\text{tors}}$ can be lifted to $\CH^k_L(X \times X)$.


\end{proof}

\begin{example}
\begin{enumerate}
\item Let $X$ be a smooth complex complete intersection in projective space. As all the cohomology groups are algebraic and torsion free, we have a decomposition of étale integral motives as follows:
\begin{align*}
    h_\et(X)\simeq \mathbf{1} \oplus  \mathbb{L}\oplus \ldots \oplus h^d_\et(X) \oplus \ldots \oplus \mathbb{L}^d.
\end{align*}
where $\mathbb{L}$ is the Lefschetz motive and  $h^d_\et(X)=(X,p^{\et}_d(X),0)$ with 
\begin{align*}
   p^{\et}_d(X)=\Delta-\sum_{i=0,2i\neq d}^{2d} p_i^{\et}(X). 
\end{align*}

\item Let $X$ be a smooth K3 surface. For such $X$ we have the following isomorphisms
    \begin{align*}
        H^0(X,\Z) \simeq H^4(X,\Z) \simeq \Z, \ H^1(X,\Z) \simeq H^3(X,\Z) = 0,  \ \ H^2(X,\Z)\simeq \Z^{22}
    \end{align*}
    and $\text{Pic}(X)=\Z^{\rho(X)}$, with $\rho(X)$ the Picard rank of $X$ and $0\leq \rho(X)\leq 20$. Since the cohomology is torsion free, we apply Proposition \ref{PropM} to obtain a decomposition of the étale motive
    \begin{align*}
        h_\et(X)\simeq h^0_\et(X)\oplus h^2_\et(X) \oplus h^4_\et(X). 
    \end{align*}
    
    \item Let $S$ be an Enriques surface. As $H^i(\mathcal{O}_S)=0$ for $i=1,2$ we have an isomorphism $\text{Pic}(S)\to H^2_B(S,\Z)\simeq \Z^{10} \oplus \Z/2$ while the other cohomology groups are characterized by
    \begin{align*}
        H^0(S,\Z)=H^4(S,\Z)=\Z, \ H^1(S,\Z)=0 \ \text{ and } \ H^3(S,\Z)=\Z/2.
    \end{align*}
as we can lift the torsion free part, we have to care about the torsion part of the cohomology. By Künneth formula, we have that $H^5_B(S\times S,\Z)\simeq (\Z/2)^{\oplus 23}$ and $H^3_B(S\times S,\Z)\simeq \Z/2 \oplus \Z/2$ thus we conclude that the intermediate Jacobians $J^2(S\times S)=0$ and $J^3(S \times S)=0$ vanish. Combining \cite[Proposition 5.1]{RS} and \cite[Proposition 3.1.4]{RoSo} we have an isomorphism $\CH^2_L(S\times S)_{\text{tors}} \xrightarrow{\simeq} H^4_B(S\times S,\Z(2))_{\text{tors}}$ which acts as the identity on the torsion part.
\item For a Calabi-Yau threefold $X$ (for example a quintic threefold) $X$ the Betti numbers are $h^1(X)=h^5(X)=0$ and $h^0(X)=h^2(X)=h^4(X)=h^6(X)=1$, thus we obtain a decomposition of the motive $h_\et(X)$ as
\begin{align*}
    h_\et(X) \simeq \mathbf{1} \oplus \mathbb{L} \oplus h_\et^3(X) \oplus \mathbb{L}^2 \oplus \mathbb{L}^3.  
\end{align*}
\end{enumerate}
\end{example}


\section{Improved Manin version for the étale case}

Let us consider the functor $F^i$ defined as follows $F^i:\ch_\et(k) \to \Z\text{-mod}$, $M\mapsto F^i(M):=\ho_{\ch_\et(k)} (\mathbb{L}^i,M)$, with $M$ of the form $M=(X,p,m)$, and consider the $\Z$-graded functor $F:=\oplus_{i \in \Z} F^i: \ch_\et(k) \to \Z\text{-modGr}$. By definition of $F^i$ we have that
\begin{align*}
    F^i(M)&=\ho_{\ch_\et(k)} (\mathbb{L}^i,M)\\
    &=p \circ \CH^{i+m}_\et(\spc(k)\times X) \\
    &\simeq p_*\CH^{i+m}_\et(X)=\CH^i_\et(M).
\end{align*}
By definition $F$ is an additive functor, and by duality 
\begin{align*}
    \ho_{\ch_\et(k)}((X,p,m),(Y,q,n))&=q \circ \text{Corr}_\et^{n-m}(X,Y) \circ p\\
    &=q \circ \CH_\et^{n-m+d_X}(X\times Y) \circ p \\
    &\simeq F^0(M \otimes N^\vee).
\end{align*}
Notice that $N$ is a sub-motive of $h(Y)\otimes \mathbb{L}^n$ for some $Y \in \text{SmProj}_k$ and $n \in \Z$ and by duality $F^0(M\otimes h(Y)\otimes \mathbb{L}^n ) \simeq F^{-n}(M\otimes h_\et(Y) )$. For a fixed $M \in \ch_\et(k)$ define the following functor
\begin{align*}
    \omega_M:\text{SmProj}_k^{op}&\to \Z\text{-modGr} \\
    Y &\mapsto \omega_M(Y):=F(M\otimes h_\et(Y))
\end{align*}
then Yoneda embedding implies that the functor 
\begin{align*}
    \omega:\ch_\et(k)&\to \Z\text{-modGr}^{\text{SmProj}_k^{op}} \\
    M &\mapsto \omega_M
\end{align*}
is fully faithful. Hence we recover the classical Manin principle but in the étale setting!
\begin{prop}\label{maniet}[Manin's identity principle]
Let $f$, $g: M\to N$ be morphism of  étale motives then:
\begin{enumerate}
    \item $f$ is an isomorphism if and only if the induced map 
    \begin{align*}
        \omega_f(Y):\omega_M(h(Y))\to \omega_N(h(Y)) 
    \end{align*}
    is an isomorphism for all $Y \in \text{SmProj}_k$ and $f=g$ is and only if $\omega_f(Y)=\omega_g(Y)$ for all $Y \in \text{SmProj}_k$.
    \item A sequence
    \begin{align*}
        0\to M_1 \xrightarrow{f}  M_2 \xrightarrow{g} M_3 \to 0 
    \end{align*}
    is exact if and only if, for every $Y \in \text{SmProj}_k$ the sequence 
    \begin{align*}
        0\to \omega_{M_1}(h(Y)) \xrightarrow{  \omega_f(h(Y))} \omega_{M_2}(h(Y)) \xrightarrow{\omega_g(h(Y))} \omega_{M_3}(h(Y)) \to 0 
    \end{align*}
\end{enumerate}
\end{prop}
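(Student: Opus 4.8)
The plan is to deduce the statement entirely from the fact that the functor $\omega\colon \ch_\et(k)\to \Z\text{-modGr}^{\text{SmProj}_k^{op}}$ is fully faithful, which was established just above via the Yoneda embedding together with the duality identifications $\ho_{\ch_\et(k)}(M,N)\simeq F^0(M\otimes N^\vee)$ and $\omega_M(Y)=F(M\otimes h_\et(Y))$. First I would observe that by construction $\omega_f(Y)$ is nothing but the image of $f$ under $\ho_{\ch_\et(k)}(M,N)\to \ho(\omega_M(Y),\omega_N(Y))$; since $\omega$ is fully faithful, the collection of these maps, as $Y$ ranges over $\text{SmProj}_k$, determines $f$ completely. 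Concretely, $\omega$ being fully faithful means precisely that $\ho_{\ch_\et(k)}(M,N)\xrightarrow{\ \simeq\ }\ho_{\Z\text{-modGr}^{\text{SmProj}_k^{op}}}(\omega_M,\omega_N)$, and a morphism of presheaves is an isomorphism iff it is an isomorphism on every object $Y$. This yields part (1): $f=g$ iff $\omega_f=\omega_g$ iff $\omega_f(Y)=\omega_g(Y)$ for all $Y$; and $f$ is an isomorphism iff $\omega_f$ is an isomorphism of presheaves (using that a fully faithful functor reflects isomorphisms) iff $\omega_f(Y)$ is an isomorphism for each $Y$.

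For part (2), the key point is that $\ch_\et(k)$ is a pseudo-abelian (Karoubian) $\Z$-linear category rather than an abelian one, so ``exact'' must be interpreted as split exact, i.e. the existence of an isomorphism $M_2\simeq M_1\oplus M_3$ compatible with $f$ and $g$; equivalently $f$ is a split monomorphism, $g$ a split epimorphism, and $\ker g = \operatorname{im} f$ in the idempotent-completed sense. I would argue as follows: the functor $\omega$ is additive and fully faithful, hence it reflects (and preserves) direct sums and split exact sequences. Given a split exact sequence in $\ch_\et(k)$, applying $\omega$ and then evaluating at $Y$ (evaluation at $Y$ is additive and exact on $\Z\text{-modGr}^{\text{SmProj}_k^{op}}$) produces a split exact sequence of graded abelian groups. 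Conversely, if $0\to\omega_{M_1}(h(Y))\to\omega_{M_2}(h(Y))\to\omega_{M_3}(h(Y))\to 0$ is (split) exact for every $Y$, then choosing a splitting $s\colon M_3\to M_2$ amounts to finding an element of $\ho_{\ch_\et(k)}(M_3,M_2)$ whose image under $\omega$ is a compatible family of splittings; one produces such a compatible family because the splittings of the evaluated sequences can be assembled into a morphism of presheaves $\omega_{M_3}\to\omega_{M_2}$ (naturality follows from the uniqueness of the induced map on cokernels once one pins down the retraction on a generator), and full faithfulness lifts it uniquely to $\ch_\et(k)$. Then $f\oplus s\colon M_1\oplus M_3\to M_2$ maps under $\omega$ to an isomorphism at every $Y$, hence is an isomorphism by part (1).

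The main obstacle I anticipate is the bookkeeping in part (2): one must be careful that ``exact sequence'' in a pseudo-abelian category really does mean split exact, and that the splittings chosen level-wise in $\Z\text{-modGr}^{\text{SmProj}_k^{op}}$ genuinely glue to a natural transformation before invoking full faithfulness — this is where a naive argument could go wrong, since arbitrary choices of level-wise splittings need not be natural. The clean way around this is to never choose splittings by hand: instead note that $\omega_f$ being a split mono with cokernel $\omega_g$ in the functor category is itself expressed by the existence of idempotent endomorphisms $e_1,e_3$ of $\omega_{M_2}$ with $e_1+e_3=\mathrm{id}$, $\operatorname{im}e_1=\operatorname{im}\omega_f$, $\operatorname{im}e_3=\operatorname{im}\omega_g^\vee$; since $\omega$ is full, each $e_i$ lifts to an idempotent in $\operatorname{End}_{\ch_\et(k)}(M_2)$ (idempotents lift along full faithful additive functors once the target category is idempotent-complete and the source is too, using uniqueness of the lift), and the resulting decomposition of $M_2$ is the desired splitting. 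With $\ch_\et(k)$ pseudo-abelian by construction, this makes both directions of (2) formal, and the statement follows.
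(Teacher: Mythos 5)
Part (1) is correct and matches the paper's own justification: both arguments rest on the full faithfulness of $\omega$ together with the fact that a natural transformation between presheaves valued in graded abelian groups is an isomorphism precisely when it is so on each object $Y$.

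Part (2), however, has a genuine gap in the ``clean'' idempotent argument you end up preferring. You propose to note that ``$\omega_f$ being a split mono with cokernel $\omega_g$ in the functor category is itself expressed by the existence of idempotent endomorphisms $e_1, e_3$ of $\omega_{M_2}$'' and then lift these via fullness. But the hypothesis of the converse implication only supplies objectwise exactness of $0\to \omega_{M_1}(Y)\to \omega_{M_2}(Y)\to \omega_{M_3}(Y)\to 0$, i.e.\ exactness in the abelian functor category $\Z\text{-modGr}^{\text{SmProj}_k^{op}}$. It does \emph{not} supply a splitting of this sequence as a sequence of presheaves; exact short sequences in an abelian category need not split, and an objectwise choice of sections need not be natural — which is exactly the obstruction you correctly flagged one sentence earlier. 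Without a natural splitting, the idempotents $e_1, e_3 \in \text{End}(\omega_{M_2})$ you want to lift simply do not exist, so the argument assumes precisely what it is supposed to prove. The right move is the one you sketched first and then discarded: do not choose sections at every $Y$, but only at the one $Y$ that matters. By the duality identification $\ho_{\ch_\et(k)}(M,N)\simeq F^0(M\otimes N^\vee)$ established just before the proposition, the hom-set $\ho_{\ch_\et(k)}(M_3,M_2)$ occurs as a direct summand of $\omega_{M_2}$ evaluated at the underlying variety of $M_3$, so surjectivity of $\omega_g$ at that single variety already yields an honest morphism $s\colon M_3\to M_2$ with $g\circ s=\text{id}_{M_3}$, with no naturality left to verify. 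Then $f\oplus s\colon M_1\oplus M_3\to M_2$ is objectwise an isomorphism, hence an isomorphism in $\ch_\et(k)$ by part (1). This is the Lieberman-style mechanism that the paper later makes explicit in Lemma \ref{kim6.8}, and it is what the paper's terse ``fully faithful functors reflect monic, epi and isomorphisms'' is silently relying on for the exactness statement.
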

\begin{proof}
This properties is a consequence of the faithfulness of the functor $\omega$ and the fact that fully faithful functor reflects monic, epi and isomorphisms.
\end{proof}

The following isomorphisms in $\ch_\et(k)$ are obtained as a consequence of \cite[Lemma 2.8]{roso23} about the structure of étale motivic cohomology groups: we can obtain decomposition for motives of a projective bundle, blow-ups with smooth center and flag varieties. 

\begin{example}
\begin{enumerate}
    \item Consider $E$ a locally free sheaf of rank $(n+1)$ over $X$, and $\pi:\Pro_X(E)\to X$ its associated projective bundle. Then 
\begin{align*}
    \CH^i_\et(\Pro_X(E)) \simeq \bigoplus_{j=0}^{n}\CH^{i-j}_\et(X) .
\end{align*}
Since this isomorphism is functorial with respect to base change, for all $Y \in \text{SmProj}_k$ we have an isomorphism $ \CH^i_\et(Y\times \Pro_X(E)) \simeq \bigoplus_{j=0}^{n}\CH^{i-j}_\et(Y\times X)$, therefore we have a decomposition of the motive of $\Pro_X(E)$ as 
\begin{align*}
    h_\et(\Pro_X(E))\simeq \bigoplus_{i=0}^n h_\et(X)(-i).
\end{align*}

\item Consider $Y=\text{Bl}_Z X$ the Blow-up of $X \in \text{SmProj}_k$ along a smooth sub-scheme $Z$ of codimension $(d+1)$. Since the isomorphism described in \cite[Lemma 2.8]{roso23} is functorial with respect to base change, then we have a decomposition of the motive of $Y$ as follows
\begin{align*}
    h_\et(Y)\simeq h_\et(X) \oplus \bigoplus^m_{i=1}h_\et(Z)(-i).
\end{align*}

\item Let $S$ be a smooth $k$-scheme and let $X\to S$ be a flat morphism of relative dimension $n$ such that $X$ has a decomposition in smooth projective varieties $X=X_p\supset X_{p-1}\supset \ldots \supset  X_{0}\supset X_{-1}=\emptyset $ with $X_i-X_{i-1}\simeq \mathbb{A}_S^{n-d_i}$ for some $d_i \in \Z$. Since the characterization of the étale Chow groups of $X$ given in \cite[Lemma 2.8]{roso23} is functorial with respect to base change $S \to S\times Y$, then 
\begin{align*}
    h_\et(X)\simeq \bigoplus_{i=0}^p h_\et(S)(d_i).
\end{align*}
\end{enumerate}
\end{example}

Let us study the behaviour of the category of étale Chow motives under a field extension. For that we use the theory of étale Chow groups. Consider a field $k$ and let $\Omega$ be a universal domain of $k$. Let $M=(X,p)$ be an étale Chow motive and assume that $\CH_\et^i(M_\Omega)=0$ for all $i\geq 0$. Let us say come facts about Lichtenbaum cohomology:

\begin{prop}\label{chafie}
    Let $k$ be a field, $X$ a smooth projective $k$-scheme and $K$ a field extension of $k$. Setting an integer $i\geq 0$:
    \begin{enumerate}
        \item If $k$ is a field with $k = \bar{k}$ and $K=\bar{K}$ then the map $\CH^i_L(X)\to \CH^i_L(X_K)$ induced by the base change is injective.
        \item If $K$ is a finite purely inseparable extension then the maps $\CH_L^i(X)\to \CH_L^i(X_K)$ and $\CH_L^i(X_K)\to \CH_L^i(X)$ are isomorphisms.
    \end{enumerate}
\end{prop}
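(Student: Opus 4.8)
The plan is to handle the two parts by rather different means: part (1) by a rational-point section argument after reducing to a finitely generated base, in the spirit of the classical statement for ordinary Chow groups, and part (2) by topological invariance of étale motives.

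For (1), I would first write $K$ as the filtered union of its finitely generated $k$-subalgebras $A$. Since $k=\bar k$ is perfect, those $A$ for which $\spc(A)$ is smooth over $k$ are cofinal, so I may assume $\spc(A)$ smooth and set $X_A:=X\times_k\spc(A)$, again smooth over $k$. As $\spc(K)=\varprojlim_A\spc(A)$ with affine transition maps, continuity of $\text{DM}_\et(-,\Z)$ (\cite[Proposition 6.3.7]{CD16}) yields $\CH^i_L(X_K)=\varinjlim_A\CH^i_L(X_A)$, and the base-change morphism factors through the projection pullback $p_A^*:\CH^i_L(X)\to\CH^i_L(X_A)$ for each $A$, where $p_A:X_A\to X$ is the projection. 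Hence it is enough to show every $p_A^*$ is injective. Since $k$ is algebraically closed, the Nullstellensatz provides a $k$-point $s:\spc(k)\to\spc(A)$, and the resulting section $\sigma:=\mathrm{id}_X\times s$ of $p_A$ satisfies $\sigma^*\circ p_A^*=\mathrm{id}$ on $\CH^i_L(X)=H^{2i}_\et(X,\Z(i))$ by functoriality of étale motivic cohomology on $\mathrm{Sm}/k$. Thus $p_A^*$ is split injective, and passing to the colimit gives the injectivity of $\CH^i_L(X)\to\CH^i_L(X_K)$.

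For (2), the key remark is that a finite purely inseparable extension is radicial, so $g:\spc(K)\to\spc(k)$, and hence its base change $\pi:X_K\to X$, is a universal homeomorphism. By topological invariance of étale motives, \cite[Proposition 6.3.16]{CD16}, the pullback $g^*:\text{DM}_\et(k,\Z)\to\text{DM}_\et(K,\Z)$ is an equivalence of categories, with quasi-inverse $g_*$, taking $M_\et^k(X)$ to $M_\et^K(X_K)$ and $\mathbf 1_k(i)[2i]$ to $\mathbf 1_K(i)[2i]$. Writing $\CH^i_L(X)=\ho_{\text{DM}_\et(k,\Z)}(M_\et^k(X),\mathbf 1(i)[2i])$ and similarly over $K$, the equivalence $g^*$ identifies these two $\ho$-groups, and the induced bijection is exactly the base-change map $\CH^i_L(X)\to\CH^i_L(X_K)$; it is therefore an isomorphism, and its inverse is the reverse map $\CH^i_L(X_K)\to\CH^i_L(X)$ obtained by applying $g_*$, which is thus also an isomorphism.

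The bookkeeping steps---checking that the colimit comparison in (1) coincides with the base-change map, and that the motivic complex $\Z(i)$ is stable under the pullbacks used so that $\sigma^*$ and $g^*$ act on cohomology as asserted---are routine consequences of the standard formalism of $\text{DM}_\et(-,\Z)$. I expect the one point deserving genuine care to be the continuity input with integral coefficients in the precise form $\CH^i_L(X_K)=\varinjlim_A\CH^i_L(X_A)$; once that is available, the rest is formal.
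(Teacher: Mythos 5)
Your argument takes a genuinely different and arguably cleaner route than the paper's, for both parts.

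For part (1), the paper proves injectivity by a diagram chase: it uses smooth base change to see that $H^{m}_\et(X,\Q_\ell/\Z_\ell(n)) \to H^{m}_\et(X_K,\Q_\ell/\Z_\ell(n))$ is an isomorphism, feeds this into the short exact sequence comparing Lichtenbaum cohomology with its torsion and divisible pieces, and then assembles the conclusion from a second diagram comparing torsion, rational and $\Q/\Z$ parts. Your approach avoids this torsion/torsion-free bookkeeping entirely: you use continuity of $\text{DM}_\et$ to reduce to finitely generated (and, by generic smoothness over a perfect field, smooth) subalgebras $A\subset K$, and then observe that the projection $p_A\colon X_A\to X$ has a section coming from a $k$-point of $\spc A$, so $p_A^*$ is split injective and the colimit map is injective. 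This is a more structural argument; the one place needing genuine verification, as you note, is the continuity statement in the integral étale setting giving $\CH^i_L(X_K)=\varinjlim_A \CH^i_L(X_A)$, which does hold under the running hypotheses.

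For part (2), the paper again splits into torsion (invariance of étale cohomology under the radicial morphism $X_K\to X$) and torsion-free (Vial's Lemma 1.2 for rational Chow groups), while you invoke topological invariance of $\text{DM}_\et$ directly, i.e.\ \cite[Proposition 6.3.16]{CD16}, which is cleaner and is in fact a tool the paper itself uses elsewhere (in Lemma \ref{consCD}). One small caveat: you identify the reverse map $\CH^i_L(X_K)\to\CH^i_L(X)$ with the map induced by $g_*$, the quasi-inverse of the equivalence. If the proposition means the geometric pushforward $\pi_*$ along the finite radicial morphism $\pi\colon X_K\to X$, then these differ by a factor of $[K:k]=p^n$ (since $\pi_*\pi^*=[K:k]\cdot\mathrm{id}$), so your argument shows $\pi_*$ is an isomorphism only after inverting $p=\mathrm{char}(k)$. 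This is, however, the same implicit assumption the paper makes — its torsion argument covers only $\ell\neq p$ and its free part uses a rational result — so both proofs are operating with the same convention on $p$, and yours makes the underlying mechanism (the equivalence of categories) more transparent.
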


\begin{proof}
 First, let $k$ be a perfect field with $k = \bar{k}$ and consider a field extension $K$ which is again algebraically closed. By the smooth base change, we have that $H^{m}_\et(X,\mu_{\ell^r}^{\otimes n}) \to H^{m}_\et(X_K,\mu_{\ell^r}^{\otimes n})$ is an isomorphism when $\ell$ is prime to the characteristic of $k$ and then so it is the morphism $ H^{m}_\et(X,\Q_\ell/\Z_\ell(n)) \to H^{m}_\et(X_K,\Q_\ell/\Z_\ell(n))$ and from the commutative diagram with exact rows
 \begin{equation*}
  \begin{tikzcd}
0 \arrow{r} & H^{m}_L(X,\Z(n)) \otimes \Q_\ell/\Z_\ell \arrow{r} \arrow{d}& H^{m}_\et(X,\Q_\ell/\Z_\ell(n)) \arrow{r}\arrow{d}{\simeq} & H^{m+1}_L(X,\Z(n))\{\ell\} \arrow{r}\arrow[d]& 0 \\
0 \arrow{r} & H^{m}_L(X_K,\Z(n)) \otimes \Q_\ell/\Z_\ell \arrow{r} & H^{m}_\et(X_K,\Q_\ell/\Z_\ell(n)) \arrow{r} & H^{m+1}_L(X_K,\Z(n))\{\ell\} \arrow{r} & 0
  \end{tikzcd}
\end{equation*}
 we conclude that $H^{m}_L(X,\Z(n)) \otimes \Q_\ell/\Z_\ell \to H^{m}_L(X_K,\Z(n)) \otimes \Q_\ell/\Z_\ell$ is an injective morphism. Recalling that for a separably closed field as in our case we have that for $m \neq 2n+1$ an isomorphism $H^m_L(X,\Z(n))\{\ell\}\simeq H^{m-1}_\et(X,\Q_\ell/\Z_\ell(n))$. 

Let $A^{m,n} =  H^m_L(X,\Z(n))$ and $A^{m,n}_K =  H^m_L(X_K,\Z(n))$, then we have that following commutative diagram 
\begin{equation*}
  \begin{tikzcd}
0 \arrow{r} & A^{m,n} _\text{tor} \arrow{r} \arrow{d}{\simeq} & A^{m,n} \arrow{r}\arrow{d}& A^{m,n} \otimes \Q \arrow{r}\arrow[d, hookrightarrow]& A^{m,n}  \otimes \Q/\Z\arrow{r} \arrow[d, hookrightarrow]& 0 \\
0 \arrow{r} & A^{m,n}_{K,\text{tor}}\arrow{r} & A^{m,n}_K\arrow{r} & A^{m,n}_K \otimes \Q  \arrow{r}& A^{m,n}_K \otimes \Q/\Z\arrow{r} & 0
  \end{tikzcd}
\end{equation*}
the arrow $A^{m,n}\otimes \Q \to A^{m,n}_K\otimes \Q$ is an injection by classical arguments, therefore $A^{m,n} \to A^{m,n}_K$ is an injective map as well. 

For the second part the argument goes in the same direction. The isomorphism for the torsion part is a consequence of the map $X_K \to X$ which is finite surjective radiciel (see \cite[Proposition 5.7.1]{Fu15}), therefore
$H^{m-1}_\et(X,\mu_{\ell^r}^{\otimes n}) \to H^{m-1}_\et(X_K,\mu_{\ell^r}^{\otimes n})$ is an isomorphism. The isomorphism of the torsion free part is a consequence of \cite[Lemma 1.2]{Via}. We then conclude as in the previous case.
\end{proof}
For the moment, let us mention an analogue of \cite[Theorem 6.8]{Kimu}.

\begin{defi}\label{defsurj}
Let $f:M \to N$ be a morphism of étale Chow motives. We say that $f$ is a surjective morphism if for all $Z \in \text{SmProj}_k$ the induced map
\begin{align*}
    (f\otimes \text{id}_Z)_*: \CH^n_\et(M\otimes h(Z)) \to \CH^n_\et(N\otimes h(Z)) 
\end{align*}
is surjective for all $n$.
\end{defi}

\begin{lemma}\label{lieb}[Lieberman's lemma]
Let $X,Y,Z$ and $W\in \text{SmProj}_k$. Consider $f \in \text{Corr}_\et(X,Y)$, $\alpha \in  \text{Corr}_\et(X,Z)$ and $\beta \in \text{Corr}_\et(Y,W)$. Then $(\alpha \times \beta)_*(f)=\beta \circ f \circ \alpha^t$.
\end{lemma}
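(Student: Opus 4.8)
The plan is to reduce Lieberman's lemma to exactly the cycle-theoretic manipulation of projections that establishes its classical counterpart (see \cite[Chapter 2]{MNP} or \cite{Fu15}), once we record that the whole intersection-theoretic package is available for étale (Lichtenbaum) Chow groups. Recall that $\text{Corr}_\et(X,Y)$ is, up to regrading, $\CH_\et(X\times Y) \cong H^{2\bullet}_L(X\times Y,\Z(\bullet))$, and that by the structural results on étale motivic cohomology used above and in \cite{roso23,RS} the groups $\CH^\ast_\et$ carry proper pushforward, lci (in particular flat) pullback, an associative and graded-commutative cup product, the projection formula, and base-change compatibility of pushforward along flat maps — equivalently, this is the module structure of $\CH^i_\et(X) = \ho_{\text{DM}_\et(k,\Z)}(M_\et(X),\Z(i)[2i])$ over the motivic ring spectrum together with the six-functor formalism on $\text{DM}_\et(-,\Z)$. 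In particular composition of étale correspondences is associative and the transpose and exterior product behave as in the Chow case, so the right-hand side $\beta\circ f\circ\alpha^t$ is unambiguous and $\alpha^t\in\text{Corr}_\et(Z,X)$.

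First I would work on the fourfold product $V:=X\times Y\times Z\times W$ and name the projections $\mathrm{pr}_{(-)}$ from $V$ (and from the relevant threefold products) onto every subproduct of factors; every square relating two such projections is Cartesian with smooth projective legs, so flat pullback commutes with proper pushforward along it and the projection formula applies. I would then expand the right-hand side: by definition of composition, $f\circ\alpha^t=(\mathrm{pr}_{ZY})_\ast\!\big(\mathrm{pr}_{ZX}^\ast\alpha^t\cdot\mathrm{pr}_{XY}^\ast f\big)$ on $Z\times X\times Y$, and $\beta\circ(f\circ\alpha^t)=(\mathrm{pr}_{ZW})_\ast\!\big(\mathrm{pr}_{ZY}^\ast(f\circ\alpha^t)\cdot\mathrm{pr}_{YW}^\ast\beta\big)$ on $Z\times Y\times W$; pulling both formulas back to $V$ and pushing forward in stages, flat base change and the projection formula collapse the iterated push–pull into the single expression $(\mathrm{pr}_{ZW})_\ast\!\big(\mathrm{pr}_{XY}^\ast f\cdot\mathrm{pr}_{XZ}^\ast\alpha\cdot\mathrm{pr}_{YW}^\ast\beta\big)$ on $V$, using $\mathrm{pr}_{ZX}^\ast\alpha^t=\mathrm{pr}_{XZ}^\ast\alpha$ since the transpose is the coordinate swap.

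Second I would expand the left-hand side on the same $V$. Viewed as a correspondence from $X\times Y$ to $Z\times W$, the exterior product is $\alpha\times\beta=\mathrm{pr}_{XZ}^\ast\alpha\cdot\mathrm{pr}_{YW}^\ast\beta$ on $V$ (the reorganization of $(X\times Z)\times(Y\times W)$), and its action on $f$ is by definition $(\alpha\times\beta)_\ast(f)=(\mathrm{pr}_{ZW})_\ast\!\big(\mathrm{pr}_{XY}^\ast f\cdot(\alpha\times\beta)\big)$, which is literally the same expression $(\mathrm{pr}_{ZW})_\ast\!\big(\mathrm{pr}_{XY}^\ast f\cdot\mathrm{pr}_{XZ}^\ast\alpha\cdot\mathrm{pr}_{YW}^\ast\beta\big)$ obtained above. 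Comparing the two yields $(\alpha\times\beta)_\ast(f)=\beta\circ f\circ\alpha^t$.

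The main obstacle is not the bookkeeping of projections, which is routine and identical to the classical argument, but ensuring that proper pushforward, lci pullback, the projection formula and flat base change genuinely hold for $\CH^\ast_\et$ with integral coefficients rather than merely rationally; this is precisely what the realization of étale Chow groups as motivic cohomology groups in $\text{DM}_\et(k,\Z)$ — together with the explicit computations of \cite{roso23} and \cite{RS} — provides. With that formalism in place, Lieberman's lemma follows verbatim from the classical proof.
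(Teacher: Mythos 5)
Your proposal is correct and takes the same route as the paper: the paper's proof is the single line ``We follow the proof of \cite[Lemma 2.1.3]{MNP}'', and you have unwound that citation into the explicit push--pull computation on $X\times Y\times Z\times W$, correctly identifying that the only nontrivial input is the availability of proper pushforward, flat pullback, projection formula, and base change for $\CH^\ast_\et$ with integral coefficients (which holds since these are motivic cohomology groups in $\text{DM}_\et(k,\Z)$). Both sides reduce to $(\mathrm{pr}_{ZW})_\ast\bigl(\mathrm{pr}_{XY}^\ast f\cdot\mathrm{pr}_{XZ}^\ast\alpha\cdot\mathrm{pr}_{YW}^\ast\beta\bigr)$ exactly as in the classical case, so the argument is complete.
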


\begin{proof}
We follow the proof of \cite[Lemma 2.1.3]{MNP}.
\end{proof}

\begin{lemma}\label{kim6.8}
Let $f:M=(X,p,m)\to N=(Y,q,n)$ be a morphism of étale Chow motives. The following conditions are equivalent:
\begin{enumerate}
    \item $f$ is surjective.
    \item There exists a right inverse $g:N\to M$ i.e. $f\circ g = \text{id}_N$.
    \item $q=f \circ s$ for some $s \in \text{Corr}_\et^0(Y,X)$.
\end{enumerate}
\end{lemma}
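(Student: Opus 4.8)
The plan is to mimic the classical proof of Kimura's Theorem 6.8 (as in \cite[Theorem 6.8]{Kimu} or the Manin-style argument in \cite{MNP}), adapting it to the étale setting via the Manin identity principle of Proposition \ref{maniet} and Lieberman's Lemma \ref{lieb}. The implications $(2)\Rightarrow(1)$ and $(2)\Rightarrow(3)$ are essentially formal: if $f\circ g=\mathrm{id}_N$ then for any $Z$ the map $(f\otimes\mathrm{id}_Z)_*$ has the section $(g\otimes\mathrm{id}_Z)_*$, hence is surjective; and writing $g$ as a correspondence $s'\in\mathrm{Corr}_\et^{\bullet}(Y,X)$ (twisted appropriately so that $s:=p\circ s'\circ q$ lands in $\mathrm{Corr}_\et^0(Y,X)$ with the right projector constraints), the identity $f\circ g=\mathrm{id}_N$ reads $q\circ(\text{composite})\circ q=q$, i.e. $q=f\circ s$. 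So the content is in $(1)\Rightarrow(3)$ and $(3)\Rightarrow(2)$.

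For $(3)\Rightarrow(2)$: given $q=f\circ s$ with $s\in\mathrm{Corr}_\et^0(Y,X)$, set $g:=p\circ s\circ q:N\to M$ (inserting the projectors so that $g$ is a genuine morphism of motives). Then $f\circ g=f\circ p\circ s\circ q=f\circ s\circ q=q\circ q=q=\mathrm{id}_N$, using $f\circ p=f$ and $q\circ q=q$. This is a short computation once the bookkeeping of projectors and Tate twists is set up correctly.

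For $(1)\Rightarrow(3)$, which I expect to be the main obstacle: surjectivity of $f$ means in particular that $(f\otimes\mathrm{id}_X)_*:\CH^{\bullet}_\et(M\otimes h(X))\to\CH^{\bullet}_\et(N\otimes h(X))$ is surjective. By Lemma \ref{lieb} (Lieberman's lemma), elements of $\CH_\et$ of $M\otimes h(X)$ and $N\otimes h(X)$ in the appropriate degree are identified with correspondences $X\to X$ (resp. $Y\to X$) precomposed/postcomposed with the projectors, and $(f\otimes\mathrm{id}_X)_*$ becomes postcomposition with $f$. The key point is to take the class corresponding to the projector $p$ itself (viewed inside $\mathrm{Corr}_\et(Y,X)$ after the identifications, or more precisely the identity-type class giving $\mathrm{id}_M$) — actually one takes the distinguished element of $\CH^{\bullet}_\et(N\otimes h(X))$ corresponding to $q$ under Lieberman, and lifts it: surjectivity yields $s\in\mathrm{Corr}_\et^0(Y,X)$ (after adjusting by projectors) with $f\circ s=q$. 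The delicate part is checking that the degree/twist indices match so that $s$ genuinely lies in $\mathrm{Corr}^0_\et$ and that the étale Chow groups $\CH^{\bullet}_\et$ carry the needed functoriality and product structure — but this is exactly what \cite[Lemma 2.8]{roso23} and the preceding discussion of $F^i$, $\omega_M$ provide, so the classical argument transfers verbatim. I would conclude by noting that no new ingredient beyond Lemma \ref{lieb} and the identity principle is required, the étale refinement being transparent because all constructions are functorial for the change-of-topology and change-of-coefficients functors.

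\begin{proof}
The implications $(2)\Rightarrow(1)$ and $(2)\Rightarrow(3)$ are immediate: a right inverse $g$ induces a section $(g\otimes\mathrm{id}_Z)_*$ of $(f\otimes\mathrm{id}_Z)_*$ for every $Z$, giving $(1)$; and writing $g$ as a correspondence and composing with the projectors of $N$ gives $(3)$ from the identity $f\circ g=\mathrm{id}_N$.

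$(3)\Rightarrow(2)$: With $M=(X,p,m)$ and $N=(Y,q,n)$ and $s\in\mathrm{Corr}^0_\et(Y,X)$ such that $q=f\circ s$, put $g:=p\circ s\circ q$. Then $g$ is a morphism $N\to M$ and, since $f\circ p=f$ and $q\circ q=q$,
\begin{align*}
f\circ g = f\circ p\circ s\circ q = f\circ s\circ q = q\circ q = q = \mathrm{id}_N,
\end{align*}
so $g$ is the desired right inverse.

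$(1)\Rightarrow(3)$: Apply surjectivity with $Z=X$. By Lieberman's Lemma \ref{lieb}, the map
\begin{align*}
(f\otimes\mathrm{id}_X)_*:\CH^{\bullet}_\et(M\otimes h(X))\to \CH^{\bullet}_\et(N\otimes h(X))
\end{align*}
is identified, in the relevant degree, with postcomposition by $f$ on correspondences into $X$ (suitably twisted and composed with $p$ and $q$). The class of $q$ (equivalently, the identity class of $N$ under these identifications) lies in the target; surjectivity produces a preimage which, after composing on the right with $q$ and on the left with $p$, yields $s\in\mathrm{Corr}^0_\et(Y,X)$ with $f\circ s=q$. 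The matching of degrees and twists, together with the functoriality and product structure of étale Chow groups needed for Lieberman's lemma, is exactly that provided by the discussion preceding Proposition \ref{maniet} and by \cite[Lemma 2.8]{roso23}. This establishes $(1)\Rightarrow(3)$ and completes the cycle of equivalences.
\end{proof}
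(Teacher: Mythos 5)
Your implications $(2)\Rightarrow(1)$, $(2)\Rightarrow(3)$, and $(3)\Rightarrow(2)$ are fine and match the paper's bookkeeping (in particular the verification $f\circ g = f\circ p\circ s\circ q = q\circ q = q = \mathrm{id}_N$ is exactly what is needed). The gap is in $(1)\Rightarrow(3)$, and it is a real one.

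You apply surjectivity with $Z = X$. But $\CH^\bullet_\et(N\otimes h(X))$ is (the image of the projector inside) $\CH^\bullet_\et(Y\times X)$, and the class you want to lift, namely $q$ or $q^t$, lives in $\CH^\bullet_\et(Y\times Y)$. So unless $X=Y$, "the class of $q$" is simply not an element of the target group; there is no "identity class of $N$" sitting in $\CH^\bullet_\et(N\otimes h(X))$. Moreover, Lieberman's Lemma \ref{lieb} identifies $(f\otimes\mathrm{id}_X)_*$ with $\gamma\mapsto\gamma\circ f^t$ (precomposition by $f^t$, sending $\mathrm{Corr}_\et(X,X)$ to $\mathrm{Corr}_\et(Y,X)$), not with postcomposition by $f$ as you assert; even after transposing, lifting an element $\beta\in\mathrm{Corr}_\et(Y,X)$ only produces $\gamma$ with $f\circ\gamma^t=\beta^t\in\mathrm{Corr}_\et(X,Y)$, which can never equal $q\in\mathrm{Corr}_\et(Y,Y)$.

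The correct choice — and what the paper does — is $Z=Y$. Then $N\otimes h(Y)$ sits over $Y\times Y$, and $q^t$ is a $(q\otimes\mathrm{id}_Y)_*$-fixed element of $\CH^\bullet_\et(Y\times Y)$, hence lies in $\CH^\bullet_\et(N\otimes h(Y))$ (indeed $q^t = (q\otimes\mathrm{id}_Y)_*\mathrm{id}_Y$ by Lieberman). Surjectivity gives $r\in\CH^\bullet_\et(M\otimes h(Y))\subset\CH^\bullet_\et(X\times Y)$ with $r\circ f^t = q^t$; transposing gives $f\circ r^t = q$, so $s := r^t$ (suitably composed with the projectors) does the job, and then $g := p\circ r^t\circ q$ is the right inverse. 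So your overall strategy is the intended one, but the choice of test object $Z$ must be $Y$, not $X$, for the distinguished class to exist in the target.
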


\begin{proof}
(1. $\Longrightarrow$ 2.) For this implication, we use Lieberman's lemma (see Lemma \ref{lieb}) for étale correspondences. Assuming point 1. consider the particular case $Z=Y$ and $q^t \in \text{Corr}^0_\et(Y,Y)$. By Lieberman's lemma $q^t=(q\times \text{id}_Y)_*\text{id}_Y$ then $q^t \in \CH_\et^*(N\otimes h(Y))$. By assumption there exists an element $r\in \CH_\et^*(M\otimes h(Y)) \subset \CH_\et^*(X\times Y) $ such that $(f\times \text{id}_Y)_* r =q^t$, and again by Lieberman $r \circ f^t = q^t$. Take $g= p \circ r^t\circ q$.

(2. $\Longrightarrow$ 1.) As $f\circ g = \text{id}_N$ after base change using $Z \in \text{SmProj}_k$ we obtain that $(f\times \text{id}_Z)_* \circ (g\times \text{id}_Z)_* = \text{id}_{N\otimes Z}$. Therefore $(f\times \text{id}_Z)_*: \CH^n_\et(M\otimes h(Z)) \to \CH^n_\et(N\otimes h(Z)) $ is surjective. 

(2. $\Longrightarrow$ 3.) For that just take the $s$ as the correspondence associated to $f\in\text{Corr}_\et^0(Y,X)$.

(3. $\Longrightarrow$ 2.) Consider the morphism defined by $g=p\circ s \circ q$.
\end{proof}

Now again, we get an étale analogue of \cite[Lemma 3.2]{Via}:

\begin{prop}
Let $f:M \to N$ be a morphism of étale motives defined over an algebraically closed field $k$:
\begin{enumerate}
    \item Assume that for some field extension $K$ (with $K=\bar{K}$) the map $(f_K)_*:\CH^i_\et(M_K) \to \CH^i_\et(N_K)$ is injective. Then $f_*:\CH^i_\et(M) \to \CH^i_\et(N)$ is injective.
    \item Assume that for some field extension $K$ (with $K=\bar{K}$) the map $(f_K)_*:\CH^i_\et(M_K) \to \CH^i_\et(N_K)$ is surjective. Then $f_*:\CH^i_\et(M) \to \CH^i_\et(N)$ is surjective.
\end{enumerate}
\end{prop}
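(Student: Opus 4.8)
The plan is to reduce both statements to the analogous properties of Lichtenbaum–Chow groups and then invoke Proposition \ref{chafie}. Write $M=(X,p,m)$ and $N=(Y,q,n)$. After base change to the algebraic closure we may assume, as in the proof of Lemma \ref{kim6.8}, that everything is built from smooth projective varieties and their correspondences, and by definition $\CH^i_\et(M)=p_*\CH^{i+m}_\et(X)$ is a direct summand of $\CH^{i+m}_\et(X)$, which (after the degree shift induced by $m$) is precisely a summand of an étale/Lichtenbaum Chow group of the smooth projective variety $X$. Since $k=\bar k$ and $K=\bar K$, part (1) of Proposition \ref{chafie} says the base-change map $\CH^j_L(X)\to\CH^j_L(X_K)$ is injective for all $j$, and the same holds after cutting out by the idempotent $p$; so the natural square
\begin{equation*}
\begin{tikzcd}
\CH^i_\et(M) \arrow{r}{f_*} \arrow[d, hookrightarrow] & \CH^i_\et(N) \arrow[d, hookrightarrow] \\
\CH^i_\et(M_K) \arrow{r}{(f_K)_*} & \CH^i_\et(N_K)
\end{tikzcd}
\end{equation*}
commutes with injective vertical arrows.

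For statement (1), injectivity of $f_*$ is then immediate by a diagram chase: if $f_*(\xi)=0$ for $\xi\in\CH^i_\et(M)$, then its image in $\CH^i_\et(M_K)$ maps to $(f_K)_*$ of an element that dies in $\CH^i_\et(N_K)$, hence is $0$ by hypothesis on $(f_K)_*$, and then $\xi=0$ by injectivity of the left vertical map. This is the easy half.

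For statement (2) the naive diagram chase does not close, so I would instead argue via the motivic reformulation of surjectivity provided by Lemma \ref{kim6.8} and Definition \ref{defsurj}, exactly as Viale does in the rational setting. The point is that surjectivity of $f$ as a morphism of motives is equivalent (Lemma \ref{kim6.8}) to the existence of $s\in\text{Corr}^0_\et(Y,X)$ with $q=f\circ s$, an identity living in the group $\text{Corr}^0_\et(Y,X)\subset\CH^\ast_\et(X\times Y)$, again a (summand of an) étale Chow group of a smooth projective variety over $k=\bar k$. By part (1) of Proposition \ref{chafie} the base-change map on these correspondence groups is injective; so if $f_K$ admits a section $s_K$ over $K$, one would like to descend it. The subtlety — and this is the main obstacle — is that injectivity of base change alone does not hand us a preimage of $s_K$; one must first know that $f_K$ being surjective on $\CH^i_\et(-_K)$ for one $i$ forces $f_K$ to be a surjective morphism of motives (so that Lemma \ref{kim6.8} applies over $K$), and then that the section, being a correspondence, is defined over $k$. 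The second issue is handled by a standard spreading-out / continuity argument: $K=\bar K$ is a filtered colimit of finitely generated extensions of $k$, étale Chow groups of smooth projective varieties commute with such colimits, and since $k=\bar k$ every finitely generated extension is dominated by one with a $k$-rational point, which splits off a copy of $\CH^\ast_\et$ over $k$; this produces the required preimage of (the class of) $s_K$ in $\text{Corr}^0_\et(Y,X)$. Unwinding, the resulting $s$ satisfies $q=f\circ s$ on the nose because the defining identity, pulled back to $K$, holds and the pullback is injective; hence $f$ is surjective over $k$, and in particular $f_*\colon\CH^i_\et(M)\to\CH^i_\et(N)$ is surjective for every $i$ by Definition \ref{defsurj} and the implication (2.$\Rightarrow$1.) of Lemma \ref{kim6.8}.
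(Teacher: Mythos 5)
The first part of your proof matches the paper's: both run the commutative square with the base-change maps on the left and right and the vertical maps $f_*$, $(f_K)_*$, and conclude injectivity from the injectivity of the vertical (base change) maps established in Proposition~\ref{chafie}. No issues there.

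For the surjectivity claim you take a genuinely different route, and it has a real gap that you yourself flag but do not close. The hypothesis is only that $(f_K)_*$ is surjective on $\CH^i_\et(M_K)\to\CH^i_\et(N_K)$, i.e.\ on the ``$Z=\mathrm{point}$'' case. Lemma~\ref{kim6.8} characterises surjectivity of $f_K$ \emph{as a morphism of motives} (Definition~\ref{defsurj}), which requires surjectivity of $(f_K\otimes\mathrm{id}_Z)_*$ for \emph{every} $Z\in\mathrm{SmProj}_K$. There is no reason, and you give none, why surjectivity of $(f_K)_*$ on $\CH^i_\et$ alone would force this stronger condition. In fact the passage from ``surjective on $\CH^*_\et$'' to ``surjective as a morphism'' is exactly what Lemma~\ref{surj} proves, and that lemma needs $K$ to be a \emph{universal domain} precisely so that the dimension induction with localization sequences can quote $\bar{k(Z)}\hookrightarrow\Omega$; an arbitrary algebraically closed $K$ does not suffice. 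So even if the descent of $s_K$ to $k$ were routine, you would never have produced $s_K$ in the first place. The descent step is also imprecise as written: ``a finitely generated extension is dominated by one with a $k$-rational point, which splits off a copy of $\CH^*_\et$ over $k$'' is not a statement about correspondence groups; what is available (and used elsewhere in the paper, e.g.\ in Lemma~\ref{lemmM0}) is a specialization argument along a localization sequence, which is more delicate and, again, is not needed for the statement at hand.

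The paper's proof avoids all of this by staying at the level of the groups $\CH^i_\et$: smooth base change for étale cohomology, valid because both $k$ and $K$ are separably closed, shows that $H^{m-1}_\et(X,\Q_\ell/\Z_\ell(n))\to H^{m-1}_\et(X_K,\Q_\ell/\Z_\ell(n))$ is an isomorphism in every bidegree, hence the torsion subgroups of the étale Chow groups are unaffected by base change; the torsion-free part is handled by the rational statement (Vial, Lemma~3.2). Combining the two along the torsion/torsion-free exact sequence used in the proof of Proposition~\ref{chafie} gives surjectivity of $f_*$ on $\CH^i_\et(M)\to\CH^i_\et(N)$ directly, without ever needing $f$ or $f_K$ to be surjective as a morphism of motives. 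You should rework part (2) along these lines, or, if you want to keep the motivic reformulation, first prove the implication from ``surjective on $\CH^*_\et(-_K)$'' to ``surjective morphism over $K$'' — which, as noted, is the content of Lemma~\ref{surj} and requires $K$ to be a universal domain, a hypothesis not present in this proposition.
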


\begin{proof}
The first statement follows from the commutative diagram
\begin{equation*}
  \begin{tikzcd}
\CH^i_\et(M) \arrow{r} \arrow{d}{f_*} & \CH^i_\et(M_K) \arrow{d}{(f_K)_*}  \\
\CH^i_\et(N) \arrow{r} & \CH^i_\et(N_K) 
  \end{tikzcd}
\end{equation*}
and the fact that $\CH^i_\et(X)\to \CH^i_\et(X_K)$ is an injection by 
Proposition \ref{chafie}. For the surjectivity, notice that under assumptions about the base field, the map $ H^{m-1}_\et(X,\Q_\ell/\Z_\ell(n)) \to H^{m-1}_\et(X_K,\Q_\ell/\Z_\ell(n))$ is an isomorphism for every bidegree, therefore if the map after tensor with the rational is surjective ( which is the result of \cite[Lemma 3.2]{Via}), we then obtain that the map is surjective from a similar argument of Proposition \ref{chafie}.
\end{proof}

\begin{lemma}
    Let $M=(X,p,m)$ be an étale motive over an algebraically closed field $k$. Then $M=0$ if and only if $M_K=0$ for some field extension $K$.
\end{lemma}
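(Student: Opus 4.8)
The forward implication is immediate: $(-)_K\colon\ch_\et(k)\to\ch_\et(K)$ is an additive functor, hence carries a zero object to a zero object. The content lies in the converse, and the plan is to reduce — via Manin's identity principle (Proposition~\ref{maniet}) — to a statement about étale Chow groups, and then invoke the injectivity of base change established in Proposition~\ref{chafie}.

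First I would reduce to the case where $K$ is algebraically closed. If $M_K=0$, then base changing along $K\hookrightarrow\bar K$ gives $M_{\bar K}=(M_K)_{\bar K}=0$; since $k=\bar k\subseteq\bar K$, we may simply replace $K$ by $\bar K$. So assume henceforth $K=\bar K$.

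Now I would translate the vanishing of $M$ through $\omega$. Applying Proposition~\ref{maniet}(1) to the morphisms $\mathrm{id}_M,0\colon M\to M$, and using functoriality and additivity of $\omega$ (so that $\omega_{\mathrm{id}_M}(Y)=\mathrm{id}_{\omega_M(h(Y))}$ and $\omega_0(Y)=0$), one obtains that $M=0$ if and only if $\omega_M(h(Y))=0$ for every $Y\in\text{SmProj}_k$. Since $\omega_M(h(Y))=F(M\otimes h_\et(Y))=\bigoplus_{i\in\Z}\CH^i_\et(M\otimes h_\et(Y))$, it suffices to show $\CH^i_\et(M\otimes h(Y))=0$ for all $Y$ and all $i$. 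Fix such $Y,i$ and write $M=(X,p,m)$; then $M\otimes h(Y)$ is the summand of the motive of the smooth projective $k$-variety $X\times Y$ cut out by an idempotent correspondence $\pi$, so that $\CH^i_\et(M\otimes h(Y))=\pi_*\CH^{i+m}_\et(X\times Y)$. By Proposition~\ref{chafie}(1) (both $k$ and $K$ being algebraically closed) the map $\CH^{i+m}_\et(X\times Y)\to\CH^{i+m}_\et((X\times Y)_K)$ is injective; since base change commutes with the action of correspondences, in particular with $\pi$ (whose base change is the idempotent defining $M_K\otimes h(Y_K)$), it restricts to an injection $\CH^i_\et(M\otimes h(Y))\hookrightarrow\CH^i_\et(M_K\otimes h(Y_K))$. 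But $M_K=0$, so the target vanishes, and hence so does the source. As $Y$ and $i$ were arbitrary, $M=0$. Equivalently, this last step is the preceding proposition — the étale analogue of \cite[Lemma 3.2]{Via} — applied to the zero morphism $M\otimes h(Y)\to 0$.

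The one point that deserves care is the transition from Proposition~\ref{chafie}(1), phrased for the Chow groups of a smooth projective variety, to those of the motive $M\otimes h(Y)$: this is the formal fact that base change is a map of graded modules over the ring of étale self-correspondences of $X\times Y$ and therefore preserves images of idempotents. Everything else is bookkeeping, so I expect no substantial difficulty.
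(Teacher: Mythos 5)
Your proof is correct, but it takes a genuinely different route from the paper's. The paper dispatches this lemma in one line: the base-change functor $p^*\colon\text{DM}_\et(k,\Z)\to\text{DM}_\et(K,\Z)$ is conservative (Ayoub's separatedness theorem, recalled in the preliminaries), so $p^*M=M_K=0$ forces $M=0$. You instead stay entirely inside $\ch_\et$ and run the argument through Manin's identity principle (Proposition~\ref{maniet}): translating $M=0$ into the vanishing of all $\CH^i_\et(M\otimes h(Y))$, then using Proposition~\ref{chafie}(1) (injectivity of base change on Lichtenbaum Chow groups of smooth projective varieties over algebraically closed fields) and compatibility of base change with the idempotent $\pi$ to conclude. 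Both arguments are sound; the paper's is shorter and relies on a heavier abstract input, while yours is more elementary and makes the mechanism visible at the level of cycle groups, and in fact it amounts to applying the immediately preceding proposition in the paper (the étale analogue of Vial's Lemma~3.2) to the zero morphism, as you note. One small observation: your reduction to $K=\bar K$ is needed precisely because Proposition~\ref{chafie}(1) is only stated for algebraically closed source and target, whereas the paper's conservativity argument works for an arbitrary extension; your proof handles this cleanly.
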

\begin{proof}
   This is a direct consequence of the fact for a field extension $p:K \to k$ the induced functor $p^*:\text{DM}_\et(k,\Z) \to \text{DM}_\et(K,\Z)$ is conservative.
\end{proof}

\begin{remark}
    Notice that \cite[Proposition 1.3]{Via} is also a direct consequence of the condition of separateness of $\text{DM}_\et(k,\Q)$ and the ful-faithful embedding $\ch(k)^{\text{op}}\to\text{DM}(k,\Q)\simeq \text{DM}_\et(k,\Q)$.
\end{remark}

In this subsection we aim to obtain an analogue of \cite[Lemma 1.1]{Huy} for the category $\ch_\et(k)$. Roughly speaking, this result is an improved version of Manin's principle, but only when one works over an algebraically closed field. 

Manin principle says that a morphism $f: M\to N$ between Chow motives is an isomorphism if and only if the associated map $(f\times \text{id}_Z)_*: \CH^*(M \otimes h(Z))_\Q \to \CH^*(N \otimes h(Z))_\Q$ is an isomorphism for every smooth projective variety $Z$. There are few cases where the structure of the Chow groups are maintained in an easy way, such as projective bundles or blow-ups, but in general it is not an easy task to obtain this property. Recalling that a universal domain $\Omega$ over $k$ is an algebraically closed field extension of infinite transcendence degree (for example $k=\bar{\Q}$ and $\Omega=\C$), the improved Manin priciple states the following:

\begin{theorem}[{\cite[Lemma 1.1]{Huy}}]\label{imanin}
Consider an algebraically closed field $k$. Let $f:M\to N$ be a morphism in the category $\ch(k)_\Q$. Then $f$ is an isomorphism of motives in $\ch(k)_\Q$ if and only if for $\Omega$ a universal domain over $k$, the induced map $(f_\Omega)_*:\CH^*(M_\Omega)_\Q\to\CH^*(N_\Omega)_\Q$ given by the base change $f_\Omega: M_\Omega \to N_\Omega$, is bijective.
\end{theorem}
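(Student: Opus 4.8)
The plan is as follows. The direction ``$\Rightarrow$'' is immediate: extension of scalars $-\otimes_k\Omega\colon\ch(k)_\Q\to\ch(\Omega)_\Q$ is a tensor functor, hence carries the isomorphism $f$ to an isomorphism $f_\Omega$, which induces a bijection on every $\CH^*(-)_\Q$. For ``$\Leftarrow$'' I would first apply the classical Manin identity principle (the analogue over $\ch(k)_\Q$ of Proposition~\ref{maniet}; see \cite{MNP}), which reduces the claim to showing that for every smooth projective $k$-variety $Z$ the map
\begin{align*}
(f\times\mathrm{id}_Z)_*\colon \CH^*(M\otimes h(Z))_\Q \longrightarrow \CH^*(N\otimes h(Z))_\Q
\end{align*}
is bijective. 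Everything then hinges on turning the single hypothesis on $\CH^*(-_\Omega)_\Q$ into information about this whole family.

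The basic toolkit is the spreading-out presentation $\CH^*(M_\Omega)_\Q=\varinjlim_{K}\CH^*(M_K)_\Q$, the colimit running over the finitely generated subfields $K\subseteq\Omega$ containing $k$; this is the one place where the hypothesis that $\Omega$ is a \emph{universal domain} enters, since it guarantees that every finitely generated extension of $k$ embeds into $\Omega$. Combined with the injectivity $\CH^*(-_F)_\Q\hookrightarrow\CH^*(-_L)_\Q$ for an arbitrary field extension $L/F$ (Proposition~\ref{chafie}(1) and \cite[Proposition~1.5]{Via}, plus a transfer argument to reduce the general case to the algebraically closed one), this gives for free that $(f_F)_*\colon\CH^*(M_F)_\Q\to\CH^*(N_F)_\Q$ is \emph{injective} for every finitely generated $F/k$, compatibly through the embeddings into $\CH^*(M_\Omega)_\Q$.

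The heart of the argument is to prove, by induction on $\dim Z$, that $(f\times\mathrm{id}_Z)_*$ is \emph{surjective} for every smooth projective $Z/k$. The input is that surjectivity of $(f_\Omega)_*$ forces $(f_L)_*\colon\CH^*(M_L)_\Q\to\CH^*(N_L)_\Q$ to be surjective for every algebraically closed $L\subseteq\Omega$: one spreads out a preimage of a given class and, $L$ being algebraically closed, specializes the spread-out cycle along an $L$-rational point of a smooth model on which it still maps to the given class. Applying this with $L=\overline{k(Z)}$ to the restriction of a class $\beta\in\CH^*(N\otimes h(Z))_\Q$ to the generic point of $Z$, one obtains a preimage living over a finitely generated subextension $k(Z)\subseteq k(U)\subseteq\overline{k(Z)}$ with $\dim U=\dim Z$, i.e.\ over the function field of a generically finite cover $U\to Z$; transporting it back to $Z$ by proper pushforward along $U\to Z$ (and dividing by the degree) expresses $\beta$ as a class in the image of $(f\times\mathrm{id}_Z)_*$ plus an error term supported on a proper closed $W\subsetneq Z$, which one kills by choosing a smooth projective alteration $\tilde W\to W$ with $\dim\tilde W<\dim Z$ and invoking the inductive hypothesis on $\tilde W$. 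Once surjectivity of every $(f\times\mathrm{id}_{Z'})_*$ is known, restricting to the generic point of $Z'$ upgrades the injectivity of the previous paragraph: $(f_F)_*$ is now \emph{bijective} for every finitely generated $F/k$.

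The last step is purely formal. On $\CH^*(M\otimes h(Z))_\Q$ use the niveau filtration by codimension of support over $Z$; since the correspondence defining $f$ acts diagonally on the $Z$-factor, $(f\times\mathrm{id}_Z)_*$ respects this filtration, which has length $\le\dim Z$, and the standard computation of the associated graded (iterated localization sequences, harmless with $\Q$-coefficients) identifies
\begin{align*}
\mathrm{gr}^j\,\CH^*(M\otimes h(Z))_\Q \;\cong\; \bigoplus_{z\in Z^{(j)}} \CH^*(M_{k(z)})_\Q,
\end{align*}
on which $(f\times\mathrm{id}_Z)_*$ induces $\bigoplus_z (f_{k(z)})_*$, bijective by the previous paragraph since each $k(z)$ is finitely generated over $k$. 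A map of finitely filtered objects that is bijective on the associated graded is bijective (five lemma), so $(f\times\mathrm{id}_Z)_*$ is bijective and $f$ is an isomorphism by Manin. I expect the main obstacle to be the surjectivity induction: orchestrating the spreading-out, the generically finite cover coming from $\overline{k(Z)}$, and the alteration of the residual support so that every map in sight stays genuinely induced by the single morphism $(f_\Omega)_*$ is where all the geometry lives, the remaining steps being bookkeeping that works precisely because one is in $\ch(k)_\Q$.
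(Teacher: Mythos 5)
Your proposal gets the first three steps essentially right — base-change injectivity via spreading-out and Proposition~\ref{chafie}(1), then the surjectivity induction over $\dim Z$ by specialisation along an $\overline{k(Z)}$-point and an alteration of the residual support; this is the same geometry as \cite[Lemma 1]{GG} and \cite[Theorem 3.18]{Via}, which are exactly the ingredients the paper cites. The gap is in the ``purely formal'' finale. The associated graded of the niveau filtration on $\CH^*(M\otimes h(Z))_\Q$ is \emph{not} $\bigoplus_{z\in Z^{(j)}}\CH^*(M_{k(z)})_\Q$: the surjection
\begin{align*}
\bigoplus_{z\in Z^{(j)}}\CH^{*}(M_{k(z)})_\Q \twoheadrightarrow \mathrm{gr}^j\,\CH^*(M\otimes h(Z))_\Q
\end{align*}
has a genuine kernel, coming from the boundary map of the localisation sequence, i.e.\ from rational equivalences over lower-codimension strata (equivalently from higher Chow groups $\CH^*(M_{k(z')},1)_\Q$). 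Already for $Z=\Pro^1$ the classes $j_{z_1*}\gamma$ and $j_{z_2*}\gamma$ are rationally equivalent while having distinct images in $\bigoplus_z\CH^*(M)_\Q$. Since the hypothesis controls only $\CH^*$ and not the higher terms, you cannot conclude that $(f\times\mathrm{id}_Z)_*$ is bijective on $\mathrm{gr}^j$, so the five-lemma step breaks.

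The way to close this — and it is the route the paper takes in the \'etale analogue, Lemmas~\ref{kim6.8}, \ref{lemmM0}, \ref{surj}, \ref{lemsurj} and Theorem~\ref{immanin} — is not to aim at bijectivity of every $(f\times\mathrm{id}_Z)_*$ at all. Once you know $(f\times\mathrm{id}_Z)_*$ is \emph{surjective} for every $Z$, invoke Kimura's criterion (\cite[Theorem 6.8]{Kimu}, whose \'etale analogue is Lemma~\ref{kim6.8}): $f$ then admits a right inverse $g:N\to M$, so $M\cong N\oplus T$ with $T=\ker(gf)$. Bijectivity of $(f_\Omega)_*$ forces $\CH^*(T_\Omega)_\Q=0$, and then \cite[Lemma 1]{GG} (analogue: Lemma~\ref{lemmM0}) gives $T=0$, hence $f$ is an isomorphism. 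Injectivity of the various $(f\times\mathrm{id}_Z)_*$ is obtained for free at the end, rather than being the thing one proves; this sidesteps the niveau filtration entirely.
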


Therefore for an algebraically closed field, is not necessary to test a morphism indexed by the objects in $\text{SmProj}_k$, only for a huge field extension of $k$. The improved version of Manin principle is a direct consequence of the results \cite[Lemma 1]{GG}, \cite[Theorem 3.18]{Via} and \cite[Lemma 2.4]{BoPe}. 
\begin{example}
    Consider a conic bundle $X \to \Pro^2_{\bar{Q}}$, we have that the Chow groups of $X$ are characterized by
    \begin{align*}
        \CH^0(X)_\Q\simeq \CH^3(X)_\Q \simeq \Q, \ \CH^1(X)\simeq \Q \oplus \Q  \text{ and } \CH^2(X)\simeq \Q \oplus \Q \oplus \text{Prym}(\bar{C}/C)_\Q, 
    \end{align*}
so for this case we can recover the motivic decomposition of $X$ obtained in \cite{NS}. In this context, $C$ is called the discriminant curve of $X$, $\sigma_C:\bar{C}\to C$ is a double covering and $\text{Prym}(\bar{C}/C)$ is the Prym variety.
\end{example}

In the following, we will present the analogue of \cite[Lemma 1.1]{Huy} for the category $\ch_\et(k)$. To obtain that, we will prove the analogue of \cite[Lemma 1]{GG}:

\begin{lemma}\label{lemmM0}
    Let $M=(X,p,m)$ be an étale Chow motive defined over an algebraically closed field $k$. Let $\Omega$ be a universal domain of $k$  and assume that $\CH_\et^i(M_\Omega)=0$ for all $i\geq 0$. Then $M\simeq 0$ in $\ch_\et(k)$.
\end{lemma}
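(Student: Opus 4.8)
The plan is to transport the question to the universal domain $\Omega$ and to run Manin's identity principle there. Because $\Omega/k$ is a field extension, the pullback $\text{DM}_\et(k,\Z)\to\text{DM}_\et(\Omega,\Z)$ is conservative, so by the lemma preceding the statement it is enough to show that $M_\Omega\simeq 0$ in $\ch_\et(\Omega)$. Applying Proposition~\ref{maniet} over the base field $\Omega$, this in turn amounts to proving that $\CH^i_\et(M_\Omega\otimes h_\et(Y))=0$ for every $Y\in\text{SmProj}_\Omega$ and every $i$. Thus the whole content is the implication: if $\CH^i_\et(M_\Omega)=0$ for all $i$, then $\CH^i_\et(M_\Omega\otimes h_\et(Y))=0$ for all such $Y$ and $i$ -- the étale/Lichtenbaum analogue of the main step of \cite[Lemma~1]{GG}.

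I would establish this implication by induction on $n=\dim Y$. For $n=0$, since $\Omega=\bar\Omega$ the reduced scheme underlying $Y$ is a finite disjoint union of copies of $\spc(\Omega)$, so $M_\Omega\otimes h_\et(Y)$ is a finite sum of copies of $M_\Omega$ and the assertion is the hypothesis. For $n>0$, fix $Y$ of dimension $n$. Restricting cycles to the generic fibre of $X_\Omega\times Y\to Y$ identifies $\CH^i_\et(M_{\Omega(Y)})$ with the filtered colimit, over the dense opens $U\subseteq Y$, of the corresponding groups on $X_\Omega\times U$; the localization sequence in étale motivic cohomology then reduces the computation of $\CH^i_\et(M_\Omega\otimes h_\et(Y))$ to that of $\CH^i_\et(M_{\Omega(Y)})$ together with contributions supported on the $X_\Omega\times Z$ for the closed complements $Z\subsetneq Y$. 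Replacing each such $Z$ by a smooth projective alteration $\widetilde Z$ of dimension $<n$ (de~Jong, together with Gabber's prime-to-$\ell$ refinement to control the $\ell$-primary torsion, in the spirit of the arguments of \cite{RS}), those contributions are governed by $\CH^\bullet_\et(M_\Omega\otimes h_\et(\widetilde Z))$ and vanish by the inductive hypothesis. Hence the induction step comes down to showing $\CH^i_\et(M_{\Omega(Y)})=0$.

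For this last point the fact that $\Omega$ is a universal domain is exactly what is needed: the field $L:=\overline{\Omega(Y)}$ is algebraically closed with the same (infinite) transcendence degree over $k$ as $\Omega$, hence is $k$-isomorphic to $\Omega$, so $\CH^i_\et(M_L)\simeq\CH^i_\et(M_\Omega)=0$. One is then left to check that $\CH^i_\et(M_{\Omega(Y)})\to\CH^i_\et(M_L)$ is injective; on the torsion-free part this is the transfer argument already used in Proposition~\ref{chafie} (following \cite[Lemma~1.2]{Via}), while on the torsion part -- which is the delicate point -- one uses the comparison between Lichtenbaum cohomology and $\ell$-adic étale cohomology from \cite[Theorem~1.1]{RS}, together with the structural results of \cite{roso23}, to identify the torsion with étale-cohomological groups on which base change to $L$ is injective. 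Granting this, $\CH^i_\et(M_{\Omega(Y)})=0$, the induction closes, and unwinding the reductions gives $M\simeq 0$ in $\ch_\et(k)$. The main obstacle is precisely this integral (torsion) control in the generic-fibre step: the rational Chow groups used in \cite[Lemma~1]{GG} behave well under algebraic extensions of the function field, whereas their Lichtenbaum analogues do not, which is what forces the use of alterations in the localization step and of the comparison theorems of \cite{RS}.
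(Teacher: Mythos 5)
Your proposal takes a genuinely different route from the paper's in the key step. The paper runs the induction over the base field $k$ itself, following the shape of \cite[Lemma~1]{GG}: after localization reduces the problem to $\CH^i_\et(M_{k(Y)})$, it handles the torsion-free part via the rational statement and then uses a specialization argument to a closed $k$-point of a spreading-out $U\subset Y$ to show that $p\otimes k(Y)$ acts as zero. You instead push everything to $\Omega$ via conservativity and, at the crucial step, replace the function field $\Omega(Y)$ by its algebraic closure $L=\overline{\Omega(Y)}$ and invoke the $k$-isomorphism $L\cong\Omega$ available because $\Omega$ is a universal domain. You also introduce alterations to control the non-smooth closed complements, which the paper does not discuss.

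The gap is the injectivity you need at the end: $\CH^i_\et(M_{\Omega(Y)})\to\CH^i_\et(M_L)$. This fails in general, and your proposed remedy does not work. Proposition~\ref{chafie} in the paper only proves injectivity of base change for Lichtenbaum groups when \emph{both} source and target fields are algebraically closed; here $\Omega(Y)$ is not. On the torsion side, the comparison with \cite[Theorem~1.1]{RS} identifies $\CH^i_L(\cdot)_{\mathrm{tors}}$ (away from the characteristic) with $H^{2i-1}_\et(\cdot,\Q/\Z(i))$, but the restriction map
\begin{align*}
H^{2i-1}_\et(X_{\Omega(Y)},\Q/\Z(i))\longrightarrow H^{2i-1}_\et(X_L,\Q/\Z(i))
\end{align*}
is not injective: its kernel contains the contributions from the Hochschild--Serre spectral sequence for $\mathrm{Gal}(L/\Omega(Y))$, i.e.\ genuine Galois cohomology of the non-closed field $\Omega(Y)$. (Already for $X=\spc(\Omega(Y))$ the target is trivial while $H^1_\et(\spc(\Omega(Y)),\mu_\ell)\neq 0$.) So identifying $L$ with $\Omega$ does not transport the vanishing back along the inclusion $\Omega(Y)\hookrightarrow L$; this is precisely why the paper works with a specialization map (from the generic fibre to a closed fibre over $k$) rather than with base change to the algebraic closure of the function field. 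To repair your approach at this step you would need an argument that genuinely kills the Galois-cohomological part, which the cited comparison theorems do not supply.
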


\begin{proof}
    We proceed with similar arguments as in \cite[Lemma 1]{GG}. Consider $Y \in \text{SmProj}_k$ and let $i:Z\hookrightarrow Y$ be a smooth closed immersion of codimension $c_Z$ and let $U:= Y-Z$ be the open complement
\begin{align*}
     \ldots \to \CH^{i-c_Z}_\et(X\times Z) \to \CH^i_\et(X\times Y)\to \CH^i_\et(X\times U)\to \text{Br}^{i-c_Z}(X\times Z)\to \ldots
\end{align*}
now take the direct limit  over opens $U\subset Y$ we obtain that 
\begin{align*}
     \ldots \to \bigoplus_{Z \subset Y} \CH^{i-c_Z}_\et(X\times Z) \to \CH^i_\et(X\times Y)\to \varinjlim_{U \subset Y} \CH^i_\et(X\times U)\to \bigoplus_{Z \subset Y} \text{Br}^{i-c_Z}(X\times Z)\to \ldots
\end{align*}
since we have the isomorphism $ \varinjlim_{U \subset Y} \CH^i_\et(X\times U) \simeq \CH^i_\et(X_{k(Y)})$ and consider the morphism (defined through the action of correspondences) $p\otimes Y:\CH^i_\et(X\times Y)\to \CH^i_\et(X\times Y)$ defined as follows
\begin{align*}
    (p\otimes Y )(\alpha):=(\text{pr}_{23})_*\left( \text{pr}_{13}^*(\alpha)\times p \cdot \Gamma_{\text{pr}_{12}} \right) 
\end{align*}
where $\text{pr}_{ij}:X\times X \times Y \to X_i\times X_j$ and $\Gamma_{\text{pr}_{12}}$ is the graph of the projection morphism. We apply the morphisms  $\bigoplus_Z p\otimes Z$, $p \otimes Y$ and $p\otimes k(Y)$ we then obtain the following exact sequence
\begin{align*}
    \bigoplus_{Z \subset Y} \text{im}(p\otimes Z)\to \text{im}(p\otimes Y) \to \CH^i_\et(M_{k(Y)}) \to \bigoplus_{Z \subset Y}\text{im}(p\otimes Z)_{-1}.
\end{align*}
Notice the following facts about the étale Chow groups of the motive $M$:
\begin{itemize}
    \item If $Y$ is irreducible with $\dim(Y)=0$, then $\text{im}(p\otimes Y)=\CH^i_\et(M)$, and consider  $\Omega$ a field extension of $k$ which is algebraically closed. Then we have that $\CH^i_\et(M)\to \CH^i_\et(M_\Omega)$ is injective, so by the hypothesis $\CH^i_\et(M)=0$  for all $i\geq 0$.
    \item By induction, assume that for $Z$ of dimension $0,\ldots,n-1$ we have that $\text{im}(p\otimes Z)$ vanish, then $\text{im}(p \otimes Y) $ injects in $\CH^i_\et(M_{k(Y)})$ by the localization sequence. By \cite[Lemma 1]{GG}, the action of $p \otimes k(Y)$ over the torsion free part of $\CH_\et^i(X_{k(Y)})$, then $\CH^i_\et(M_{k(Y)})\simeq (p \otimes k(Y))_* H^{2i-1}_\et(X_{k(Y)},(\Q/\Z)'(i))$. 
    
    To conclude we will use a specialization argument. Consider a open subset $U \subset Y$ and consider the motive $(X_U,p_U)$. Now let $u$ be a closed point of $U$ therefore we can define the regular embedding $j_u:u \hookrightarrow U$. Notice that the closed fibers of $U$ are isomorphic to  $(X,p)$ over $k$. Since the specialization map commutes with products, pull-backs and pushforwards, we obtain that the projector $p \otimes U$ acts as zero over $\CH^{i}_\et(X \times U)$, therefore we conclude that $p\otimes k(Y)$ acts as zero over $\CH^{i}_\et(X_{k(Y)})$. Finally, we conclude that $\CH^i_{\et}(M_{k(Y)})=0$ for all integer $i\geq 0$.


\end{itemize}
Since we have that $\CH^i_{\et}(M_{k(Y)})=0$ and 
  $\text{im}(p \otimes Y)$ injects into  $\CH^i_{\et}(M_{k(Y)})$ for all $Y \in \text{SmProj}_k$, since $\text{im}(p \otimes Y)\simeq \CH^i_\et(M\otimes h(Y))$ by the Manin principle for étale motives we can conclude that $M=0$.
\end{proof}

Along with Definition \ref{defsurj} and Lemma \ref{kim6.8} we obtain the analogue of \cite[Theorem 3.18]{Via} for étale motivic cohomology:

\begin{lemma}\label{surj}
    Let $f:M\to N$ be a morphism of motives over $k$ with $k=\bar{k}$,  $k \hookrightarrow \Omega$, with $\Omega$ an universal domain, such that the induced morphism $\left(f_\Omega\right)_*:\CH^*_\et(M_\Omega)\to \CH^*_\et(N_\Omega)$ is surjective. Then $f$ is surjective.
\end{lemma}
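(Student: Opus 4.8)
The plan is to reduce, via Lemma \ref{kim6.8} and Definition \ref{defsurj}, to checking that $(f\otimes\mathrm{id}_Y)_*$ is surjective for every $Y\in\text{SmProj}_k$, and then to transcribe the localization--specialization argument from the proof of Lemma \ref{lemmM0}, carrying the morphism $f$ along and feeding in the hypothesis over $\Omega$ as the base case of an induction on $\dim Y$. Concretely: by Lemma \ref{kim6.8}, surjectivity of $f$ (Definition \ref{defsurj}) is equivalent to the existence of a right inverse, and I will verify it in its original form, namely that for every $Y\in\text{SmProj}_k$ and every $n$ the map $(f\otimes\mathrm{id}_Y)_*\colon\CH^n_\et(M\otimes h(Y))\to\CH^n_\et(N\otimes h(Y))$ is surjective. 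Fix $Y$ and set $\tilde f:=f\otimes\mathrm{id}_Y$, a morphism of étale motives over $k$. The étale analogue of \cite[Lemma 3.2]{Via} established above (surjectivity of $(\tilde f_K)_*$ for some algebraically closed $K\supseteq k$ forces surjectivity of $\tilde f_*$ over $k$) lets us replace $\tilde f$ by $\tilde f_\Omega$. So it is enough to prove: for every smooth projective $Z$ over $\Omega$ and every $n$, the map $(f_\Omega\otimes\mathrm{id}_Z)_*\colon\CH^n_\et(M_\Omega\otimes h(Z))\to\CH^n_\et(N_\Omega\otimes h(Z))$ is surjective; the hypothesis is exactly this statement for $Z=\spc(\Omega)$.

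I would prove this by induction on $d=\dim Z$. The case $d=0$ is the hypothesis, since then $Z$ is a finite disjoint union of copies of $\spc(\Omega)$, so $M_\Omega\otimes h(Z)\simeq M_\Omega^{\oplus r}$ and $(f_\Omega\otimes\mathrm{id}_Z)_*$ is the corresponding sum of copies of $(f_\Omega)_*$. For the inductive step I would reuse the exact sequence from the proof of Lemma \ref{lemmM0}: passing to the direct limit over the dense opens $U\subset Z$ of the localization sequences for étale Chow groups and applying the projectors $p_\Omega\otimes(-)$ and $q_\Omega\otimes(-)$ produces, functorially in the projector and hence in $f_\Omega$, a commutative ladder relating $\CH^*_\et(M_\Omega\otimes h(Z))\simeq\mathrm{im}(p_\Omega\otimes Z)$ to the lower-dimensional contributions $\CH^*_\et(M_\Omega\otimes h(\widetilde V))$, where $\widetilde V\to V\subset Z$ resolves a proper closed subvariety so that $\dim\widetilde V<d$, and to the generic term $\CH^*_\et(M_{\Omega(Z)})$; and similarly for $N$. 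By the induction hypothesis the maps on the lower-dimensional pieces are surjective, so a diagram chase reduces the inductive step to proving that $\CH^*_\et(M_{\Omega(Z)})\to\CH^*_\et(N_{\Omega(Z)})$ is surjective.

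For this last surjectivity I would argue as in the specialization step of Lemma \ref{lemmM0}. The torsion-free part is controlled by the rational statement \cite[Theorem 3.18]{Via} together with \cite[Lemma 3.2]{Via} applied over $\overline{\Omega(Z)}$, which is again a universal domain over $k$ of the same (infinite) transcendence degree, hence isomorphic to $\Omega$ over $k$, so that $(f_{\overline{\Omega(Z)}})_*$ is surjective by hypothesis. For the torsion part one identifies $\CH^*_\et(-)_{\mathrm{tors}}$ with a summand of étale cohomology with $(\Q/\Z)'(\ast)$-coefficients, spreads $(X_\Omega,p_\Omega)$ and the data underlying $N_\Omega$ out over an open $U\subset Z$, and specializes at closed points $u\in U$, each of which has residue field $\Omega$; since $(f_\Omega)_*$ is surjective on $\CH^*_\et(M_\Omega)=\CH^*_\et(M_{\Omega(u)})$, every specialization of a class over $\Omega(Z)$ lifts, and one globalizes these lifts using the continuity and rigidity of torsion étale cohomology exactly as in \cite[Lemma 1]{GG}. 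This closes the induction, and by the first two paragraphs $f$ is surjective in the sense of Definition \ref{defsurj}; Lemma \ref{kim6.8} then also yields, if wanted, an explicit right inverse of $f$.

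The main obstacle is the treatment of the generic term $\CH^*_\et(\,\cdot_{\Omega(Z)})$: globalizing the pointwise lifts in the specialization argument for the torsion classes, descending Vial's rational surjectivity to the integral torsion-free part, and arranging that the localization sequence stays exact enough for the diagram chase to go through. The other ingredients — the reduction through Lemma \ref{kim6.8} and Definition \ref{defsurj}, the base case, and the bookkeeping of the localization sequence — are essentially a transcription of material already in the paper.
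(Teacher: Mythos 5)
Your proposal follows the paper's own strategy: reduce via Lemma \ref{kim6.8} to surjectivity of $(f\otimes\mathrm{id}_Z)_*$ for all $Z$, and establish that by induction on $\dim Z$ using the localization sequence and a diagram chase, with the crux being the generic-fiber term $\CH^*_\et(\cdot_{k(Z)})$. The one place you diverge is in handling that generic term — you propose a spreading-out/specialization argument combined with an abstract isomorphism $\overline{\Omega(Z)}\cong\Omega$ of universal domains, whereas the paper argues more directly via rigidity of the torsion of Lichtenbaum cohomology over algebraically closed fields (factoring $\CH^*_\et(X_K)_{\mathrm{tors}}\to\CH^*_\et(X_{\bar K})_{\mathrm{tors}}\xrightarrow{\sim}\CH^*_\et(X_\Omega)_{\mathrm{tors}}$ through the isomorphism $\CH^*_\et(X)_{\mathrm{tors}}\xrightarrow{\sim}\CH^*_\et(X_\Omega)_{\mathrm{tors}}$) — but this is a variant of the same rigidity input rather than a genuinely different route.
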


\begin{proof}
    Let $f$ be a morphism of motives over $k$ and let $\Omega$ be a universal domain such that $k \hookrightarrow \Omega$. Consider $Z \in \text{SmProj}_k$, we will prove that the morphism $f\otimes Z : \CH^*_\et(X\times Z) \to \CH^*_\et(Y\times Z)$ has the same image as $q\otimes Z : \CH^*_\et(Y\times Z) \to \CH^*_\et(Y\times Z)$, for all $Z$. In order to prove this, we proceed by induction over the dimension of $Z$.

    If $\text{dim}(Z)=0$ then the result is clear. So let us assume that works for $\text{dim}(Z)\leq n$. Consider the following commutative diagram induced by the localization sequence
    \begin{equation*}
  \begin{tikzcd}
\ldots\arrow{r} &\bigoplus_{D \subset Z}\CH^*_\et(X \times D) \arrow{r} \arrow{d}{\bigoplus f\otimes D}&\CH^*_\et(X \times Z) \arrow{r} \arrow{d}{f\otimes Z} & \CH^*_\et(X_K) \arrow{d}{(f_K)_*} \arrow{r}  & \ldots\\
\ldots\arrow{r} &\bigoplus_{D \subset Z}\CH^*_\et(Y \times D) \arrow{r} &\CH^*_\et(Y \times Z) \arrow{r} & \CH^*_\et(Y_K) \arrow{r} & \ldots
  \end{tikzcd}
\end{equation*}
where $K=k(Z)$. By assumption, we have that the map $\left(f_\Omega\right)_*:\CH^*_\et(X_\Omega)\to \CH^*_\et(Y_\Omega)$ is surjective, then we have that in the level of torsion $\CH_\et^*(X)_\text{tors} \to \CH_\et^*(X_\Omega)_\text{tors}$ is an isomorphism, so the map $\CH_\et^*(X_K)_\text{tors} \to \CH^*_\et(X_{\bar{K}})_\text{tors} \xrightarrow{\sim} \CH_\et^*(X_\Omega)_\text{tors}$ is surjective because it factors through the previous map. This gives us that the map $\left(f_K\right)_*:\CH^*_\et(X_K)_{\text{tors}}\to \CH^*_\et(Y_K)_{\text{tors}}$ is surjective, and then also the induced map $\left(f_K\right)_*:\CH^*_\et(X_K)\to \CH^*_\et(Y_K)$.

By induction hypothesis $f\otimes D$ it has the same image as $q \otimes D$, for all $D\subset Z$. In the same way, we have a similar commutative diagram involving $\bigoplus_{D\subset Y} q \otimes D$, $q \otimes Z$ and $q \otimes K$.

    \begin{equation*}
  \begin{tikzcd}
\ldots\arrow{r} &\bigoplus_{D \subset Z}\CH^*_\et(Y \times D) \arrow{r} \arrow{d}{\bigoplus q\otimes D}&\CH^*_\et(Y \times Z) \arrow{r} \arrow{d}{q\otimes Z} & \CH^*_\et(Y_K) \arrow{d}{(q_K)_*} \arrow{r}  & \ldots\\
\ldots\arrow{r} &\bigoplus_{D \subset Z}\CH^*_\et(Y \times D) \arrow{r} &\CH^*_\et(Y \times Z) \arrow{r} & \CH^*_\et(Y_K) \arrow{r} & \ldots
  \end{tikzcd}
\end{equation*}

Finally, as we have $\text{im}(q \otimes Z) =\text{im}(f \otimes Z)$, then $(f \times \text{id}_Z)_*:\CH^*_\et(M\otimes h_\et(Z)) \to \CH^*_\et(N\otimes h_\et(Z))$ is a surjective map for all $Z$ smooth projective variety, therefore by Lemma \ref{kim6.8} we have that $f$ is surjective.

\end{proof}

Finally, we can get the extension to the integral étale case of \cite[Lemma 1.1]{Huy} using the following lemma:

\begin{lemma}\label{lemsurj}
    Let $f:M\to N$ be a morphism of motives over $k$ such that for a universal domain $\Omega$, the induced morphism $\left(f_\Omega\right)_*:\CH^i_\et(M_\Omega) \to \CH^i_\et(N_\Omega)$ is an isomorphism for all $i\geq 0$. Then $f_\Omega$ is an isomorphism in the category $\ch_\et(\Omega)$.
\end{lemma}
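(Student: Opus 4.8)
First I would reduce to the case $k=\bar k$: since $\bar k/k$ is algebraic, the given universal domain $\Omega$ of $k$ is also a universal domain of $\bar k$, and because base change is transitive it is enough to treat $f_{\bar k}\colon M_{\bar k}\to N_{\bar k}$, which still satisfies the hypothesis (its base change to $\Omega$ is $f_\Omega$). So assume $k=\bar k$. The plan is then to exhibit $f$ (over $k$) as the projection onto a direct summand and to kill the complementary summand with the vanishing criterion. Since $(f_\Omega)_*$ is an isomorphism it is in particular surjective, so Lemma~\ref{surj} shows that $f$ is surjective in the sense of Definition~\ref{defsurj}; Lemma~\ref{kim6.8} then produces a section $g\colon N\to M$ with $f\circ g=\text{id}_N$, defined over $k$.

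Next I would form the idempotent $e:=\text{id}_M-g\circ f$ (one checks directly that $e^2=e$ and $e\circ(g\circ f)=(g\circ f)\circ e=0$). As the category of étale Chow motives over $k$ is idempotent complete, this yields a decomposition $M\simeq N'\oplus M'$ with $N':=(M,g\circ f)$ and $M':=(M,e)$; moreover $f$ and $g$ identify $N'$ with $N$, and under the resulting isomorphism $M\simeq N\oplus M'$ the map $f$ is the projection onto $N$, since $f\circ g=\text{id}_N$ while $f\circ e=f-(f\circ g)\circ f=0$. Base changing to $\Omega$ and using that $\CH^i_\et$ is additive in the motive variable, I get $\CH^i_\et(M_\Omega)\simeq\CH^i_\et(N_\Omega)\oplus\CH^i_\et(M'_\Omega)$ with $(f_\Omega)_*$ the first projection, for every $i\ge 0$. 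As $(f_\Omega)_*$ is assumed to be an isomorphism, this forces $\CH^i_\et(M'_\Omega)=0$ for all $i\ge 0$. Finally, $M'$ is defined over $k$ and $\Omega$ is a universal domain of $k$, so Lemma~\ref{lemmM0} applies and gives $M'\simeq 0$ in $\ch_\et(k)$; hence $M'_\Omega\simeq 0$ and $f_\Omega$ is an isomorphism (in fact $f$ is already an isomorphism in $\ch_\et(k)$).

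There is no real obstacle left once Lemmas~\ref{surj} and~\ref{lemmM0} are available — they carry all the weight, and the remainder is the routine idempotent-completion bookkeeping. The only point that needs care is that the section $g$, and hence the complementary idempotent $e$ and the summand $M'$, can be chosen over the base field $k$ rather than merely over $\Omega$; this is exactly what permits Lemma~\ref{lemmM0} to be invoked with $\Omega$ itself as the universal domain. If one instead started from a morphism defined only over $\Omega$, one would have to pass to a larger universal domain $\Omega'\supset\Omega$ and argue that the vanishing $\CH^i_\et(M'_\Omega)=0$ propagates to $\CH^i_\et(M'_{\Omega'})=0$, a statement strictly stronger than the injectivity of base change recorded in Proposition~\ref{chafie}.
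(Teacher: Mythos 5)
Your proof is correct and, in fact, more careful than the paper's own argument, which it follows in outline (surjectivity via Lemma~\ref{surj}, a section via Lemma~\ref{kim6.8}, idempotent decomposition, and the vanishing criterion Lemma~\ref{lemmM0}). The salient difference is \emph{where} you perform the splitting. The paper produces a section $g\colon N_\Omega\to M_\Omega$ and a complementary summand $T$ as a motive over $\Omega$, and then invokes Lemma~\ref{lemmM0} on $T$; but Lemma~\ref{lemmM0} requires the vanishing of $\CH^i_\et$ after base change to a universal domain of the field over which the motive lives, and $\Omega$ is not a universal domain of itself (its transcendence degree over $\Omega$ is $0$). So, as you observe at the end, the paper's reading would require promoting the vanishing $\CH^i_\et(T)=0$ over $\Omega$ to vanishing over some larger $\Omega'\supset\Omega$, which does not follow from the injectivity of base change in Proposition~\ref{chafie}. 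Your version sidesteps this cleanly: after the legitimate reduction to $k=\bar k$ (valid because $\Omega$ remains a universal domain of $\bar k$ and $(f_{\bar k})_\Omega=f_\Omega$), the section $g$, the idempotent $e=\mathrm{id}_M-g\circ f$, and the complement $M'=(M,e)$ are all defined over $\bar k$, so $\Omega$ is a genuine universal domain for $M'$ and Lemma~\ref{lemmM0} applies as stated. You also get the slightly stronger conclusion that $f$ is already an isomorphism over $\bar k$, which is in any case what the subsequent Theorem~\ref{immanin} uses. The only minor imprecision is the parenthetical ``$f$ is already an isomorphism in $\ch_\et(k)$'': after the reduction step you have only shown this over $\bar k$, not over the original $k$; over $k$ one would need the conservativity argument that the paper reserves for Theorem~\ref{immanin}.
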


\begin{proof}
    Let $\Omega$ be an universal domain of $k$.  By assumption, we have an isomorphism 
    $\left(f_\Omega\right)_*:\CH^i_\et(M_\Omega) \xrightarrow{\simeq} \CH^i_\et(N_\Omega)$, so by Lemma \ref{surj}, there exists a morphism $g: N_\Omega \to M_\Omega$ such that $f_{\Omega}\circ g = \text{id}_{N_\Omega}$. Therefore, we have a suboject of $M_\Omega$, denoted by $T$, and an isomorphism $f_\Omega:M_\Omega \to N_\Omega \oplus T$. Since $\CH^i_\et(M_\Omega) \xrightarrow{\simeq} \CH^i_\et(N_\Omega)$, then we obtain that $\CH^i_\et(T)\simeq 0$ for all $i \geq 0$, so invoking Lemma \ref{lemmM0}, we obtain that $T=0$, so we obtain that $f_\Omega : M_\Omega \to N_\Omega$ is an isomorphism.
\end{proof}

\begin{theorem}[Improved version of Manin's principle]\label{immanin}
Let $f:M\to N$ be a morphism in the category $\ch_\et(k)$. Then $f$ is a isomorphism of motives in $\ch_\et(k)$ if and only if for $\Omega$ an universal domain over $k$, the induced map $(f_\Omega)_*:\CH^*_\et(M_\Omega)\to\CH^*_\et(N_\Omega)$ given by the base change $f_\Omega: M_\Omega \to N_\Omega$, is bijective.
\end{theorem}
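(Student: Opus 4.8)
The plan is to assemble the statement from the three lemmas just proved, after noting that — as the hypotheses of Lemmas \ref{lemmM0}, \ref{surj}, \ref{lemsurj} and the introductory discussion make clear — the field $k$ is taken algebraically closed here. The forward implication is purely formal: base change $(-)_\Omega\colon\ch_\et(k)\to\ch_\et(\Omega)$ is a functor and $\CH^i_\et(-)$ is an additive functor on $\ch_\et(\Omega)$ for each $i$, so an isomorphism $f$ is sent to an isomorphism $(f_\Omega)_*$ in every degree. Hence everything is in the converse.

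So assume $(f_\Omega)_*\colon\CH^i_\et(M_\Omega)\to\CH^i_\et(N_\Omega)$ is bijective for all $i\ge 0$. In particular it is surjective in every degree, so Lemma \ref{surj} applies and shows that $f\colon M\to N$ is surjective in the sense of Definition \ref{defsurj}, \emph{already over $k$}. By Lemma \ref{kim6.8} there is then a section $g\colon N\to M$ in $\ch_\et(k)$ with $f\circ g=\text{id}_N$. Consequently $e:=g\circ f$ is an idempotent endomorphism of $M$; since $\ch_\et(k)$ is pseudo-abelian, the complementary idempotent $\text{id}_M-e$ has an image $T$, and one checks in the usual way (using $f\circ e=f$ and $g=e\circ g$) that $f$ restricts to an isomorphism $\text{Im}(e)\xrightarrow{\ \simeq\ }N$, so that $M\simeq N\oplus T$ in $\ch_\et(k)$ with $f$ corresponding to the projection onto $N$.

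It then remains to prove $T=0$. Applying the additive functor $\CH^i_\et((-)_\Omega)$ to the splitting $M\simeq N\oplus T$ gives $\CH^i_\et(M_\Omega)\simeq\CH^i_\et(N_\Omega)\oplus\CH^i_\et(T_\Omega)$, with $(f_\Omega)_*$ identified with the first projection; since $(f_\Omega)_*$ is also injective, $\CH^i_\et(T_\Omega)=0$ for every $i\ge 0$. Now $T$ is an étale Chow motive over the algebraically closed field $k$ and $\Omega$ is a universal domain of $k$, so Lemma \ref{lemmM0} applies verbatim and yields $T\simeq 0$. Therefore $f\colon M\to N$ is an isomorphism in $\ch_\et(k)$, as wanted.

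I do not expect a genuine obstacle in this argument: the substantive work — the localization/specialization computation of $\text{im}(p\otimes Y)$ over function fields $k(Y)$ — has already been carried out in Lemmas \ref{lemmM0} and \ref{surj}. The only point deserving attention is bookkeeping of base fields: one must extract the section $g$, and hence the complementary summand $T$, over $k$ itself (which is why Lemma \ref{surj} is invoked to obtain surjectivity of $f$ over $k$, not merely over $\Omega$), so that Lemma \ref{lemmM0} can be applied directly with base field $k$ and no descent of the isomorphism from $\Omega$ down to $k$ is required. Such a descent would be available anyway, through conservativity of $p^*\colon\text{DM}_\et(k,\Z)\to\text{DM}_\et(\Omega,\Z)$ for the field extension $\Omega/k$ together with the full faithfulness of $\ch_\et(k)^{\mathrm{op}}\hookrightarrow\text{DM}_\et(k,\Z)$, but the route above stays entirely within Chow-motivic language.
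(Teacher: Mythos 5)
Your proof is correct, and it takes a genuinely different (and arguably cleaner) route than the paper. The paper first applies Lemma \ref{lemsurj} to conclude that $f_\Omega$ is an isomorphism in $\ch_\et(\Omega)$, and then descends to $k$ by combining the conservativity of $p^*:\text{DM}_\et(k,\Z)\to\text{DM}_\et(\Omega,\Z)$ from \cite[Th\'eor\`eme 3.9]{Ayo} with the full faithfulness of the embedding $\ch_\et(k)^{\mathrm{op}}\hookrightarrow\text{DM}_\et(k,\Z)$. You instead exploit the fact that Lemma \ref{surj} already produces surjectivity of $f$ over the base field $k$ (not over $\Omega$), so that the splitting $M\simeq N\oplus T$ can be extracted in $\ch_\et(k)$ directly; you then vanish the complement $T$ via Lemma \ref{lemmM0} applied over $k$, which is precisely the setting that lemma was formulated for. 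In effect you inline the content of Lemma \ref{lemsurj} but run it over $k$ rather than $\Omega$, thereby eliminating the descent step entirely and keeping the argument within the Chow-motivic language — as you yourself observe. The trade-off is that your route relies more explicitly on $k$ being algebraically closed (required by Lemmas \ref{lemmM0} and \ref{surj}); this matches the actual intended setting of the theorem (cf.\ Theorem \ref{imanin}), but the paper leaves $k$ unrestricted in the statement and instead pushes the algebraic-closure hypothesis onto $\Omega$ in Lemma \ref{lemsurj}, recovering $k$ via conservativity of pullback along the surjective morphism $\spc(\Omega)\to\spc(k)$. Both arguments are sound; yours is the more self-contained of the two.
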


\begin{proof}
 Assume that $f: M\to N$ is a isomorphism and $K/k$ a field extension of $k$, then it is clear that $f_K:M_K \to N_K$ is an isomorphism in $\ch_\et(K)$. Now let us assume that $(f_\Omega)_*:\CH^*_\et(M_\Omega)\to\CH^*_\et(N_\Omega)$ is an isomorphism. By Lemma \ref{lemsurj}, then the map $f_\Omega: M_\Omega \to N_\Omega$ is bijective, then if we invoke \cite[Théorème 3.9]{Ayo} and the full-faithfulness of the functor $\ch_\et(k)^{op}\hookrightarrow \text{DM}_\et(k,\Z)$, we obtain that the associated functor $i^*: \ch_\et(k) \to \ch_\et(\Omega)$ is conservative, since $i^*(f)=f_\Omega$, we conclude that $f$ is an isomorphism in $\ch_\et(k)$.
\end{proof}

\section{Decomposition of abelian varieties}

The idea of this section is the construction of the decomposition for a group scheme $G$ over a noetherian base of finite dimension $S$ in the category $\text{DM}_\et(S)$. 
Let us recall that for such $S$, by \cite[Proposition 5.4.12]{CD16}, the family of functors
\begin{align*}
    \rho_\Q: \text{DM}_\et(S,\Z) &\to \text{DM}_\et(S,\Q) \\
    \rho_{\Z/\ell}:\text{DM}_\et(S,\Z) &\to \text{DM}_\et(S,\Z/\ell), \hspace{2mm}\ell \text{ prime number invertible in }S
\end{align*}
is conservative. Since $\text{DM}_\et(S,\Q) \simeq \text{DM}(S,\Q)$ and by rigidity theorem $ \text{DM}_\et(S,\Z/\ell) \simeq D(S_\et,\Z/\ell)$, the last being the derived category of étale sheaves with coefficients over $\Z/\ell$. 
Also there exists a duality functor $D_k$ in $D_{c}^b(k_\et,\Z/\ell^n)$, the so called Poincaré-Verdier duality, such that $D_k \circ D_k \simeq \text{Id}$ in a canonical way, according to \cite{sga5} and \cite{gab}. These results are more general i.e. working over a quasi-excellent scheme $S$. Notice that, for a morphism $p:X \to k$ and  constructible elements $M \in D_{c}^b(k_\et,\Z/\ell^n)$ and $N \in D_{c}^b(X_\et,\Z/\ell^n)$, we have the following isomorphism 
\begin{align*}
    D_k\left(p_!(N)\right)& \simeq p_*\left(D_X(N)\right), \quad D_X\left( p^! (M)\right) \simeq p^*\left( D_k (M)\right), \\
    D_k(p^*(M)) &\simeq p^!(D_X(M)), \quad p_!(D_X(N)) \simeq D_X(p_*(N)),
\end{align*}
therefore we obtain the element $D_k(p_! p^! (\Z/{\ell^n})) \simeq Rp_* p^* D(\Z/\ell^n) \in D_c(k_\et,\Z/\ell^r).$ The same kind of duality result exists for constructible elements $M \in \text{DM}_\et(k,\Q)$ and $N \in \text{DM}_\et(X,\Q)$  according to \cite[Corollary 4.4.24]{CD19}.

For a field $k$ and a prime number $\ell$ different from the characteristic of $k$, let us consider the following commutative diagram 
\[  
    \begin{tikzcd}
  \text{DM}_{\et}(k)\arrow{r}{\rho_{\Q}} \arrow{d}{\rho_\ell} & \text{DM}_{\et}(k,\Q) \arrow{d}{} \\
   \text{DM}_{\et}(k)^{\wedge \ell} \arrow{r}{} &  \text{DM}_{\et}(k)^{\wedge \ell}\otimes \Q
  \end{tikzcd}
\]
by the rigidity result the category $ \text{DM}_{\et}(k)^{\wedge \ell}$ is equivalent to $D(k_\et,\Z_\ell)$. Recall that for a motive $M_\et(X) \in \text{DM}_\et(k)$, the $\ell$-adic realization of $M_\et(X)$ is given by $\rho_\ell(M_\et(X))=p_! p^! (\Z_{\ell})$ where $p:X \to \spc(k)$ is the structural morphism, and this also works for finite coefficients $\rho_{\ell^n}(M_\et(X))=p_! p^! (\Z/\ell^n)$. 

Let $S$ be a noetherian finite dimensional scheme and let $G/S$ a smooth commutative group scheme of finite type over $S$. We start with the definition of the 1-motive associated to $G/S$, for that we define the étale sheaf induced by $G/S$:
\begin{defi}\label{1motive}
    Let $\underline{G/S}$ be the étale sheaf of abelian groups on $\text{Sm}_S$ defined by $G$:
    \begin{align*}
        \underline{G/S}(U)=\ho_{\text{Sm}_S}(U,G)
    \end{align*}
    for $U \in \text{Sm}_S$. 
    The 1-motive associated to $\underline{G/S}$ and is defined as $$M_1(G/S):= \Sigma^\infty M^\eff_1(G/S)  \in \textbf{DA}^\et(S,\Z),$$ where $M^\eff_1(G/S)$ is the effective étale motive in  $\textbf{DA}^\et_\eff(S,\Z)$ induced by $\underline{G/S}$.
\end{defi}

According to \cite[Theorem 3.7]{AHP} in the motivic category $\text{DM}_\et(S,\Q)$  we have a decomposition of the relative motive $M_S(G)$ in the following way
\begin{align*}
    M_S(G) \xrightarrow{ \simeq } \left( \bigoplus_{n\geq 0}^{\text{kd}(G/S)} \text{Sym}^n M_1(G/S) \right)\otimes M(\pi_0(G/S)),
\end{align*}
where $M_1(G/S)$ is the 1-motive induced by the étale sheaf represented by $\underline{G/S} \otimes \Q$ and $\text{kd}(G/S):=\max\left\{2g_s+r_s \ | \ s \in S\right\}$ is the Kimura dimension ($g_s$ is the abelian rank of $G_s$ and $r_s$ is the torus rank).

\begin{defi}\label{order}
The order of $\pi_0(G/S)$, denoted  by $o(\pi_0(G/S))$ is defined as the least common multiple of the order of all the elements of the groups $\pi_0(G_{\bar{s}}/\bar{s})$, with $\bar{s}$ geometric point of $S$.
\end{defi}

The aim of this subsection is to see if we can lift this isomorphism to integral coefficients in $\text{DM}_\et(S,\Z)$.  If we want to construct such a morphism in $\text{DM}_\et(S,\Z)$, we have to define the integral analogue of the \textit{symmetric algebra}. For this we consider the homotopy fixed points of a group action, where the group is finite. 

\subsubsection{\texorpdfstring{$\mathfrak{S}_n$}{Lg}-actions on \texorpdfstring{$\text{DM}_\et(S,\Z)$}{Lg}}

In this subsection we will present some aspects about the action of a finite group $G$ in the category $\text{DM}_\et(S,\Z)$ of integral étale motives. In this context, an $\infty$-category will be an $(\infty,1)$-category in the sense of Lurie \cite{Lur}. An $\infty$-functor between two $\infty$-categories $\mathcal{C}$ and $\mathcal{D}$ is simply a map $F: \mathcal{C} \to \mathcal{D}$ of simplicial sets.

Consider the group of permutations of n elements $\mathfrak{S}_n$ and let $B\mathfrak{S}_n$ be the category of a single object and morphism the elements of the groups $\mathfrak{S}_n$. Define the \textbf{homotopy fixed points}  and \textbf{homotopy orbits} of $\mathfrak{S}_n$ of a motive $M_\et^S(X)$ as follows: we know that $\text{DM}_\et(S,\Z)$ carries a structure of an $\infty$-category. Let $\text{DM}_\et(S,\Z)^{B\mathfrak{S}_n}$ be the category of étale motives with a 
$\mathfrak{S}_n-$action, i.e. $\infty$-functors $B\mathfrak{S}_n \to \text{DM}_\et(S,\Z)$. We obtain adjunctions 
 \begin{align*}
 ( \ )^{\text{triv}}: \text{DM}_\et(S,\Z)&\leftrightarrows \text{DM}_\et(S,\Z)^{B\mathfrak{S}_n}: ( \ )^{h\mathfrak{S}_n}:=\holim_{B\mathfrak{S}_n},\\    \hocolim_{B\mathfrak{S}_n}=:( \ )_{h\mathfrak{S}_n}: \text{DM}_\et(S,\Z)^{B\mathfrak{S}_n}&\leftrightarrows \text{DM}_\et(S,\Z): ( \ )^{\text{triv}}
 \end{align*}
where $ ( \ )^{\text{triv}}$ represents the trivial action. Let 
$\text{DM}_\et(S,\Z)^\otimes$ be the underlying symmetric monoidal category of $\text{DM}_\et(S,\Z)$. We  can give an explicit description of $(-)^{h\mathfrak{S}_n}$ for some motives by using the monoidal structure of $\text{DM}_\et(S,\Z)^\otimes$. Notice that for $X \in \text{Sm}_k$ we have an action of  $\mathfrak{S}_n$  given by 
 \begin{align*}
\sigma_*:X^n &\to X^n \\
     (x_1,\ldots,x_n) &\mapsto (x_{\sigma(1)},\ldots,x_{\sigma(n)})
 \end{align*}
where $\sigma \in \mathfrak{S}_n$ and $X^n:=\overbrace{X \times \ldots \times X}^{\text{n-times}}$. For such $X$ and $n$, consider the functor 
\begin{align*}
    F^n_X: B\mathfrak{S}_n& \to \text{Sm}_S \\
    * &\mapsto X^n \\
    (*\xrightarrow{\sigma} *) &\mapsto  (X^n\xrightarrow{\sigma_*} X^n) 
\end{align*}
We can consider the motive $M^S_\et(X)^{\otimes n}$ as an $\infty$-functor from the category $\text{Sm}_S$ to $\text{DM}_\et(S,\Z)^\otimes$. Therefore we obtain the homotopy fixed points of $M_\et^S(X)^{\otimes n}$ as 
\begin{align}\label{descrip}
    \left(M_\et^S(X)^{\otimes n}\right)^{h\mathfrak{S}_n}\simeq \holim_{B\mathfrak{S}_n} M_\et^S \circ F^n_X(-).
\end{align}

If we $\Q$-linearize the homotopy fixed points, then we have the following result relating them with the usual fixed points of a group action: 

\begin{lemma}\label{rational}
Let $M_S(X^n)\simeq M_S(X)^{\otimes n} \in \text{DM}_\et(S,\Q)$, then $$\left(M_S(X)^{\otimes n}\right)^{h\mathfrak{S}_n}\simeq \left(M_S(X)^{\otimes n}\right)^{\mathfrak{S}_n}$$ and equals the image of the projector $\frac{1}{n!}\sum_{\sigma \in \mathfrak{S}_n} \sigma_*M_S(X)^{\otimes n}$.
\end{lemma}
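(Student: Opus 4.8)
The plan is to use the fact that over $\Q$ the category $\text{DM}_\et(S,\Q)\simeq\text{DM}(S,\Q)$ is a $\Q$-linear idempotent-complete (pseudo-abelian) triangulated category in which the group algebra $\Q[\mathfrak{S}_n]$ acts on $M_S(X)^{\otimes n}$, so that averaging idempotents make sense. Concretely, I would first recall that the homotopy fixed point object $\left(M_S(X)^{\otimes n}\right)^{h\mathfrak{S}_n}=\holim_{B\mathfrak{S}_n}M_S\circ F^n_X$ is computed, in a $\Q$-linear stable $\infty$-category, by the totalization of the cosimplicial object associated to the $\mathfrak{S}_n$-action; the key input is that for a \emph{finite} group $G$ and a $\Q$-linear stable $\infty$-category $\mathcal{C}$, the norm map $X_{hG}\to X^{hG}$ is an equivalence and both agree with the image of the averaging idempotent $e=\frac{1}{|G|}\sum_{g\in G}g\in\Q[G]$ acting on $X$. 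This is the $\infty$-categorical incarnation of Maschke's theorem: since $|G|$ is invertible, $\Q[G]$ is semisimple, the higher group cohomology $H^i(G,-)$ with $\Q$-coefficients vanishes for $i>0$, hence the homotopy limit spectral sequence degenerates and the totalization collapses to the $G$-invariants, which in a pseudo-abelian category are represented by $(X,e)$, the image of $e$.

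The steps, in order, would be: (1) identify $\holim_{B\mathfrak{S}_n}M_S\circ F^n_X$ with the totalization $\mathrm{Tot}$ of the associated cosimplicial diagram, using the description \eqref{descrip}; (2) invoke the vanishing of $H^{>0}(\mathfrak{S}_n,-)$ on $\Q$-vector spaces (equivalently, semisimplicity of $\Q[\mathfrak{S}_n]$) to argue the homotopy-limit spectral sequence of this totalization is concentrated in a single column, so $\mathrm{Tot}$ is the (homotopy) equalizer, i.e.\ the ordinary categorical invariants $\left(M_S(X)^{\otimes n}\right)^{\mathfrak{S}_n}$; (3) in the pseudo-abelian $\Q$-linear triangulated category $\text{DM}_\et(S,\Q)$, identify these invariants with the image of the idempotent $p=\frac{1}{n!}\sum_{\sigma\in\mathfrak{S}_n}\sigma_*$ acting on $M_S(X)^{\otimes n}$, using that $p$ is indeed idempotent (immediate from the group law) and that $\mathrm{im}(p)$ satisfies the universal property of the $\mathfrak{S}_n$-fixed points. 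Finally (4) record that the composite $\left(M_S(X)^{\otimes n}\right)^{h\mathfrak{S}_n}\to M_S(X)^{\otimes n}$ (counit) followed by $p$ exhibits the claimed identification, and that under the equivalence $M_S(X^n)\simeq M_S(X)^{\otimes n}$ the permutation action coming from $\sigma_*:X^n\to X^n$ matches the one used to form the homotopy fixed points.

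The main obstacle — really the only nontrivial point — is step (2), i.e.\ making precise that the homotopy fixed points of a finite group action in a $\Q$-linear stable $\infty$-category reduce to the naive fixed points (the idempotent splitting). I expect to handle this by citing the standard fact that in the presence of an invertible $|G|$ the norm map is an equivalence and the Tate construction vanishes, so $(-)^{hG}$ is an exact functor that commutes with the triangulated structure and is computed by the averaging projector; references such as Lurie's treatment of ambidexterity, or the elementary observation that $B\mathfrak{S}_n$ has $\Q$-cohomological dimension $0$, suffice. Everything else is formal: idempotency of $p$ is the group law, and pseudo-abelianness of $\text{DM}_\et(S,\Q)$ guarantees $\mathrm{im}(p)$ exists as an object. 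I would keep the write-up short, emphasizing that the content is Maschke's theorem applied levelwise and that stability plus $\Q$-linearity upgrades it to the homotopy-coherent statement.
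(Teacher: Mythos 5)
Your proposal is correct and follows essentially the same route as the paper: both reduce to the fact that, in a $\Q$-linear stable setting, homotopy fixed points of a finite group action coincide with the image of the averaging idempotent $\frac{1}{|G|}\sum_{g\in G}g$. The paper simply cites \cite[3.3.21]{CD19} for this statement, whereas you unpack the underlying mechanism (Maschke's theorem, degeneration of the homotopy-limit spectral sequence, equivalence of the norm map); this is additional exposition, not a different argument.
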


\begin{proof}
    This holds in greater generality, see \cite[3.3.21]{CD19}. If 
    $\mathcal{V}$ is a $\Q$-linear stable model category and $G$ is a finite group that acts on an object $E  \in \mathcal{V}$ we define $$E^{hG}:= \holim_{BG} E,$$ and define $E^G \in Ho(\mathcal{V})$ as the image of 
    \begin{align*}
        p(x)= \frac{1}{|G|}\sum_{g \in G}g.x.
    \end{align*}
Then the morphism $E^G \xrightarrow{\sim} E^{hG}$ induced by the inclusion $E^G \to E$ is an isomorphism in $Ho(\mathcal{V})$.
\end{proof}

\begin{remark}\label{remrational}
    \begin{enumerate}
        \item The same argument works in a category $\text{DM}_\et(S,\Lambda)$ if $n$ is invertible in the ring $\Lambda$. A very important remark is that the proof of the previous lemma relies in the commutative structure of the $\Q$-linear vector space. If we consider the homotopy fixed points using an anti-commutative structure (and $n$ is invertible), then we obtain that $E^{h\mathfrak{S}_n}$ equals the image of the projector  $\frac{1}{n!}\sum_{\sigma \in \mathfrak{S}_n} \text{sgn}(\sigma)\sigma_*E$.
        
        \item In the same way, we define the homotopy orbits of $\mathfrak{S}_n$ as the co-invariants of $M^S_\et(X)^{\otimes n}$, i.e. in the following way $(M^S_\et(X)^{\otimes n})_{h \mathfrak{S}_n} = \hocolim_{B\mathfrak{S}_n} M^S_\et(X)^{\otimes n}$. By the definition of homotopy colimit, we have a map $M^S_\et(X)^{\otimes n} \to (M^S_\et(X)^{\otimes n})_{h \mathfrak{S}_n}$.  
        
        \item Let $\text{DM}_h(S,\hat{\Z}_\ell)$ be localizing subcategory of $\text{DM}_h(S,\Z)$ generated by the objects of the form $M/\ell = \Z/\ell \otimes^R M$. According to \cite[7.2.10]{CD16} we have an adjunction $\hat{\rho}^*_\ell: \text{DM}_h(S,\Z) \leftrightarrows \text{DM}_h(S,\hat{\Z}_\ell):\hat{\rho}_{\ell *}$, where $\hat{\rho}^*_\ell$ is called the $\ell$-adic realization functor, which by  \cite[Theorem 7.2.11]{CD16} it is compatible with the six functors formalism of Grothendieck, and preserves colimits. Let $D(S_\et,\Z_\ell)$ be the derived category of $\ell$-adic sheaves as in \cite{Eke}. Consider the equivalence of categories  given in \cite[Proposition 7.2.21]{CD16}, then $\text{DM}_h(S,\hat{\Z}_\ell) \simeq  D(S_\et,\Z_\ell)$, so we define the realization functor
        $\rho_\ell: \text{DM}_h(S,\Z)\to  D(S_\et,\Z_\ell)$. This functor again is compatible with the six functors formalism of Grothendieck, and preserves colimits (as it is the composition of a left adjoint with an ), thus  we have that
\begin{align*}
    \rho_\ell\left((M^S_\et(X)^{\otimes n})_{h \mathfrak{S}_n}\right) \simeq \left((\rho_\ell M^S_\et(X))^{\otimes n}\right)_{h \mathfrak{S}_n}.
\end{align*}
    \end{enumerate}
\end{remark}

For the sake of completeness, we will present a reminder about the theory of 1-motives, with such goal in mind, we present some of the main results of \cite{org}.  Consider a commutative group scheme $G$ over a perfect field $k$. According to \cite[Lemme 3.1.1]{org},  the sheaf $\underline{G}$ is an étale presheaf which admits transfers. Notice that as a presheaf $\underline{G}$ is homotopy invariant i.e. $\underline{G}(U)\xrightarrow{\sim} \underline{G}(U\times \mathbb{A}^1_k)$ is an isomorphism for any $k$-smooth variety $U$, for a proof see \cite[Lemme 3.3.1]{org}. As we have a morphism of a complex of sheaves $C_*^{\text{Sus}}\Z^{tr}(G)\to C_*^{\text{Sus}}(\underline{G})$ and a quasi-isomorphism $\underline{G} \to C_*^{\text{Sus}}(\underline{G})$, finally we obtain a morphism in $\text{DM}_\et(k,\Z)$ between the motives $M_\et(G) \xrightarrow{\alpha_A} M_1(G)$.

If we want to work over a noetherian base $S$, and simplify the condition about étale sheaves with transfers, we work with the category $\textbf{DA}^\et(S,\Z)$. Consider a commutative group scheme $G$ over a noetherian base $S$ and let $\underline{G/S}$ the associated abelian sheaf and $M_1(G/S)$ the 1-motive described in Definition \ref{1motive}. As we have a morphism of a complex of pre-sheaves $a_{G/S}:\Z\ho_S(\cdot, G)\to \underline{G/S}$, then after sheafification we obtain a morphism $\alpha_{G/S}^\eff: M^{S,\eff}_\et(G) \to M_1^\eff(G/S)  \in \textbf{DA}^\et_\eff(S,\Z)$. Finally we obtain a morphism in $\textbf{DA}^\et(S,\Z)$ between the motives $\alpha_{G/S}=\Sigma^\infty \alpha_{G/S}^\eff: M_\et^S(G) \xrightarrow{\alpha_A} M_1(G/S)$ in $\textbf{DA}^\et(S,\Z)$.

As the functor $M^S_\et$ is monoidal and commutative, we obtain an isomorphism $M^S_\et(G\times G)\simeq M^S_\et(G)\otimes M^S_\et(G)$. For the general case we denote $M^S_\et(G)^{\otimes n}:=M^S_\et(\overbrace{G\times \ldots \times G}^{\text{n-times}})$. For a fixed $n$ and using the $n$-diagonal morphism $\delta^n_{G/S}:G \to G\times \ldots \times G$, we obtain an induced morphism of motives
 \begin{align*}
     M^S_\et(G) \xrightarrow{(\delta^n_{G/S})_*} M^S_\et(G)^{\otimes n}.
 \end{align*}
Together with the map $\alpha_G$ we construct a map
$$\phi_n: M^S_\et(G) \xrightarrow{(\delta^n_{G/S})_*} M^S_\et(G)^{\otimes n} \xrightarrow{\alpha^{\otimes n}_{G/S}} M_1(G/S)^{\otimes n}.$$ Notice that $M^S_\et(G)^{\otimes n}$ admits an action of the permutation group $\mathfrak{S}_n$, and this action leaves invariant the diagonal map $\delta^n_{G/S}$. Therefore we can apply the functor of homotopy fixed points $h\mathfrak{S}_n$. With this, we have a commutative diagram
 \[
  \begin{tikzcd}
M^S_\et(G) \arrow{r}{(\delta^n_{G/S})_*} \arrow[swap]{rd}{(\delta^n_{G/S})_*}& M^S_\et(G)^{\otimes n} \arrow{r}{\alpha^{\otimes n}_{G/S}} & M_1(G/S)^{\otimes n}  \\
&  \left(M^S_\et(G)^{\otimes n}\right)^{h\mathfrak{S}_n} \arrow{r}{ \alpha^{\otimes n}_{G/S}} \arrow{u} &  \left(M_1(G/S)^{\otimes n}\right)^{h\mathfrak{S}_n}. \arrow{u}
  \end{tikzcd}
\]
We denote the composite map $\alpha^{\otimes n}_{G/S} \circ(\delta^n_{G/S})_*: M^S_\et(G) \to \left(M_1(G/S)^{\otimes n}\right)^{h\mathfrak{S}_n}$ as $\phi^n_{G/S}$ .
\begin{defi}\label{mor}
    Let $G$ be a smooth commutative group scheme over a noetherian scheme $S$. Then we define the following:
        \item \begin{align*}
            \phi_{G/S}:= \bigoplus_{i\geq 0}\phi^i_{G/S}: M^S_\et(G) \to \bigoplus_{i\geq 0} \left(M_1(G/S)^{\otimes i}\right)^{h\mathfrak{S}_i}  
        \end{align*}
 in the category $\text{DM}_\et(S,\Z)$. We define the weak symmetric algebra of $M_1(G)$ as
\begin{align*}
    \text{wSym}(M_1(G)):=\bigoplus_{i\geq 0}\left(M_1(G)^{\otimes i}\right)^{h\mathfrak{S}_i}.
\end{align*}        
\end{defi}


Let us give some information about the realization of the morphism $\phi_{G/S}$ presented in Definition \ref{mor}, in the category $D(S_\et,\Z/\ell^n)$ for a prime number $\ell$ invertible in $S$ and $n \in \N$. We have a realization $\rho_{\Z/\ell^n}:\text{DM}_h(S,\Z) \to D(S_\et,\Z/\ell^n)$ and denote $M_1(G/S,\ell^n):= \rho_{\Z/\ell^n}(M_1(G/S))$, which is a complex in degree $-1$. We use $\mathcal{H}_1(G/S,\Z/\ell^n)$ for the homology of $M_1(G/S,\ell^n)$ is degree 1 and $\mathcal{H}^1(G/S,Z/\ell^n)$ for the cohomology of the complex $M_1(G/S,\ell^n)$ in degree $-1$. If the base is $S=\spc(k)$ with $\bar{k}=k$ then $D(k_\et,\Z/\ell) \simeq (\mathbb{F}_\ell-v.s.)^\Z$, where $(\mathbb{F}_\ell-v.s.)^\Z$ is the category of $\Z$-graded $\mathbb{F}_\ell$-vector spaces, $\mathcal{H}_1(G/S,\Z/\ell^n)$  is a finite dimensional $\mathbb{F}_\ell$-vector space.


\begin{lemma}\label{realization}
    Let $\ell$ be a prime number invertible in $S$, $n \in \N$ and consider the realization functor $\rho_{\Z/\ell^n}: \text{DM}_\et(S,\Z) \to D(S_\et,\Z/\ell^n)$. Then
    \begin{enumerate}
        \item if $\phi: \text{DM}_\et(S,\Z) \to \text{DM}_\et(S,\Z) $ is an additive functor and $\bar{\phi}:D(S_\et,\Z/\ell^n) \to D(S_\et,\Z/\ell^n)$ its associated counterpart with finite coefficients, then the functor $\rho_{\Z/\ell^n}$ commutes with $\phi$, in the sense that $\bar{\phi} = \rho_{\Z/\ell^n} \circ \phi$.
        \item If $S=\spc(k)$ for some field $k$, then $\rho_{\Z/\ell^n}(M_1(G))=\mathcal{H}_1(G,\Z/\ell^n)\simeq G[\ell^n][1]$\footnote{Here the first square bracket is  associated to the $\ell^n$-torsion of $G$ and the second is associated to the translation functor.}.
        \item There exists $N>>0$ such that $(M_1(G/S)^{\otimes m})^{h \mathfrak{S}_m}=0$ for all $m>N$.
    \end{enumerate}
\end{lemma}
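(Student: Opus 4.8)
Parts (1) and (2) are bookkeeping with the definitions, while (3) is obtained by reducing to the absolute case over an algebraically closed field and testing on the conservative family of realizations. For (1): up to the rigidity equivalence $\text{DM}_\et(S,\Z/\ell^n)\simeq D(S_\et,\Z/\ell^n)$, the functor $\rho_{\Z/\ell^n}$ is the derived change of coefficients $M\mapsto M\otimes_{\mathbf 1}\mathbf 1_{\Z/\ell^n}$ with $\mathbf 1_{\Z/\ell^n}=\text{cofib}(\mathbf 1\xrightarrow{\ell^n}\mathbf 1)$. Hence $\rho_{\Z/\ell^n}$ is symmetric monoidal, triangulated (in particular exact), colimit-preserving, and — being computed by a finite colimit — also commutes with finite limits and with the homotopy (co)limits $\holim_{B\mathfrak S_n}$, $\hocolim_{B\mathfrak S_n}$ over the finite group $\mathfrak S_n$; all of this is contained in \cite[7.2.10, 7.2.11, Proposition 7.2.21]{CD16} and the rigidity theorem. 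Consequently, if $\phi$ is an additive endofunctor of $\text{DM}_\et(S,\Z)$ assembled from $\otimes$, shifts, Tate twists, finite sums, cones and such homotopy (co)limits, and $\bar\phi$ is the functor on $D(S_\et,\Z/\ell^n)$ defined by the same recipe, then transporting the defining diagram along $\rho_{\Z/\ell^n}$ produces a canonical isomorphism $\rho_{\Z/\ell^n}\circ\phi\simeq\bar\phi\circ\rho_{\Z/\ell^n}$.

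For (2), I would unwind Definition \ref{1motive}: $M_1(G)=\Sigma^\infty M_1^\eff(G)$ is built from the abelian étale sheaf $\underline G$. Since $\ell$ is invertible in $S$, multiplication by $\ell^n$ on $G$ is finite étale surjective with kernel the finite group scheme $G[\ell^n]$, so the derived mod-$\ell^n$ reduction of $\underline G$, namely $\text{cofib}(\underline G\xrightarrow{\ell^n}\underline G)$, is $\underline{G[\ell^n]}[1]$ as a complex of étale sheaves. Applying $\rho_{\Z/\ell^n}$ gives $\rho_{\Z/\ell^n}(M_1(G))\simeq\underline{G[\ell^n]}[1]$, which over $S=\spc(k)$ with $k=\bar k$ is the finite abelian group $G(\bar k)[\ell^n]$ in degree $-1$; thus $\mathcal H_1(G,\Z/\ell^n)=G[\ell^n]$ and $\rho_{\Z/\ell^n}(M_1(G))\simeq G[\ell^n][1]$. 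This is the integral shadow of the $\ell$-adic computation $\mathcal H_1(G,\Q_\ell)=V_\ell G$ recalled in the introduction (cf.\ \cite{AHP}, \cite{org}).

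For (3), I would first use Lemma \ref{consCD}, \cite[Proposition 3.24]{Ayo} and the continuity of $\textbf{DA}^\et$ (\cite[Proposition 6.3.7]{CD16}) to reduce to $S=\spc(\bar k)$ and to replace $G/S$ by a geometric fibre $G_{\bar s}$, so that it suffices to find $N$ with $(M_1(G)^{\otimes m})^{h\mathfrak S_m}=0$ for $m>N$. By the conservative family $(\rho_\Q,(\rho_{\Z/\ell})_{\ell\neq\text{char}(k)})$ of \cite[Proposition 5.4.12]{CD16} it is enough that every realization of $(M_1(G)^{\otimes m})^{h\mathfrak S_m}$ vanish for $m$ large. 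Rationally, Lemma \ref{rational} and Remark \ref{remrational}(1) — the $1$-motive sits in odd degree, so the governing idempotent is $\tfrac1{m!}\sum_\sigma\text{sgn}(\sigma)\sigma_*$ — identify $(M_1(G)^{\otimes m})^{h\mathfrak S_m}\otimes\Q$ with $\text{Sym}^m(M_1(G)\otimes\Q)$ in the sense of \cite[Theorem 3.7]{AHP}, which is zero for $m>\text{kd}(G)$. Mod $\ell$, the monoidality and exactness of $\rho_{\Z/\ell}$ established in (1), together with (2), rewrite $\rho_{\Z/\ell}\big((M_1(G)^{\otimes m})^{h\mathfrak S_m}\big)\simeq\big((G[\ell][1])^{\otimes m}\big)^{h\mathfrak S_m}$ in $D(\bar k_\et,\Z/\ell)$; since $\dim_{\mathbb F_\ell}G[\ell]=\text{kd}(G)$, one is left to check that this object — the $m$-th graded piece of the realization of $\text{wSym}(M_1(G))$ — vanishes for $m>\text{kd}(G)$, which one reads off from \cite[Lemma 4.1]{BSz}: $H^*_\et(G,\Z/\ell)=\bigwedge^* H^1_\et(G,\Z/\ell)$ on a space of dimension $\text{kd}(G)$, hence is concentrated in degrees $\le\text{kd}(G)$. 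Taking $N=\text{kd}(G)$ (respectively $N=\text{kd}(G/S)=\max_s\text{kd}(G_s)$ in the relative case) then concludes.

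\textbf{Main obstacle.} Parts (1) and (2) are formal, and the rational half of (3) is essentially \cite[Theorem 3.7]{AHP}; the delicate point is the mod-$\ell$ vanishing in (3), i.e.\ pinning down that the mod-$\ell$ realization of $(M_1(G)^{\otimes m})^{h\mathfrak S_m}$ is controlled by $\bigwedge^m H^1_\et(G,\Z/\ell)$ and therefore dies for $m$ exceeding $\text{kd}(G)$, uniformly in $\ell$.
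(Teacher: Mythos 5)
Parts (1) and (2) are essentially correct and close in spirit to the paper. For (1) the paper also identifies $\rho_{\Z/\ell^n}(M)$ with $\text{Cone}(\ell^n\cdot\text{id}_M)$ and uses additivity of $\phi$ to compare the two cones; your version lists more structure than is strictly needed, but the idea is the same. For (2) you argue directly from the cofibre of $\ell^n\colon\underline{G}\to\underline{G}$, whereas the paper passes through the Tate module $T_\ell(G)=\varprojlim G[\ell^n]$ and the transition maps; both routes are fine and your argument is arguably tidier. The rational half of (3), via Lemma~\ref{rational} and \cite[Proposition 4.1]{AHP}, matches the paper.

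The genuine gap is in the mod-$\ell$ half of (3). You reduce to showing $\big((G[\ell][1])^{\otimes m}\big)^{h\mathfrak S_m}=0$ for $m>\dim_{\mathbb F_\ell}G[\ell]=\text{kd}(G)$ and ``read this off'' from \cite[Lemma 4.1]{BSz}, i.e.\ from $H^*_\et(G,\Z/\ell)\simeq\bigwedge^*H^1_\et(G,\Z/\ell)$. But that lemma is a statement about the realization of $M_\et(G)$, not about the realization of $\text{wSym}(M_1(G))$; the two are compared only \emph{a posteriori}, in Lemma~\ref{lemmacoeffin} and Theorem~\ref{teoprin}, which in turn \emph{use} the present Lemma~\ref{realization}(3). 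So your appeal to \cite{BSz} is circular at this point in the argument. Moreover, even granting the comparison, the assertion that the $m$-th graded piece $\big(\mathcal H_1(G,\Z/\ell)^{\otimes m}\big)^{h\mathfrak S_m}$ is an exterior power is exactly the delicate point: for $m$ and $\ell$ coprime one can average over $\mathfrak S_m$ (Remark~\ref{remrational}(1)) and get $\bigwedge^m\mathcal H_1(G,\Z/\ell)$, but when $\ell\mid m!$ the homotopy fixed points are not the ordinary (anti)invariants, and one cannot simply quote the dimension bound. The paper makes precisely this case distinction: it treats $\gcd(m,\ell)=1$ by the averaging idempotent, and for $\gcd(m,\ell)\neq 1$ it reduces to a geometric point, fixes a basis $\{e_1,\dots,e_{r_\ell}\}$ of $\mathcal H_1(G,\Z/\ell)$, and argues directly with the alternating $\mathfrak S_m$-action and the explicit description (\ref{descrip}) of the homotopy limit. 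Your write-up skips both the case split and the actual computation of the homotopy fixed points, so as it stands the mod-$\ell$ vanishing in (3) is not established.
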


\begin{proof}
 1. Recall that the functor is defined as $\rho_{\Z/\ell^n}(M)=\Z/\ell^n \otimes^L M = \text{coker}(M \xrightarrow{ \cdot \ell^n}M)$, therefore we have a canonical isomorphism $\rho_{\Z/\ell^n}(M) \simeq \text{Cone}( \ell^n \cdot \text{id}_M)$.   Let $\phi$ be an additive functor. Then in the  commutative diagram
 \[
  \begin{tikzcd}
\phi(M) \arrow{r}{\phi(\ell^n\cdot\text{id}_M)} \arrow[d,equal] & \phi(M) \arrow{r} \arrow[d,equal] & \bar{\phi}\left(\rho_{\Z/\ell^n}(M)\right) \arrow{d}  \xrightarrow{+1} \\
\phi(M) \arrow{r}{\ell^n\cdot\text{id}_{\phi(M)}} & \phi(M) \arrow{r} & \rho_{\Z/\ell^n}(\phi(M)) 
\xrightarrow{+1} 
  \end{tikzcd}
\]
the right vertical arrow is an isomorphism as well.

 2.  Let us consider the 1-motive $M_1(G) = \underline{G}$ which is concentrated in degree 1. Recall that the $\ell$-adic realization of $M_1(G)$, integral or rational, is given by the Tate module $T_\ell(G)=\varprojlim_n G[\ell^n]$, thus $\rho_{\Z/\ell^n}(M_1(G))\simeq G[\ell^n][1]$ by the transition maps.

 3. Using Lemma \ref{rational}, we see that the weak symmetric algebra of $M_1(G/S)$ with rational coefficients coincides with the symmetric algebra of $M_1(G/S)$. In particular $\text{Sym}^n(M_1(G/S))=0$ in $\text{DM}_\et(k,\Q)$ if $n>\text{kd}(G/S)$ by \cite[Proposition 4.1]{AHP}. The only argument that remains to be given is for the torsion part. For this, consider a prime number $\ell$ invertible in $S$. Notice that $\text{wSym}(\mathcal{H}_1(G/S,\Z/\ell))$ is anticommutative by the cup-product, see \cite[Proposition 7.4.10]{Fu15}, therefore according to the first point in Remark \ref{remrational}, if $n$ and $\ell$ are coprime, we have that $(\mathcal{H}_1(G/S,\Z/\ell)^{\otimes n})^{h \mathfrak{S}_n}\simeq \bigwedge^n \mathcal{H}_1(G/S,\Z/\ell)$, and in particular vanishes if $n>\text{kd}(G/S)$. If $n$ and $\ell$ are not coprime, then we proceed as follows: we can reduce to the case where $S=\spc(k)$ for an algebraically closed field $k$. Then by the point 2, we have that $M_1(G/S,\ell^n)$  is a complex in degree 1 whose first homology group $\mathcal{H}_1(G/S,\Z/\ell)$ is a finite dimensional vector space over $\mathbb{F}_\ell$. Let $r_\ell$ be the dimension of $\mathcal{H}_1(G/S,\Z/\ell)$ and let $\{e_1,\ldots,e_{r_\ell}\}$ be a base, then if we consider $m>r_\ell$, then there will always be at least one $e_i$ repeated in $e_{i_1}\otimes \ldots \otimes e_{i_m}$ as a base of $\mathcal{H}_1(G/S,\Z/\ell)^{\otimes m}$. By the alternating action of $\mathfrak{S}_m$ in $\mathcal{H}_1(G/S,\Z/\ell)^{\otimes m}$ and the description in this particular case of the homotopy fixed points given in (\ref{descrip}), we conclude that  $(\mathcal{H}_1(G/S,\Z/\ell)^{\otimes m})^{h\mathfrak{S}_m}=0$ for all $m>r_\ell$. 
 
 Since the family of functors associated to the change of coefficient is conservative, then one concludes that $N=\max_\ell\{\text{kd}(G),r_\ell\}$.
\end{proof}

\begin{remark}
\begin{enumerate}
    \item Consider $S=\spc(k)$ with $k$ an algebraically closed field, then for a commutative algebraic group $G/k$. By an argument given in \cite[Proposition 4.1]{BSz} involving reduction to the assumption that $G$ is semi-abelian, we may assume that $G$ is the extension of an abelian variety $A$ by a torus $T$, we have a short exact sequence $1\to T[\ell^n] \to G[\ell^n] \to A[\ell^n] \to 1$ obtaining that $G[\ell^n] \simeq (\Z/\ell^n)^{2g+r}$, where $g$ is the dimension of $A$ and $r$ is the rank of $T$.
    \item Under the same assumptions for $S$, thank to the second point of Lemma \ref{realization} we get that $\rho_{\Z/\ell}(M_1(G))\in D(k_\et,\Z/\ell) \simeq (\mathbb{F}_\ell-v.s.)^\Z$, where $(\mathbb{F}_\ell-v.s.)^\Z$ is the category of $\Z$-graded $\mathbb{F}_\ell$-vector spaces, is a finite dimensional $\mathbb{F}_\ell$-vector space. Since the dimension of the vector space depends only on $G$ and not on $\ell$, we can say that the $N$ described in point 3 of \ref{realization} corresponds to the Kimura dimension $\text{kd}(G)$.
\end{enumerate}
    
\end{remark}

\begin{lemma}\label{lemmacoeffin}
    Let $G$ be a smooth group scheme over a field $k=\bar{k}$ and let $\ell$ be a prime number different from $\text{char}(k)$. Then we have an isomorphism in $D(k_\et,\Z/\ell) \simeq (\mathbb{F}_\ell-e.v.)^{\Z}$ given by
    \begin{align*}
        \rho_{\Z/\ell}(M_\et(G))=M_\et(G)/\ell \xrightarrow{\sim} \bigoplus_{i=0}^{\text{kd}(G)}\left(\bigwedge^i \mathcal{H}_1(G,\Z/\ell)\right)[i]    \end{align*}
\end{lemma}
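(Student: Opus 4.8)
The plan is to compute the realization $\rho_{\Z/\ell}(M_\et(G))=M_\et(G)/\ell$ directly in $D(k_\et,\Z/\ell)$. Under the rigidity equivalence this is the category of $\Z$-graded $\mathbb{F}_\ell$-vector spaces, so every object splits (non-canonically) as the sum of its shifted cohomology groups and it suffices to identify those. By the compatibility of the $\ell$-adic realization with the six functor formalism (recalled earlier in this section), $\rho_{\Z/\ell}(M_\et(G))\simeq p_!p^!(\Z/\ell)$ for the structural morphism $p\colon G\to\spc(k)$. As $G$ is smooth over $k=\bar k$ of dimension $d:=\dim G$, relative purity gives $p^!(\Z/\ell)\simeq\Z/\ell(d)[2d]$, whence $\rho_{\Z/\ell}(M_\et(G))\simeq R\Gamma_c(G,\Z/\ell)(d)[2d]$, and the Tate twist may be dropped since $\mu_\ell\simeq\Z/\ell$ over an algebraically closed field.

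Next I would apply Poincaré--Verdier duality in the form $D_k p_!\simeq p_*D_G$ recalled above, which identifies $R\Gamma_c(G,\Z/\ell)(d)[2d]$ with the $\mathbb{F}_\ell$-linear dual $R\Gamma_\et(G,\Z/\ell)^\vee$. Then I would invoke \cite[Lemma 4.1]{BSz} (equivalently \cite[Lemma 15.2]{Mil84}): for the connected commutative group $G$ there is an isomorphism of graded $\mathbb{F}_\ell$-algebras $H^*_\et(G,\Z/\ell)\simeq\bigwedge^{*}H^1_\et(G,\Z/\ell)$. Splitting $R\Gamma_\et(G,\Z/\ell)\simeq\bigoplus_i H^i_\et(G,\Z/\ell)[-i]$, dualizing term by term, and using that exterior powers commute with $(-)^\vee$ for finite-dimensional vector spaces, I obtain
\[
\rho_{\Z/\ell}(M_\et(G))\;\simeq\;\bigoplus_{i\ge 0}\Bigl(\bigwedge^{i}H^1_\et(G,\Z/\ell)^\vee\Bigr)[i].
\]
It remains to match this with the statement: by definition $\mathcal{H}_1(G,\Z/\ell)$ is the degree-$1$ homology of $\rho_{\Z/\ell}(M_1(G))$, which by Lemma~\ref{realization}(2) is $G[\ell]$; this is the degree-$1$ étale homology of $G$, hence Poincaré-dual to $H^1_\et(G,\Z/\ell)$, so $H^1_\et(G,\Z/\ell)^\vee\simeq\mathcal{H}_1(G,\Z/\ell)$. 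Since $\dim_{\mathbb{F}_\ell}\mathcal{H}_1(G,\Z/\ell)=2g+r=\text{kd}(G)$ by the remark following Lemma~\ref{realization}, the exterior powers vanish for $i>\text{kd}(G)$, which truncates the sum to the displayed one.

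The only real input is \cite[Lemma 4.1]{BSz}; the rest is twist-and-shift bookkeeping through duality inside the (formal) category of graded $\mathbb{F}_\ell$-vector spaces, and the one point to verify carefully is that the identification $\mathcal{H}_1(G,\Z/\ell)\simeq H^1_\et(G,\Z/\ell)^\vee$ is compatible with the duality isomorphism used above — which it is. One should also keep connectedness in mind: since $\text{kd}(G)=2g+r$ ignores $\pi_0(G)$, the statement is to be read for $G$ connected, in line with \cite[Lemma 4.1]{BSz} and Proposition~\ref{teoprin}. (An alternative would be to realize the morphism $\phi_G$ mod $\ell$ and compute the target via homotopy fixed points, but describing $(M_1(G)^{\otimes i})^{h\mathfrak{S}_i}/\ell$ when $\ell\mid i!$ is delicate, so I would rather establish the present lemma independently and feed it into the proof of Proposition~\ref{teoprin}.)
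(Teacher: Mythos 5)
Your proposal is a correct calculation, but it establishes a different isomorphism than the one the paper's own proof supplies, and the two are not interchangeable in the logical structure of the section. Read literally, the lemma asserts $\rho_{\Z/\ell}(M_\et(G))\simeq\bigoplus_i\bigl(\bigwedge^i\mathcal{H}_1(G,\Z/\ell)\bigr)[i]$; that is what you prove, via relative purity, Poincaré--Verdier duality, and \cite[Lemma 4.1]{BSz} --- and this is essentially the argument the paper carries out \emph{inline} inside the proof of Theorem~\ref{teoprin}, not inside Lemma~\ref{lemmacoeffin}. The paper's proof of the lemma proves something different: it computes the mod-$\ell$ realization of the \emph{target} of $\phi_G$, namely
\[
\rho_{\Z/\ell}\bigl(\text{wSym}(M_1(G))\bigr)\simeq \text{wSym}\bigl(\mathcal{H}_1(G,\Z/\ell)\bigr)\simeq\bigoplus_{i=0}^{\text{kd}(G)}\Bigl(\bigwedge^i\mathcal{H}_1(G,\Z/\ell)\Bigr)[i],
\]
by first checking $\text{wSym}\bigl(\mathcal{H}_1(G\times H,\Z/\ell)\bigr)\simeq \text{wSym}\bigl(\mathcal{H}_1(G,\Z/\ell)\bigr)\otimes\text{wSym}\bigl(\mathcal{H}_1(H,\Z/\ell)\bigr)$, then observing that the group law $m:G\times G\to G$ makes $\text{wSym}\bigl(\mathcal{H}_1(G,\Z/\ell)\bigr)$ a connected bicommutative Hopf algebra over $\mathbb{F}_\ell$, and finally invoking the structure theorem \cite[Lemma 15.2]{Mil84}. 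Your route never touches $\text{wSym}\bigl(\mathcal{H}_1(G,\Z/\ell)\bigr)$ --- in particular it does not resolve the case $\ell\le\text{kd}(G)$, where $\bigl(\mathcal{H}_1(G,\Z/\ell)^{\otimes i}\bigr)^{h\mathfrak{S}_i}$ cannot be identified with $\bigwedge^i\mathcal{H}_1(G,\Z/\ell)$ by the naive sign-projector argument.

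This is exactly why your closing remark misfires. You propose to ``establish the present lemma independently and feed it into the proof of Proposition~\ref{teoprin},'' precisely to avoid analysing $\bigl(M_1(G)^{\otimes i}\bigr)^{h\mathfrak{S}_i}/\ell$ when $\ell\mid i!$. But Theorem~\ref{teoprin} needs \emph{both} sides of $\rho_{\Z/\ell}(\phi_G)$ identified: the source $M_\et(G)/\ell$ (your computation, via \cite[Lemma 4.1]{BSz} and duality) \emph{and} the target $\text{wSym}(M_1(G))/\ell$ (the content the paper's proof actually supplies). Proving the lemma your way and ``feeding it in'' leaves the $\text{wSym}$ side --- and with it the delicate modular case --- entirely unaddressed, and one cannot then conclude that $\rho_{\Z/\ell}(\phi_G)$ is an isomorphism. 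Two smaller points: the identification $\mathcal{H}_1(G,\Z/\ell)\simeq H^1_\et(G,\Z/\ell)^\vee$ you use is linear (universal-coefficient) duality, not Poincaré duality, though your shift-and-twist bookkeeping is correct; and your remark on connectedness is apt, since $\text{kd}(G)$ and the Hopf-algebra argument both presuppose $G$ connected.
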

\begin{proof}
First, we have that $M_1(G)$ is a geometric motive and is $\Z$-additive, therefore we have $M_1(G\times H)\simeq M_1(G)\oplus M_1(H)$. Let us recall that  the weak symmetric algebra of $M_1(G/S)$ is defined as
\begin{align*}
    \text{wSym}(M_1(G)):=\bigoplus_{i=0}^{\text{kd}(G)}\left(M_1(G)^{\otimes i}\right)^{h\mathfrak{S}_i}.
\end{align*}
 By point 1 of Lemma \ref{realization}, $\rho_{\Z/\ell}$ commutes with any additive functor, so we get
\begin{align*}
    \rho_{\Z/\ell}\left(  \bigoplus_{i=0}^{\text{kd}(G)} \left(M_1(G)^{\otimes i}\right)^{h\mathfrak{S}_i}\right)\simeq  \bigoplus_{i=0}^{\text{kd}(G)}\rho_{\Z/\ell}\left(  \left(M_1(G)^{\otimes i}\right)^{h\mathfrak{S}_i}\right).
\end{align*}
Notice that by definition  $\rho_{\Z/\ell}(M)=M\otimes^L \Z/\ell$. Since $-\otimes^L \Z/\ell$ commutes with colimits and is monoidal, see \cite[Definition 5.6]{Ayo}, we obtain an isomorphism 
\begin{align*}
    \rho_{\Z/\ell}\left(\left(M_1(G)^{\otimes i}\right)^{h\mathfrak{S}_i}\right)& \simeq \left(\rho_{\Z/\ell}\left(M_1(G)^{\otimes i}\right)\right)^{h\mathfrak{S}_i} \\
    &\simeq \left( \mathcal{H}_1(G,\Z/\ell)^{\otimes i}\right)^{h\mathfrak{S}_i}.
\end{align*}

So in terms of realization we obtain
$\rho_{\Z/\ell}(\text{wSym}(M_1(G)))\simeq \text{wSym}( \mathcal{H}_1(G,\Z/\ell))$. Notice that if $\ell > \text{kd}(G)$, then we get immediately (using the proof in point 3 of \ref{realization}) that $ \text{wSym}( \mathcal{H}_1(G,\Z/\ell)) \simeq \bigoplus_{i=0}^{\text{kd}(G)}\left(\bigwedge^i \mathcal{H}_1(G,\Z/\ell)\right)[i]$.

In the following part we will prove that for a prime $\ell \neq \text{char}(k)$ and $G$, $H$ two smooth commutative algebraic groups we have an isomorphism $\text{wSym}( \mathcal{H}_1(G \times H,\Z/\ell))\simeq \text{wSym}( \mathcal{H}_1(G,\Z/\ell))\otimes\text{wSym}(  \mathcal{H}_1(H,\Z/\ell))$, which will allow us to conclude when $\ell\leq \text{kd}(G)$. By definition 
\begin{align*}
    \text{wSym}(M_1(G\times H))&\simeq \text{wSym}(M_1(G) \oplus M_1(H))\\
    &=\bigoplus_{i=0}^{\text{kd}(G\times H)}\left(\left(M_1(G) \oplus M_1(H)\right)^{\otimes i}\right)^{h\mathfrak{S}_i}
\end{align*}
Since the homotopy fixed points of a motive $M$ are defined as a homotopy limit, they commute with finite sums. For simplicity we write $M:= M_1(G)$ and $N:=M_1(H)$, thus we have $\left(M^{\otimes n}\oplus N^{\otimes n}\right)^{h\mathfrak{S}_n}\simeq \left(M^{\otimes n} \right)^{h\mathfrak{S}_n}\oplus \left(N^{\otimes n} \right)^{h\mathfrak{S}_n}$. Moreover we have a canonical morphism
\begin{align*}
    \holim_{B\mathfrak{S}_n}(M\oplus N)^{\otimes n} \to (M\oplus N)^{\otimes n}\simeq \bigoplus_{i=0}^n \bigoplus^{\binom{n}{i}} M^{\otimes i} \otimes N^{\otimes n-i},
\end{align*}
where the last isomorphism is obtained by the distributive and commutative properties. Since $\rho_{\Z/\ell}$ commutes with additive functors and limits and is monoidal, we obtain $\rho_{\Z/\ell}\left( \left((M\oplus N)^{\otimes n}\right)^{h\mathfrak{S}_n}\right)\simeq \left((M/\ell\oplus N/\ell)^{\otimes n}\right)^{h\mathfrak{S}_n}$, passing to the realization $\rho_{\Z/\ell}$ and due to the anticommutative structure given by the cup-product, if $n$ and $\ell$ are coprimes, then  
\begin{align*}
\left((M/\ell\oplus N/\ell)^{\otimes n}\right)^{h\mathfrak{S}_n} \simeq \bigwedge^n (M/\ell\oplus N/\ell).
\end{align*}

Notice that the functor $B\mathfrak{S}_i \times B\mathfrak{S}_j \to B(\mathfrak{S}_i \times \mathfrak{S}_j)$ is an equivalence of categories, then 
\begin{align*}
    ((M/\ell)^{\otimes i}\otimes (N/\ell)^{\otimes n-i})^{h(\mathfrak{S}_i \times \mathfrak{S}_{n-i})}& \simeq \holim_{B\mathfrak{S}_i \times B\mathfrak{S}_{n-i}}(M/\ell)^{\otimes i}\otimes (N/\ell)^{\otimes n-i}\\
    &\simeq ((M/\ell)^{\otimes i})^{h\mathfrak{S}_i}\otimes ((N/\ell)^{\otimes n-i})^{h\mathfrak{S}_{n-i}}.
\end{align*}

Due to the anticommutativity of the weak algebra, we have an isomorphism of $\mathbb{F}_\ell$-vector spaces
 \[
  \begin{tikzcd}
\holim_{B\mathfrak{S}_n} \overbrace{M^{\otimes i}\otimes N^{n-i} \oplus \ldots \oplus M^{\otimes i}\otimes N^{n-i}}^{\binom{n}{i}\text{-times}} \arrow{r}\arrow{d}{\simeq} & \bigoplus^{\binom{n}{i}}M^{\otimes i}\otimes N^{\otimes n-i} \\
\holim_{B(\mathfrak{S}_i \times \mathfrak{S}_{n-i})}M^{\otimes i}\otimes N^{n-i} \arrow{ur} &
  \end{tikzcd}
\]


Therefore, we have an isomorphism of graded (anticommutative) algebras $\text{wSym}( \mathcal{H}_1(G \times H,\Z/\ell))\simeq \text{wSym}( \mathcal{H}_1(G,\Z/\ell))\otimes\text{wSym}(  \mathcal{H}_1(H,\Z/\ell)).$ 

 So the final argument is that the addition map $m:G \times G \to G$ induces a morphism of graded algebras over the field $\mathbb{F}_\ell$
\begin{align*}
m^*:\text{wSym}(M_1(G)/\ell) &\to  \text{wSym}(M_1(G)/\ell)\otimes\text{wSym}( M_1(G)/\ell)\\
x &\mapsto x \otimes 1 + 1\otimes x + \sum x_i \otimes y_i.
\end{align*}
By a lemma about the fundamental structure of such algebras, \cite[Lemma 15.2]{Mil84}, we have that $\text{wSym}(M_1(G)/\ell) \simeq \bigoplus_{i=0}^{\text{kd}(G)}\left(\bigwedge^i \mathcal{H}_1(G,\Z/\ell)\right)[i] $.
\end{proof}

 By using the properties of conservative functors associated to change of coefficients described in \cite[Proposition 5.4.12]{CD16}, which for the sake of completeness, we recall such proposition:
 \begin{prop}[{\cite[Proposition 5.4.12]{CD16}}]
 Let $\mathcal{P}$ be the set of prime integers and $S$ be a noetherian scheme of finite dimension. If $R$ is a flat ring over $\Z$, then the family of change of coefficients functors:
 \begin{align*}
     &\rho_\Q: \text{DM}_h(S,R) \to \text{DM}_h(S, R\otimes \Q) \\
     &\rho_{\Z/p}: \text{DM}_h(S,R) \to \text{DM}_h(S, R/p), \quad p \in \mathcal{P}
 \end{align*}
 is conservative.
 \end{prop}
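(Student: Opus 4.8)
We only sketch the argument, which is entirely formal. Since $\rho_\Q$ and each $\rho_{\Z/p}$ are triangulated, a morphism $f$ is sent to an isomorphism by the whole family if and only if its cone is sent to $0$ by the whole family; so the plan is to reduce the statement to the assertion that an object $M\in\text{DM}_h(S,R)$ with $\rho_\Q(M)=0$ and $\rho_{\Z/p}(M)=0$ for every prime $p$ must vanish, and then to run a divisibility argument showing that such an $M$ is its own rationalization.

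First we recall what the realizations compute. Because $R$ is flat over $\Z$, one has $R/p\simeq R\otimes^L_\Z\Z/p$ and $R\otimes\Q\simeq R\otimes^L_\Z\Q$, and $\rho_{\Z/p}=(-)\otimes^L_R(R/p)$, $\rho_\Q=(-)\otimes^L_R(R\otimes\Q)$ are monoidal left adjoints, hence exact and compatible with arbitrary coproducts. Tensoring the triangle $R\xrightarrow{\cdot p}R\to R/p\xrightarrow{+1}$ with $M$ over $R$ gives, exactly as in the proof of Lemma~\ref{realization}, a distinguished triangle
\[
M\xrightarrow{\ \cdot p\ }M\longrightarrow\rho_{\Z/p}(M)\xrightarrow{\ +1\ }.
\]
Thus $\rho_{\Z/p}(M)=0$ forces $\cdot p\colon M\to M$ to be an isomorphism, and composing these isomorphisms shows that $\cdot n\colon M\to M$ is an isomorphism for every integer $n\geq 1$.

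Next I would identify $\rho_\Q(M)$ with a homotopy colimit. Writing $\Q=\varinjlim\bigl(\Z\xrightarrow{\cdot 2}\Z\xrightarrow{\cdot 3}\Z\xrightarrow{\cdot 4}\cdots\bigr)$, the chain $n\mapsto n!$ being cofinal in the filtered poset of positive integers under divisibility, flatness of $R$ yields $R\otimes\Q\simeq\hocolim\bigl(R\xrightarrow{\cdot 2}R\xrightarrow{\cdot 3}\cdots\bigr)$, and since $(-)\otimes^L_R(R\otimes\Q)$ preserves homotopy colimits,
\[
\rho_\Q(M)\simeq\hocolim\bigl(M\xrightarrow{\cdot 2}M\xrightarrow{\cdot 3}M\xrightarrow{\cdot 4}\cdots\bigr),
\]
the canonical map $M\to\rho_\Q(M)$ being the structure map from the first term. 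By the previous step every transition map of this diagram is an isomorphism, so $M\to\rho_\Q(M)$ is an isomorphism; combined with $\rho_\Q(M)=0$ this gives $M=0$, which is what we wanted.

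The delicate point is not the divisibility bookkeeping but the categorical infrastructure it rests on: that $\text{DM}_h(S,R)$ is a triangulated category with small coproducts in which $-\otimes^L_R-$ is exact and coproduct-preserving in each variable, that $\rho_\Q$ and $\rho_{\Z/p}$ are precisely the left adjoints $-\otimes^L_R(R\otimes\Q)$ and $-\otimes^L_R(R/p)$, and that flatness of $R$ over $\Z$ identifies $R\otimes\Q$ and $R/p$ with the derived base changes of $\Q$ and $\Z/p$. All of these hold for $\text{DM}_h$ over a noetherian finite-dimensional base by the construction in \cite{CD16}, and once they are granted the argument above is formal and uniform in $S$ and $R$.
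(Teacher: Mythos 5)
The paper does not actually prove this statement: it is recalled verbatim from Cisinski and D\'eglise \cite[Proposition 5.4.12]{CD16} with no in-paper argument, so there is nothing in the paper to compare your proof against. That said, your reconstruction is correct and is the standard ``arithmetic fracture'' argument: reduce conservativity to showing $M=0$ when all realizations vanish; use the triangle $M\xrightarrow{\,\cdot p\,}M\to M\otimes^L_R(R/p)$ (which needs flatness of $R$ over $\Z$ precisely so that $R/p$ agrees with the derived reduction) to see that $\rho_{\Z/p}(M)=0$ forces $\cdot p$ to be invertible on $M$; conclude every $\cdot n$ is invertible; identify $\rho_\Q(M)$ with $\hocolim\bigl(M\xrightarrow{2}M\xrightarrow{3}\cdots\bigr)$ via flatness and the fact that $-\otimes^L_R-$ preserves homotopy colimits; and invoke the standard fact that a sequential homotopy colimit with all transition maps isomorphisms is the first term, so $M\simeq\rho_\Q(M)=0$. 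The categorical prerequisites you flag (small coproducts in $\text{DM}_h(S,R)$, exactness and colimit-preservation of $-\otimes^L_R-$, realization functors as monoidal left adjoints given by derived base change) do hold in the setting of \cite{CD16}, so the argument is complete. As far as I can tell this matches the spirit and substance of the proof in the cited reference.
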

 
With this proposition, we get an improvement of the results obtained in \cite{AEWH}, getting the following theorem:

\begin{theorem}\label{teoprin}
 Let $k$ be an algebraically closed field and $G/k$ a connected commutative group scheme. Then the morphism    
 \begin{align*}
      \phi_G:M_\et(G) \to \bigoplus_{i=0}^{\text{kd}(G)} \left(M_1(G)^{\otimes i}\right)^{h\mathfrak{S}_i}
 \end{align*}
 is an isomorphism in $\text{DM}_\et(k,\Z)$.
\end{theorem}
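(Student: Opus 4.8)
The plan is to use the conservativity of the family of change-of-coefficients functors $\{\rho_\Q,\rho_{\Z/\ell}\}$ (for $\ell$ ranging over primes invertible in $k$, i.e. all primes $\neq \operatorname{char}(k)$) stated in \cite[Proposition 5.4.12]{CD16}, and to check that $\rho_\Q(\phi_G)$ and each $\rho_{\Z/\ell}(\phi_G)$ is an isomorphism. First I would verify that the target of $\phi_G$ is in fact a geometric (constructible) motive, so that the conservativity statement applies: by point 3 of Lemma \ref{realization} the direct sum defining $\operatorname{wSym}(M_1(G))$ is finite, each summand $(M_1(G)^{\otimes i})^{h\mathfrak{S}_i}$ is a homotopy limit over the finite category $B\mathfrak{S}_i$ of the constructible motive $M_1(G)^{\otimes i}$, and $M_1(G)$ is constructible since $G$ is of finite type. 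Then it suffices to treat the rational and the mod-$\ell$ realizations separately.

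For the rational part, I would invoke Lemma \ref{rational}, which identifies $\rho_\Q\big((M_1(G)^{\otimes i})^{h\mathfrak{S}_i}\big)$ with the image of the symmetrizing projector, i.e. with $\operatorname{Sym}^i M_1(G)_\Q$. Under this identification the morphism $\rho_\Q(\phi_G)$ becomes exactly the comparison map of \cite[Theorem 3.7]{AHP} (applied over the base $S=\operatorname{Spec} k$, where $\pi_0(G)$ is trivial because $G$ is connected), which is an isomorphism in $\text{DM}_\et(k,\Q)\simeq\text{DM}(k,\Q)$. So the rational case reduces entirely to the cited theorem of Ancona--Huber--Pepin Lehalleur.

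For the mod-$\ell$ part I would use the rigidity equivalence $\text{DM}_\et(k,\Z/\ell)\simeq D(k_\et,\Z/\ell)$, which since $k=\bar k$ is just $\Z$-graded $\mathbb{F}_\ell$-vector spaces. Here $\rho_{\Z/\ell}(M_\et(G))$ computes the étale cohomology $H^*_\et(G,\Z/\ell)$ (up to the appropriate shift/degree conventions), and Lemma \ref{lemmacoeffin} identifies $\rho_{\Z/\ell}(M_\et(G))$ with $\bigoplus_{i=0}^{\operatorname{kd}(G)}\big(\bigwedge^i\mathcal{H}_1(G,\Z/\ell)\big)[i]$, while by point 1 of Lemma \ref{realization} (compatibility of $\rho_{\Z/\ell}$ with additive functors, hence with homotopy limits over $B\mathfrak{S}_i$) we get $\rho_{\Z/\ell}\big((M_1(G)^{\otimes i})^{h\mathfrak{S}_i}\big)\simeq(\mathcal{H}_1(G,\Z/\ell)^{\otimes i})^{h\mathfrak{S}_i}$. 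The key point is that this last group is computed by the alternating (cup-product, anticommutative) structure on $H^*_\et(G,\Z/\ell)$ coming from \cite[Lemma 4.1]{BSz} and \cite[Lemma 15.2]{Mil84}; when $i$ and $\ell$ are coprime it is literally $\bigwedge^i\mathcal{H}_1(G,\Z/\ell)$, and the subtle case $\ell\mid i$ is exactly what Lemma \ref{lemmacoeffin} handles via the Hopf-algebra argument on $\operatorname{wSym}(M_1(G)/\ell)$. Thus on realizations $\rho_{\Z/\ell}(\phi_G)$ is, term by term, the isomorphism of Lemma \ref{lemmacoeffin}, and one checks it is induced by the diagonal and the map $\alpha_G$ in the expected way, i.e. that the square relating $\phi_G$ to the comparison isomorphism of Lemma \ref{lemmacoeffin} commutes.

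The main obstacle I expect is precisely this last compatibility: one must make sure that the map $\rho_{\Z/\ell}(\phi_G)$, built abstractly from the $n$-diagonal $\delta^n_{G}$ and $\alpha_G^{\otimes n}$ followed by $(-)^{h\mathfrak{S}_n}$, agrees under the rigidity equivalence with the concrete exterior-algebra isomorphism of Lemma \ref{lemmacoeffin}, which is controlled by the multiplication $m\colon G\times G\to G$ and the resulting Hopf-algebra structure on $H^*_\et(G,\Z/\ell)$. Unwinding this is a diagram chase comparing the comultiplication on $\operatorname{wSym}(M_1(G)/\ell)$ induced by $m$ with the one induced by $\delta^n$, using that $\mathcal{H}_1(G,\Z/\ell)=G[\ell][1]$ sits in the primitives; once the rational case and this mod-$\ell$ identification are in place, conservativity of $\{\rho_\Q\}\cup\{\rho_{\Z/\ell}\}_\ell$ finishes the proof.
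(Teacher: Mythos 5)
Your proposal follows essentially the same route as the paper's own proof: reduce by the conservativity of the family $\{\rho_\Q,\rho_{\Z/\ell}\}$ from \cite[Proposition 5.4.12]{CD16}, handle $\rho_\Q(\phi_G)$ via Lemma~\ref{rational} and \cite[Theorem 3.7]{AHP}, and handle $\rho_{\Z/\ell}(\phi_G)$ via Lemma~\ref{lemmacoeffin} and the exterior-algebra description of $H^*_\et(G,\Z/\ell)$. In fact you are somewhat more careful than the paper on the final point: the paper's written argument shows that the source and target of $\rho_{\Z/\ell}(\phi_G)$ are abstractly isomorphic, but does not spell out why $\rho_{\Z/\ell}(\phi_G)$ itself is that isomorphism, and your identification of the required compatibility (between the diagonal/$\alpha_G$ construction of $\phi_G$ and the Hopf-algebra structure coming from the multiplication $m\colon G\times G\to G$) is precisely the check that is needed to close that gap.
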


\begin{proof}
We split the proof into two steps: first we start by looking at the functor $\rho_\Q$. Applying Lemma \ref{rational}, we obtain that the induced morphism by $\rho_\Q(\phi_G)$ is the morphism $\varphi_G$ given in \cite[Definition 3.1]{AHP} and \cite[Theorem 3.3]{AHP}, with $S=k$, which is shown to be an isomorphism in $\text{DM}_\et(k,\Q)$. The reason behind this is the following: $\rho_\Q(M_\et(G)) = M(G)$ and by Lemma \ref{rational} we have $\rho_\Q\left(\bigoplus_{i=0}^{\text{kd}(G)} \left(M_1(G)^{\otimes i}\right)^{h\mathfrak{S}_i}\right) \simeq \bigoplus_{i=0}^{\text{kd}(G)} \text{Sym}^n(M_1(G))$ and finally, by construction of the morphisms $\phi_G$ and $\varphi_G$ of Definition \ref{mor} and \cite[Definition 3.1]{AHP}, and the uniqueness part of \cite[Theorem 2.8]{AHP} we get that $\rho_\Q(\phi_G)=\varphi_G$.

For the second step, we fix a prime number $\ell \neq \text{char}(k)$. Let us consider the functor $\rho_{\Z/\ell}$ and let us compute the elements of
\begin{align*}
    \rho_{\Z/\ell}(\phi_G): \rho_{\Z/\ell}\left( M_\et(G)\right) \to \rho_{\Z/\ell}\left(  \bigoplus_{i=0}^{\text{kd}(G)} \left(M_1(G)^{\otimes i}\right)^{h\mathfrak{S}_i}\right).
\end{align*}

Here we are assuming that the realization functor is covariant (sending the elements to homological instead of cohomological objects).  By \cite[Theorem 4.1]{BSz} we have that 
\begin{align*}
   H^*_\et(G,\Z/\ell) \simeq \bigoplus_{i=0}^{\text{kd}(G)}\left(\bigwedge^i \mathcal{H}^1(G,\Z/\ell)\right)[i], 
\end{align*}
where $\ell$ is a prime number not equal to $\text{char}(k)$. Consider the duality operator $D_k$ on $D(k_\et,\Z/\ell)$, let $f:G \to k$ be the structure morphism and $d$ the dimension of $G$. By definition we have 
\begin{align*}
 D_k\left(\rho_{\Z/\ell}\left( M_\et(G)\right) \right)&\simeq D_k f_! f^! (\Z/\ell)_k \\
 &\simeq f_* D_G (\Z/\ell)_G \\
 &\simeq (M_\et(G)_\ell)(d)[2d].
\end{align*}
Here the first isomorphism $\rho_{\Z/\ell}\left( M_\et(G)\right)\simeq f_! f^! (\Z/\ell)_k$ is because the  realization functor commutes with the six functors formalism, see \cite[A.1.16]{CD16}, while the second are third are given by \cite[Exposé XVII]{gab}

As we stated in Lemma \ref{lemmacoeffin}, one has that $\rho_{\Z/\ell}(M_\et(G))\simeq \bigoplus^{\text{kd}(G)}_{i=0}\left(\bigwedge^i \mathcal{H}_1(G,\Z/\ell) \right)[i]$ for all $\ell \neq \text{char}(k)$, whose dual is isomorphic to $\bigoplus_{i=0}^{\text{kd}(G)}\left(\bigwedge^i \mathcal{H}^1(G,\Z/\ell)\right)[i]$,  thus by conservative properties given in \cite[Proposition 5.4.12]{CD16}, we conclude that 
\begin{align*}
    M_\et(G) \xrightarrow{\simeq} \bigoplus_{i=0}^{\text{kd}(G)} \left(M_1(G)^{\otimes i}\right)^{h\mathfrak{S}_i} \in \text{DM}_\et(k,\Z).
\end{align*}


\end{proof}

\begin{theorem}\label{relat}
    Let $S$ be a good enough scheme in the sense of Definition \ref{gooden}, and let $G$ be a connected commutative scheme over $S$. Then the morphism $\phi_G$ given in Definition \ref{mor} is an isomorphism.
\end{theorem}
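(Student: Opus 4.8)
The plan is to deduce the relative statement from the absolute one, Theorem~\ref{teoprin}, by testing $\phi_G$ on geometric fibres. Since $S$ is noetherian of finite dimension we have $\text{DM}_\et(S,\Z)\simeq\textbf{DA}^\et(S,\Z)$, so I would work in $\textbf{DA}^\et(S,\Z)$; as $S$ is good enough in the sense of Definition~\ref{gooden}, Lemma~\ref{consCD} tells us that $\phi_G$ is an isomorphism as soon as $i_{\bar s}^*(\phi_G)$ is an isomorphism for every geometric point $i_{\bar s}\colon\bar s\to S$. Note that by Lemma~\ref{realization}(3) the sum defining the target of $\phi_G$ is already finite, $\bigoplus_{i\ge 0}(M_1(G/S)^{\otimes i})^{h\mathfrak S_i}=\bigoplus_{i=0}^{\text{kd}(G/S)}(M_1(G/S)^{\otimes i})^{h\mathfrak S_i}$, so there is no convergence issue and the statement is the one announced in the introduction.

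Next I would check that $i_{\bar s}^*(\phi_G)$ is, up to the canonical identifications, the morphism $\phi_{G_{\bar s}}$ attached to $G_{\bar s}$ over the algebraically closed field $\kappa(\bar s)$ (again a connected commutative group scheme, and the terms of index $i>\text{kd}(G_{\bar s})$ in the pulled-back target vanish by Lemma~\ref{realization}(3) over $\bar s$). Every building block of Definition~\ref{mor} is natural in the base: $i_{\bar s}^*$ is symmetric monoidal, it sends $M_\et^S(X)$ to $M_\et^{\bar s}(X_{\bar s})$, it sends the representable sheaf $\underline{G/S}$ of Definition~\ref{1motive} to $\underline{G_{\bar s}/\bar s}$ and hence $M_1(G/S)$ to $M_1(G_{\bar s})$, and it is compatible with $\alpha_{G/S}$, with tensor powers, with the diagonals $\delta^n_{G/S}$ and with the group law of $G$. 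The one step where care is genuinely needed --- and the main obstacle of the argument --- is that $i_{\bar s}^*$, being a left adjoint, need not commute with the homotopy fixed-point functor $(-)^{h\mathfrak S_i}=\holim_{B\mathfrak S_i}$; so one cannot simply assert $i_{\bar s}^*\big((M_1(G/S)^{\otimes i})^{h\mathfrak S_i}\big)\simeq\big(M_1(G_{\bar s})^{\otimes i}\big)^{h\mathfrak S_i}$.

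To circumvent this I would avoid computing $i_{\bar s}^*$ of a raw homotopy fixed point and instead apply the conservative family of coefficient-change functors $\rho_\Q$ and the $\rho_{\Z/\ell}$ of \cite[Proposition~5.4.12]{CD16} over $\bar s$: it suffices to prove that $\rho_\Q(i_{\bar s}^*\phi_G)$ and $\rho_{\Z/\ell}(i_{\bar s}^*\phi_G)$ are isomorphisms, and coefficient change commutes with $i_{\bar s}^*$. Rationally, by Lemma~\ref{rational} the functor $(-)^{h\mathfrak S_i}$ is the image of an idempotent --- preserved both by $i_{\bar s}^*$ and by $\rho_\Q$ --- and $\rho_\Q(\phi_G)$ is, by construction and by the uniqueness in \cite[Theorem~2.8]{AHP}, the morphism $\varphi_G$ of \cite{AHP}, already an isomorphism over $S$ by \cite[Theorem~3.7]{AHP}. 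For $\Z/\ell$-coefficients ($\ell$ invertible on $S$) one is in $\text{DM}_\et(S,\Z/\ell)\simeq D(S_\et,\Z/\ell)$; by Lemma~\ref{realization}(1) and the computation carried out in Lemma~\ref{lemmacoeffin}, $\rho_{\Z/\ell}$ takes $(M_1(G/S)^{\otimes i})^{h\mathfrak S_i}$ to $(\mathcal H_1(G/S,\Z/\ell)^{\otimes i})^{h\mathfrak S_i}$ and, using the relative form of \cite[Lemma~4.1]{BSz} together with \cite[Lemma~15.2]{Mil84} and smooth base change for $\mathcal H_1(G/S,\Z/\ell)\cong G[\ell^n]$, to $\bigwedge^i\mathcal H_1(G/S,\Z/\ell)$ --- a locally constant constructible complex whose formation does commute with the exact stalk functor $i_{\bar s}^*$. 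Hence $i_{\bar s}^*\rho_{\Z/\ell}(\phi_G)=\rho_{\Z/\ell}(\phi_{G_{\bar s}})$, an isomorphism by Lemma~\ref{lemmacoeffin} (equivalently, by the second step of the proof of Theorem~\ref{teoprin}) since $\kappa(\bar s)$ is algebraically closed. Combining the two coefficient cases, $i_{\bar s}^*(\phi_G)$ is an isomorphism for every geometric point, so Lemma~\ref{consCD} gives the result. Equivalently, one may run this last paragraph directly over $S$, using \cite[Proposition~5.4.12]{CD16} in place of Lemma~\ref{consCD} and detecting the $\Z/\ell$-isomorphism on the geometric stalks of étale sheaves on $S$; this is the same computation.
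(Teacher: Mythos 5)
Your strategy is the same as the paper's — reduce to geometric points via Lemma~\ref{consCD} and invoke Theorem~\ref{teoprin} on the fibres — but you identify and address a subtlety that the paper passes over quickly. The paper's proof establishes that $f^*M_1(G/S)\simeq M_1(G_T/T)$ for a morphism $f\colon T\to S$ by checking the isomorphism after $\rho_\Q$ and $\rho_{\Z/\ell^n}$, and then directly asserts ``In particular we obtain that $\phi_G$ is natural over the base,'' from which $i_{\bar s}^*(\phi_G)=\phi_{G_{\bar s}}$. What is left implicit there is precisely the point you raise: the target of $\phi_G$ involves the homotopy limit $(-)^{h\mathfrak S_i}$, and the left adjoint $i_{\bar s}^*$ has no reason a priori to commute with it, so knowing that $i_{\bar s}^*$ preserves $M_1$ alone does not by itself give the identification of targets. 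Your circumvention --- apply $\rho_\Q$ and $\rho_{\Z/\ell}$ of \cite[Proposition~5.4.12]{CD16} first, use Lemma~\ref{rational} to turn the rational homotopy fixed points into images of idempotents (which are preserved by $i_{\bar s}^*$ and by $\rho_\Q$), and use Lemma~\ref{realization}(1)/Lemma~\ref{lemmacoeffin} to turn the mod-$\ell$ picture into exterior powers of locally constant constructible sheaves (which do commute with stalks) --- is exactly the ingredient the paper uses when justifying $f^*M_1(G/S)\simeq M_1(G_T/T)$, only you push it through the whole morphism $\phi_G$ rather than only the 1-motive. A further small streamlining in your argument is that for $\Q$-coefficients you avoid any reduction to geometric points altogether, invoking \cite[Theorem~3.7]{AHP} directly over $S$, whereas the paper still passes through the fibres. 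Your presentation of the $\Z/\ell$ step is a little compressed (``the relative form of \cite[Lemma~4.1]{BSz} \ldots and smooth base change''), and strictly speaking Lemma~\ref{lemmacoeffin} is stated over a field, so a reader would want that relative form spelled out; but the strategy is sound, the citations are the right ones, and the proof you propose is a more carefully justified version of the paper's own.
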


\begin{proof}
    Consider a morphism of good enough schemes $f:T \to S$, we have that $f^* M_1(G/S) \simeq M_1(G_T/T) \in \text{DM}_\et(T,\Z)$. As we have following, we will split the proof in two: first we have it for $\text{DM}_\et(T,\Q)$ and then for $\text{DM}_\et(T,\Z/\ell)$ for all prime integer $\ell$ invertible in $T$.  According to \cite[Proposition 2.7]{AHP}, one has that $f^* M_1(G/S)_\Q \simeq M_1(G_T/T)_\Q \in \text{DM}_\et(T,\Q)$. On the other hand, having shown that $\rho_{\Z/\ell^n}(M_1(G/S))\simeq \underline{G/S}[\ell^n][-1]$, and that by the universal property of fibre product we have $f^*\underline{G/S}\simeq \underline{G_T/T}$. Invoking \cite[Théorème 6.6(A)]{Ayo} one gets for a quasi-projective morphism $f^* \circ \rho_{\Z/\ell^r} \simeq \rho_{\Z/\ell^r}\circ f^*$ , thus we get the following isomorphism 
    \begin{align*}
        \rho_{\Z/\ell^n}(f^*M_1(G/S))&\simeq f^*\left( \rho_{\Z/\ell^n}(M_1(G/S))\right) \\
        &\simeq f^*\left(\underline{G/S}[\ell^n][1]\right) \\
        &\simeq f^*\left(\underline{G/S}\right)[\ell^n][1]\\
        &\simeq \underline{G_T/T}[\ell^n][1] =  \rho_{\Z/\ell^n}(M_1(G_T/T)).
    \end{align*}
    As $\ell$ is any prime number, then we conclude that $f^* M_1(G/S) \simeq M_1(G_T/T) \in \text{DM}_\et(T,\Z)$. In particular we obtain that $\phi_G$ is natural over the base. By the previous fact, we get that the morphism $\phi_{G_T}:=f^*(\phi_G)$ acts as follows:
    \begin{align*}
        M^S_\et(G) \xrightarrow{\phi_G}  \bigoplus_{i=0}^{\text{kd}(G/S)} \left(M_1(G/S)^{\otimes i}\right)^{h\mathfrak{S}_i}  \leadsto  M^T_\et(G_T) \xrightarrow{\phi_{G_T}}  \bigoplus_{i=0}^{\text{kd}(G_T/T)} \left(M_1(G_T/T)^{\otimes i}\right)^{h\mathfrak{S}_i}. 
    \end{align*}
    
    In this way, for any geometric point $i_{\bar{s}}:\bar{s}\to S$ we have an isomorphism of motives $i^*_{\bar{s}}M_1(G/S) \simeq M_1(G_{\bar{s}}/\bar{s})$, and then by the previous remark, $i_{\bar{s}}^*(\phi_G)=\phi_{G_{\bar{s}}}$ for any geometric point $\bar{s}$ of $S$, the map
    \begin{align*}
        M^{\bar{s}}_\et(G_{\bar{s}}) \xrightarrow{\phi_{G_{\bar{s}}}}  \bigoplus_{i=0}^{\text{kd}(G_{\bar{s}}/{\bar{s}})} \left(M_1(G_{\bar{s}}/{\bar{s}})^{\otimes i}\right)^{h\mathfrak{S}_i}
    \end{align*}
    turns out to be an isomorphism by Theorem \ref{teoprin}. By Lemma \ref{consCD}, the family of functors $i_{\bar{s}}^*$ is conservative, therefore $\phi_{G/S}$ is an isomorphism. 
    \end{proof}

\begin{remark}
 The direct factor $h_n(G/S)=\phi_{G/S}^{-1}\left(\left(M_1(G/S)^{\otimes n}\right)^{h\mathfrak{S}_n}\right)$ of $M^S_\et(G)$ is characterized as follows: for $m \in \Z$ that is equals to 1 modulo $o(\pi_0(G/S))$ (see Definition \ref{order}), the map $M_\et([m])$ operates on $h_n(G/S)$ as $m^n \cdot \text{id}$. This is a consequence of \cite[Lemma 2.6(1)]{AHP}.
\end{remark}

\begin{theorem}
Suppose that $S$ is a good enough scheme. Then for every bi-degree $(m,n)\in \Z^2$ the relative étale cohomology groups of $G$ in degrees $(m,n)$ with integral coefficients decomposes as
\begin{align*}
    H_{M,\et}^m(G/S,\Q(n)) \simeq \bigoplus_{j=0}^{\text{kd}(G/S)} \ H_{M,\et}^{m,j}(G/S,\Q(n)),
\end{align*}
where $$H_{M,\et}^{m,j}(G/S,\Q(n))=\left\{Z \in H_{M,\et}^m(G/S,\Q(n)) \ | \ [m]^*Z = m^j Z, \ \forall m \equiv 1 (\text{mod} \ o(\pi_0(G/S)))\right\}.$$
\end{theorem}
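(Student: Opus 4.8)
The plan is to transport the decomposition of the motive $M^S_\et(G)$ established above through the functor defining relative étale motivic cohomology, and then to recognise the resulting summands as simultaneous eigenspaces of the multiplication operators $[m]^*$.

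First I would recall that, by Theorem \ref{relat} together with the Remark following it, there is an isomorphism $M^S_\et(G)\simeq\bigoplus_{j=0}^{\text{kd}(G/S)} h_j(G/S)$ in $\text{DM}_\et(S,\Z)$ — a \emph{finite} direct sum by Lemma \ref{realization}(3) — and that for every integer $m$ with $m\equiv 1\pmod{o(\pi_0(G/S))}$ the endomorphism $M_\et([m])$ of $M^S_\et(G)$ induced by multiplication by $m$ on $G$ restricts to $m^j\cdot\mathrm{id}$ on the direct factor $h_j(G/S)$. Applying the (contravariant, representable) functor $\mathrm{Hom}_{\text{DM}_\et(S,\Z)}(-,\Q_S(n)[m])$, which computes $H^m_{M,\et}(G/S,\Q(n))$ and carries this finite direct sum of motives to the corresponding finite direct sum of $\Q$-vector spaces, I would put
\begin{align*}
H^{m,j}_{M,\et}(G/S,\Q(n))\;:=\;\mathrm{Hom}_{\text{DM}_\et(S,\Z)}\!\bigl(h_j(G/S),\,\Q_S(n)[m]\bigr),
\end{align*}
so that $H^m_{M,\et}(G/S,\Q(n))\simeq\bigoplus_{j=0}^{\text{kd}(G/S)} H^{m,j}_{M,\et}(G/S,\Q(n))$. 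Since $[m]^*$ is, by functoriality, precomposition with $M_\et([m])$, it respects this decomposition and acts on the $j$-th summand as multiplication by $m^j$ whenever $m\equiv 1\pmod{o(\pi_0(G/S))}$; in particular $H^{m,j}_{M,\et}(G/S,\Q(n))$ is contained in the subspace described on the right-hand side of the statement.

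It remains to establish the reverse inclusion, and this is the only step needing a genuine argument — the one that forces rational coefficients. I would fix a single integer $m_0\equiv 1\pmod{o(\pi_0(G/S))}$ with $m_0\ge 2$ (for instance $m_0=1+o(\pi_0(G/S))$, or $m_0=2$ when $\pi_0(G/S)$ is trivial), so that $1,m_0,m_0^2,\dots,m_0^{\text{kd}(G/S)}$ are pairwise distinct integers. Given $Z=\sum_i Z_i$ with $Z_i\in H^{m,i}_{M,\et}(G/S,\Q(n))$ and $[m_0]^*Z=m_0^jZ$, comparing components yields $(m_0^i-m_0^j)Z_i=0$ for every $i$; since the groups in question are $\Q$-vector spaces, this forces $Z_i=0$ for $i\ne j$, i.e.\ $Z\in H^{m,j}_{M,\et}(G/S,\Q(n))$. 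Together with the previous inclusion this identifies each summand with $\{Z : [m]^*Z=m^jZ \text{ for all } m\equiv 1\ (\mathrm{mod}\ o(\pi_0(G/S)))\}$, which is exactly the assertion. I expect the (mild) main obstacle to be precisely this separation of eigenvalues: it is what makes the statement one about $\Q(n)$-coefficients rather than $\Z(n)$-coefficients, since the differences $m^i-m^j$ are not invertible in $\Z$; everything else is formal, given Theorem \ref{relat}, Lemma \ref{realization}, and the cited Remark.
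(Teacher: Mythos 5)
Your argument is correct and fills out, along the same lines, the paper's one-sentence proof: apply the contravariant additive functor computing $H^m_{M,\et}(-,\Q(n))$ to the finite direct-sum decomposition $M^S_\et(G)\simeq\bigoplus_j h_j(G/S)$ of Theorem \ref{relat}, and then use the Remark following it to identify each summand with the corresponding $m^j$-eigenspace of $[m]^*$. The eigenvalue-separation step you supply (a single $m_0\geq 2$ with $m_0\equiv 1\pmod{o(\pi_0(G/S))}$ already pins down the eigenspace over $\Q$) is exactly what the paper's appeal to ``the definition of étale motivic cohomology'' leaves implicit, and your observation that this step genuinely requires $\Q$-coefficients also explains the discrepancy between the phrase ``with integral coefficients'' in the theorem's text and the $\Q(n)$ appearing in its displayed formula.
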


\begin{proof}
This follows from the decomposition $M_\et^S(G)= \bigoplus_{n=0}^\text{kd(G/S)}h_n(G/S)$ thanks to Theorem \ref{relat} and the definition of étale motivic cohomology.
\end{proof}

Let $A$ be an abelian variety over a field $k$ whose cohomological dimension if finite, the question which arises naturally is if the isomorphism $M_\et(A)\to \bigoplus_{i=0}^{\text{kd}(A)} \left(M_1(A)^{\otimes i}\right)^{h\mathfrak{S}_i}$ comes from an morphism in $\ch_\et(k)$.

\begin{prop}
    Let $A$ be an abelian variety of dimension $g$ over a field $k$ with $\text{cd}(k)<\infty$, then the following are equivalent:
    \begin{enumerate}
        \item the isomorphism $M_\et(A)\to\bigoplus_{i=0}^{2g} \left(M_1(A)^{\otimes i}\right)^{h\mathfrak{S}_i} $ is a morphism in the category $\ch_\et(k)$.
        \item The exist an element $h \in \ch_\et(k)$ such that $h \simeq M_1(A) \in \ch_\et(k)$.
    \end{enumerate}
\end{prop}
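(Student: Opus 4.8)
The plan is to treat the two implications separately: $(1)\Rightarrow(2)$ is formal, and $(2)\Rightarrow(1)$ carries the content.

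For $(1)\Rightarrow(2)$, I would isolate from $\bigoplus_{i=0}^{2g}\left(M_1(A)^{\otimes i}\right)^{h\mathfrak{S}_i}$ the summand indexed by $i=1$. Since $\mathfrak{S}_1$ is trivial this summand is $M_1(A)$ itself, and its structural inclusion and projection are morphisms of $\ch_\et(k)$; composing them with $\phi_A$ and $\phi_A^{-1}$ (morphisms of $\ch_\et(k)$ by hypothesis) produces an idempotent $e_1\in\mathrm{End}_{\ch_\et(k)}(M_\et(A))$, i.e. an étale algebraic correspondence, whose image is isomorphic to $M_1(A)$. As $\ch_\et(k)$ is pseudo-abelian, $h:=(M_\et(A),e_1)$ is an object of $\ch_\et(k)$ with $h\simeq M_1(A)$ in $\ch_\et(k)$, which is $(2)$.

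For $(2)\Rightarrow(1)$, write $M_1(A)\simeq h$ with $h\in\ch_\et(k)$. Because the embedding $\ch_\et(k)\hookrightarrow\text{DM}_\et(k,\Z)$ is monoidal and $\ch_\et(k)$ is idempotent complete, $M_1(A)^{\otimes i}\simeq h^{\otimes i}$ is again in $\ch_\et(k)$ for every $i$, with the permutation action of $\mathfrak{S}_i$ given by morphisms of $\ch_\et(k)$; and by $(2)$ together with full faithfulness the comparison map $\alpha_A\colon M_\et(A)\to M_1(A)$ of Definition \ref{mor} is an étale algebraic correspondence as well. The crux is then to show that the homotopy fixed points $\left(M_1(A)^{\otimes i}\right)^{h\mathfrak{S}_i}$ — which by Theorem \ref{teoprin} are exactly the geometric summands $h_i(A)$ of $M_\et(A)$ — lie in the image of $\ch_\et(k)$, equivalently that the idempotents $e_i\in\mathrm{End}_{\text{DM}_\et}(M_\et(A))$ cutting them out are étale algebraic correspondences. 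I would build these from $e_1$ and $\alpha_A$ using the commutative bialgebra structure on $M_\et(A)=h_\et(A)$ carried by the group law: the comultiplication $\Delta^{(i)}$ (the iterated diagonal) and the multiplication $m^{(i)}$ (induced by $m\colon A\times A\to A$) are morphisms of $\ch_\et(k)$, so each composite $M_\et(A)\xrightarrow{\Delta^{(i)}}M_\et(A)^{\otimes i}\xrightarrow{\alpha_A^{\otimes i}}M_1(A)^{\otimes i}\xrightarrow{\iota_1^{\otimes i}}M_\et(A)^{\otimes i}\xrightarrow{m^{(i)}}M_\et(A)$ is algebraic; and one checks after realization — using Lemma \ref{lemmacoeffin}, the exponential formula $\mathrm{wSym}(M_1(A)\oplus M_1(A))\simeq\mathrm{wSym}(M_1(A))^{\otimes 2}$ established in its proof, and the eigenvalue characterization of $h_i(A)$ in the remark following Theorem \ref{relat} — that a suitable $\Z$-linear combination of such composites equals $e_i$. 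The truncation $\mathrm{kd}(A)=2g$ from Lemma \ref{realization}(3) makes this a finite computation, and the hypothesis $\mathrm{cd}(k)<\infty$ is what lets one verify the resulting identities of correspondences after passing to geometric points or to a universal domain, through Lemma \ref{consCD} and the improved Manin principle of Theorem \ref{immanin}. Granting all $h_i(A)\in\ch_\et(k)$, the target of $\phi_A$ is an object of $\ch_\et(k)$; its source $M_\et(A)=h_\et(A)$ always is, so full faithfulness forces $\phi_A$ to be a morphism of $\ch_\et(k)$, which is $(1)$.

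I expect the main obstacle to be precisely the step just described: passing from ``$M_1(A)$ is an étale Chow motive'' to ``each weak symmetric power $\left(M_1(A)^{\otimes i}\right)^{h\mathfrak{S}_i}$ is an étale Chow motive''. Rationally this is immediate, since $\tfrac{1}{i!}\sum_{\sigma\in\mathfrak{S}_i}\sigma$ is an available idempotent and the homotopy fixed points agree with honest invariants (Lemma \ref{rational}); integrally one has no such averaging operator, and the only structure that still forces the projectors $e_i$ to exist without denominators is the anticommutative Hopf-algebra structure on $h_\et(A)$ coming from the group law, combined with the Kimura-type truncation at $2g$. Pinning down the correct $\Z$-integral combinatorics of this extraction, and making sure it reproduces the homotopy-fixed-point summands rather than the image of the naive symmetrizer, is where the real work lies.
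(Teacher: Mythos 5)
Your implication $(1)\Rightarrow(2)$ is fine and matches the paper: isolate the $i=1$ summand, get an idempotent $e_1$ on $M_\et(A)=h_\et(A)$, and invoke pseudo-abelianity of $\ch_\et(k)$.

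For $(2)\Rightarrow(1)$ your route is genuinely different from the paper's, and — as you yourself concede — it is not carried out, so this direction has a real gap. You propose to manufacture the projectors $e_i$ as $\Z$-linear combinations of composites built from the iterated diagonal, the group law, $\alpha_A$ and $\iota_1$, and to verify the identities after realization; this is a Deninger--Murre/K\"unnemann-type construction, and you leave the crucial integral combinatorics explicitly open. But the construction is not only incomplete, it is unnecessary, because you already have the ingredient that collapses the problem: full faithfulness of $\ch_\et(k)^{op}\hookrightarrow\text{DM}_\et(k,\Z)$ gives $\mathrm{End}_{\text{DM}_\et(k,\Z)}(M_\et(A))\simeq\mathrm{End}_{\ch_\et(k)}(h_\et(A))=\CH^g_\et(A\times A)$. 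Every idempotent endomorphism of $M_\et(A)$ — in particular each $e_i=\phi_A^{-1}\circ\iota_i\circ\pi_i\circ\phi_A$ coming out of Theorem~\ref{teoprin} (or Theorem~\ref{relat} when $k$ is not algebraically closed but $\text{cd}(k)<\infty$) — is therefore automatically an \'etale correspondence; there is no further integrality to prove, and no need to exhibit it as a combination of bialgebra operations. Pseudo-abelianity then gives $(A,e_i)\in\ch_\et(k)$ isomorphic to $\left(M_1(A)^{\otimes i}\right)^{h\mathfrak{S}_i}$, and full faithfulness identifies $\phi_A$ with a morphism of $\ch_\et(k)$. For comparison, the paper's own argument for $(2)\Rightarrow(1)$ is different from both your route and this shortcut: it writes $h=(X,p)$, passes to the $\mathfrak{S}_n$-equivariant idempotent $p^{\otimes n}\in\mathrm{Corr}^0_\et(X^n,X^n)$, and declares $(h^{\otimes n})^{h\mathfrak{S}_n}=(X^n,(p^{\otimes n})^{h\mathfrak{S}_n})$ as an object of $\ch_\et(k)$ directly; your Hopf-algebra construction plays no role there.
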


\begin{proof}
Let $A$ be an abelian variety over $k$ of dimension $g$, and consider the map $$M_\et(A) \to \bigoplus_{i=0}^{2g} \left(M_1(A)^{\otimes i}\right)^{h\mathfrak{S}_i} \in \text{DM}_\et(k,\Z).$$ 

Let us remark that $h_\et(A)\simeq M_\et(A)$ by the full embedding of $\ch_\et(k)^{op}\hookrightarrow  \text{DM}_\et(k,\Z)$. If we assume (1), then (2) follows immediately since $M_1(A)$ is a direct factor of the finite sum $\bigoplus_{i=0}^{2g} \left(M_1(A)^{\otimes i}\right)^{h\mathfrak{S}_i} \in \ch_\et(k)$.

If we assume (2), exists an element $h\in \ch_\et(k)$ such that $h\simeq M_1(A)$, then $h^{\otimes i} \in \ch_\et(k)$ for all $i\geq 0$. We will assume that $h=(X,p)$ for some $X \in \text{SmProj}_k$ and $p \in \text{Corr}_\et^0(X,X)$. In other words, $p \in \text{End}_{\ch_\et(k)}(h(X)) \simeq \text{End}_{\text{DM}_\et(k,\Z)}(M(X))$, therefore $p^{\otimes n} \in \text{End}_{\ch_\et(k)}(h(X)^{\otimes n})$, thus we define $(p^{\otimes n})^{h\mathfrak{S}_n}$ as the image of $p^{\otimes n}$ in $\text{End}_{\text{DM}_\et(k,\Z)}((M(X)^{\otimes n})^{h\mathfrak{S}_n})$, we then define the motive
\begin{align*}
    \left(h^{\otimes n}\right)^{h\mathfrak{S}_n}:= (\overbrace{X \times \ldots \times X}^{n\text{-times}},(p^{\otimes n})^{h\mathfrak{S}_n}) \in \ch_\et(k)
\end{align*}
Then we see that morphism $M_\et(A) \to \bigoplus_{i=0}^{2g} \left(M_1(A)^{\otimes i}\right)^{h\mathfrak{S}_i}$ is in $\ch_\et(k)$.
\end{proof}

Changing the coefficients in the proof of Theorem \ref{teoprin}, we obtain that $h_\et(A)$ admits an integral Chow-Künneth decomposition in $\ch_\et(k)$ if the 1-motive $M_1(A)$ belongs to the category $\ch_\et(k)$. The previous statement can be seen as well as a consequence of the pseudo-abelianity of the category $\ch_\et(k)$.

In order to give an example of an integral étale motive with Chow-Künneth decomposition, we should recall some results coming from the classical theory of abelian varieties. We have the following results:
\begin{lemma}\label{variab}
    Let $C$ be a smooth projective curve over a finite cohomological dimension field $k$ with a $k$-rational point, and let $J(C)$ be the Jacobian of $C$, then:
    \begin{enumerate}
        \item the motive $M_\et(C)$ can be decomposed as
        $M_\et(C)\simeq \mathbf{1} \oplus h^1_\et(C) \oplus \mathbf{1}(1)[2].$
        \item  If $C'$ is another smooth projective curve over $k$, then 
        $$\ho_{\ch_\et(k)}\left(h^1_\et(C),h^1_\et(C')\right)\simeq \ho_{AV}\left(J(C),J(C')\right)[1/p].$$
        \item The motives $h_\et^1(C)$ and $M_1(J(C))$ are isomorphic.
    \end{enumerate}
\end{lemma}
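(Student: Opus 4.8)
The plan is to transport the classical Chow--Künneth decomposition of a pointed curve into the étale world for part (1), and then to read off parts (3) and (2) from it using the conservativity and realization results of the previous sections together with the classical theory of Jacobians.

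For (1) I would use the rational point $e\in C(k)$ to define the classes $\pi_0:=[e\times C]$ and $\pi_2:=[C\times e]$ in $\text{Corr}^0_\et(C,C)=\CH^1_\et(C\times C)$. Because the motivic complex $\Z(1)$ is $\mathbb G_m[-1]$ even étale-locally, one has $\CH^1_\et(X)=\text{Pic}(X)$ for every smooth $X$, so $\pi_0$ and $\pi_2$ are defined with integral coefficients and the verification that they are orthogonal idempotents with $\pi_2\circ\pi_0=0$ is exactly the computation of \cite[\S 2.1]{MNP}. Putting $\pi_1:=\Delta_C-\pi_0-\pi_2$ and $h^1_\et(C):=(C,\pi_1,0)$ gives the decomposition in $\ch_\et(k)$, which then holds in $\text{DM}_\et(k,\Z)$ via the full embedding $\ch_\et(k)^{op}\hookrightarrow\text{DM}_\et(k,\Z)$; the point $e$ and the structure morphism identify $(C,\pi_0,0)\simeq\mathbf 1$ and, dualizing, $(C,\pi_2,0)\simeq\mathbf 1(1)[2]$.

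For (3), observe that a field of finite cohomological dimension is good enough in the sense of Definition \ref{gooden}, so Theorem \ref{relat} applied to $S=\spc(k)$ exhibits $M_1(J(C))$ as a direct summand of $M_\et(J(C))=h_\et(J(C))$; in particular $M_1(J(C))\in\ch_\et(k)$. The point $e$ gives an Abel--Jacobi morphism $C\to J(C)$, hence a composite $M_\et(C)\to M_\et(J(C))\xrightarrow{\alpha_{J(C)}}M_1(J(C))$, and restricting to the $h^1$-summand of (1) yields $\psi\colon h^1_\et(C)\to M_1(J(C))$. To see that $\psi$ is an isomorphism I would apply the conservative family $\{\rho_\Q\}\cup\{\rho_{\Z/\ell}\}$ of \cite[Proposition 5.4.12]{CD16}: after $\rho_\Q$ the morphism $\psi$ is the classical isomorphism $h^1(C)_\Q\simeq h_1(J(C))_\Q$; and for $\ell\neq\text{char}(k)$, Lemma \ref{realization}(2) gives $\rho_{\Z/\ell}(M_1(J(C)))\simeq J(C)[\ell][1]$, while (1) identifies $\rho_{\Z/\ell}(h^1_\et(C))$ with the middle summand of $\rho_{\Z/\ell}(M_\et(C))$, i.e. with $H_1^\et(C,\Z/\ell)[1]$, and on this summand $\psi$ induces the canonical isomorphism $H_1^\et(C,\Z/\ell)\xrightarrow{\simeq}J(C)[\ell]$ expressing that a smooth projective curve and its Jacobian have the same first homology.

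For (2), combining (3) with the full faithfulness of $\ch_\et(k)^{op}\hookrightarrow\text{DM}_\et(k,\Z)$ turns the left-hand side into $\ho_{\text{DM}_\et(k,\Z)}(M_1(J(C)),M_1(J(C')))$, which is computed by the theory of Deligne $1$-motives inside étale motives following \cite{org}: the full embedding of $1$-motives into $\text{DM}_\et(k,\Z[1/p])$ gives $\ho(M_1(A),M_1(B))\simeq\ho_{AV}(A,B)[1/p]$ for abelian varieties $A,B$, with $p=\text{char}(k)$. The main obstacle, and the reason the factor $[1/p]$ appears in (2), is the behaviour at the residual characteristic: Lemma \ref{realization}(2) only describes $\rho_{\Z/\ell}$ for $\ell$ invertible, and the identification of $h^1_\et(C)$ with $M_1(J(C))$ — hence the $\text{Hom}$-computation — is transparent only after inverting $p$, so the prime $p$ has to be handled separately (or one works with $\Z[1/p]$-coefficients throughout, as in the references on $1$-motives). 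This is the step I expect to demand the most care.
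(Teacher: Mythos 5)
Your proposal is correct but takes a genuinely different route from the paper for parts (3) and (2), while part (1) is essentially the classical projector construction the paper also invokes via \cite[Theorem 2.7.2]{MNP}.

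For part (3), the paper follows \cite[Lemma 4.3.2]{AEWH}: it regards $M_\et(C)$ via its cohomology sheaves in the homotopy $t$-structure (after \cite[Theorem 3.4.2]{V00}), namely $\underline{\text{Pic}_{C/k}}[1/p]$ and $\underline{\mathbb{G}_m}[1/p]$, strips off $\mathbf{1}$ and $\mathbf{1}(1)[2]$ using $\Z_C(1)_\et\sim\underline{\mathbb{G}_m}[1/p][-1]$, and identifies what remains with the kernel of $\underline{\text{Pic}_{C/k}}[1/p]\to\Z[1/p]$, i.e.\ with $M_1(J(C))$. You instead build an explicit comparison morphism $\psi\colon h^1_\et(C)\to M_1(J(C))$ via the Abel--Jacobi map and $\alpha_{J(C)}$, then invoke the conservative family $\{\rho_\Q,\rho_{\Z/\ell}\}$ of \cite[Proposition 5.4.12]{CD16} together with Lemma \ref{realization}(2). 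Your approach is more constructive and produces a concrete isomorphism; it does place the burden of proof on identifying $\rho_{\Z/\ell}(\psi)$ with the canonical map $H_1^\et(C,\Z/\ell)\to J(C)[\ell]$, which is a classical but nontrivial verification that the paper's sheaf-theoretic route avoids. The paper's route is more intrinsic but leans on the homotopy $t$-structure description of curve motives, which your argument bypasses.

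For part (2), the paper compares $\ho$-sets directly: it quotes the isomorphism $\ho_{\ch(k)_\Z}(h^1(C),h^1(C'))[1/p]\simeq \ho_{\ch_\et(k)}(h^1_\et(C),h^1_\et(C'))$ and then \cite[Theorem 2.7.2.(b)]{MNP}, which computes the classical integral $\ho$-set in terms of $\ho_{AV}$. You instead \emph{derive} (2) from your (3): after transporting to $\ho_{\text{DM}_\et(k,\Z)}(M_1(J(C)),M_1(J(C')))$ you invoke the full embedding of Deligne $1$-motives into $\text{DM}_\et(k,\Z[1/p])$ following \cite{org}. Both routes are valid; yours makes the logical dependence (2)$\Leftarrow$(3) explicit, while the paper treats (2) and (3) as independent consequences of existing comparison theorems. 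You correctly flag the residual characteristic $p$ as the delicate point: either route is transparent only after inverting $p$, which is exactly why the factor $[1/p]$ appears.
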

\begin{proof}
1. It is a classic result, for instance see \cite[Theorem 2.7.2]{MNP} and the fully-faithful functor of 1 motives to the étale cohomology with integral coefficients.

2. This is a consequence of the isomorphism $$\ho_{\ch(k)_\Z}\left(h^1(C),h^1(C')\right)[1/p]\simeq \ho_{\ch_\et(k)}\left(h^1_\et(C),h^1_\et(C')\right)$$ 
and \cite[Theorem 2.7.2.(b)]{MNP}.

3. The argument is the same as in \cite[Lemma 4.3.2]{AEWH}. Consider the 1-motive $M_\et(C)$ which is cohomologically concentrated in degrees 0 and -1 as is given in \cite[Theorem 3.4.2]{V00}. The cohomology in degree 0 is $\underline{\text{Pic}_{C/k}}[1/p]$ while in degree 0 is equal to $\underline{\mathbb{G}_m}[1/p]$. Since $\Z_C(1)_\et \sim \underline{\mathbb{G}_m}[1/p]_\et[-1]$ we obtain that $\mathbf{1}\oplus \mathbf{1}(1)[2]\simeq \Z[1/p] \oplus \underline{\mathbb{G}_m}[1/p][1]$. The remaining object is given by the kernel of the map $\underline{\text{Pic}_{C/k}}[1/p] \to \Z[1/p]$, which is isomorphic to $M_1(J(C)).$
\end{proof}

\begin{theorem}\label{prodJac}
Let $k$ be a field of finite cohomological dimension and consider $C_i/k$ a  projective smooth curves with a $k$-rational point for $i \in \{1,\ldots,n\}$. Then the variety $J(C_1)\times \ldots \times J(C_n)$ admits an integral Chow-Künneth decomposition. 
\end{theorem}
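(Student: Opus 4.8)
The plan is to reduce the statement to the criterion recorded just above, namely that $h_\et(A)$ admits an integral Chow--K\"unneth decomposition in $\ch_\et(k)$ as soon as the $1$-motive $M_1(A)$ is an object of $\ch_\et(k)$, where I write $A:=J(C_1)\times\cdots\times J(C_n)$. First I would record the harmless facts: $A$ is an abelian variety, say of dimension $g=\sum_{i=1}^{n} g(C_i)$, so $\mathrm{kd}(A)=2g$; and $\spc(k)$ is good enough in the sense of Definition \ref{gooden}, since it has Krull dimension $0$ and $\mathrm{pcd}_p(\spc(k))=\mathrm{cd}_p(k)\le\mathrm{cd}(k)<\infty$ for every prime $p$. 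Hence Theorem \ref{relat} applies with $S=\spc(k)$ and $G=A$, so that the morphism
\begin{align*}
\phi_A\colon M_\et(A)\longrightarrow\bigoplus_{i=0}^{2g}\bigl(M_1(A)^{\otimes i}\bigr)^{h\mathfrak{S}_i}
\end{align*}
is already an isomorphism in $\text{DM}_\et(k,\Z)$; the whole point is therefore to upgrade it to $\ch_\et(k)$.

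Next I would establish that $M_1(A)$ lies in $\ch_\et(k)$ and unwind what this gives. By Lemma \ref{variab}(3), for each $i$ the motive $M_1(J(C_i))$ is isomorphic in $\ch_\et(k)$ to $h^1_\et(C_i)$, the direct summand of $h_\et(C_i)$ cut out by the middle Chow--K\"unneth projector of Lemma \ref{variab}(1) (this is where the $k$-rational point of $C_i$ is used). Since $M_1$ is $\Z$-additive for products of commutative group schemes, as already used in the proof of Lemma \ref{lemmacoeffin}, one gets $M_1(A)\simeq\bigoplus_{i=1}^{n} M_1(J(C_i))\simeq\bigoplus_{i=1}^{n} h^1_\et(C_i)$, a finite direct sum of direct summands of the $h_\et(C_i)$; being such, it lies in the pseudo-abelian category $\ch_\et(k)$, and I fix a presentation $M_1(A)=(Y,p)$ with $Y$ smooth projective. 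Then $M_1(A)^{\otimes i}=(Y^i,p^{\otimes i})\in\ch_\et(k)$, and by pseudo-abelianity $\bigl(M_1(A)^{\otimes i}\bigr)^{h\mathfrak{S}_i}=(Y^i,(p^{\otimes i})^{h\mathfrak{S}_i})\in\ch_\et(k)$, exactly via the construction $\bigl(h^{\otimes i}\bigr)^{h\mathfrak{S}_i}:=(Y^i,(p^{\otimes i})^{h\mathfrak{S}_i})$ employed in the Proposition above; hence the target $M:=\bigoplus_{i=0}^{2g}\bigl(M_1(A)^{\otimes i}\bigr)^{h\mathfrak{S}_i}$ of $\phi_A$ is an object of $\ch_\et(k)$, which I write $M=(Y',p')$ with $Y':=\coprod_{i=0}^{2g} Y^i$ smooth projective and $p'$ the block of orthogonal idempotents $(p^{\otimes i})^{h\mathfrak{S}_i}$. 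Because $M_\et(A)$ and $M$ both lie in the essential image of the fully faithful embedding $\ch_\et(k)^{op}\hookrightarrow\text{DM}_\et(k,\Z)$, the isomorphism $\phi_A$ is induced by a morphism $g\colon h_\et(A)\to M$ in $\ch_\et(k)$.

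It remains to verify the two conditions of the definition of an integral Chow--K\"unneth decomposition. For the rational part, $h(A)$ carries its classical rational Chow--K\"unneth decomposition, and applying $\rho_\Q$ to $g$ recovers it: by Lemma \ref{rational} one has $\rho_\Q(M)\simeq\bigoplus_{i=0}^{2g}\text{Sym}^i M_1(A)_\Q$, and by the first step of the proof of Theorem \ref{teoprin} the map $\rho_\Q(\phi_A)$ is the isomorphism $M(A)\xrightarrow{\sim}\bigoplus_{i}\text{Sym}^i M_1(A)_\Q$ of \cite{AHP}, a rational Chow--K\"unneth decomposition of $A$. For the torsion part, fix a prime $\ell\ne\mathrm{char}(k)$ and base change to $\bar k$; reusing verbatim the computation in the second step of the proof of Theorem \ref{teoprin}, Lemma \ref{lemmacoeffin} gives
\begin{align*}
\rho_{\Z/\ell}\bigl(M_\et(A_{\bar k})\bigr)\simeq\bigoplus_{i=0}^{2g}\left(\bigwedge^i\mathcal{H}_1(A,\Z/\ell)\right)[i],
\end{align*}
and under Poincar\'e--Verdier duality this is identified, compatibly with $g$, with $H^*_\et(A_{\bar k},\Z/\ell)\simeq\bigwedge^* H^1_\et(A_{\bar k},\Z/\ell)$ as in \cite[Lemma 4.1]{BSz}. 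Consequently the mod-$\ell$ realization $\rho_{\Z/\ell}(\bar g)$ is an isomorphism, $\rho_{\Z/\ell}(\bar p')$ equals the sum $p_0+\cdots+p_{2g}$ of the projectors $p_i$ onto the summands $\left(\bigwedge^i\mathcal{H}_1(A,\Z/\ell)\right)[i]$ (mutually orthogonal since the decomposition is a direct sum), and $\rho_{\Z/\ell}(\bar g)^{-1}\circ p_i$ is $R^i\bar f_*(\Z/\ell)$, which is exactly the required compatibility.

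The step I expect to be the main obstacle is not conceptual but is this last verification: one must match the abstract homotopy-fixed-point summands $\bigl(M_1(A)^{\otimes i}\bigr)^{h\mathfrak{S}_i}$, after rational and $\ell$-adic realization, with the cohomological pieces $h^i(A)$ and $R^i\bar f_*(\Z/\ell)$ in exactly the form the definition demands, keeping straight the homological-versus-cohomological and covariant-versus-contravariant conventions relating $M_\et(A)$, $h_\et(A)$ and the realization functors, as well as the Tate twists appearing in the duality isomorphisms. Everything else is a formal consequence of Theorem \ref{relat}, Lemma \ref{variab}, the pseudo-abelianity of $\ch_\et(k)$ and the full faithfulness of $\ch_\et(k)^{op}\hookrightarrow\text{DM}_\et(k,\Z)$.
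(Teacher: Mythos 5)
Your proof is correct and takes essentially the same approach as the paper: express $M_1(J(C_1)\times\cdots\times J(C_n))\simeq\bigoplus_i h^1_\et(C_i)=(\coprod_i C_i,\sum_i p^1_\et(C_i))\in\ch_\et(k)$ via Lemma \ref{variab}(3) and additivity of $M_1$, then invoke the criterion recorded just above the theorem. The paper stops there, leaving the verification of the definition implicit, whereas you spell out that verification; the mathematical content is the same.
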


\begin{proof}
 By point 3. of Lemma \ref{variab}, one has an isomorphism $h_\et^1(C_i)\simeq M_1(J(C_i))$. Since $M_1$ is an additive functor, we obtain $M_1(J(C_1)\times  \ldots \times J(C_n))\simeq \bigoplus_{i=1}^n M_1(J(C_i))$, thus $M_1(J(C_1)\times  \ldots \times J(C_n)) \simeq\bigoplus_{i=1}^n h_\et^1(C_i)$ in $\ch_\et(k)$. Therefore $M_1(J(C_1)\times  \ldots \times J(C_n))$ is isomorphic to the motive $h=\left(\coprod_i C_i,\sum_{i=1}^n p^1_\et(C_i)\right)\in \ch_\et(k)$
\end{proof}
    

Recall that thanks to \cite{Kun} we have Chow-Künneth decomposition with rational coefficients for abelian varieties and that by \cite[Proposition 4.3.3]{AEWH} the $h_1(A)$ part is isomorphic to $M_1(A)_\Q$ in $\ch(k)_\Q$. Given the part $h_1(A)$ of the motive $h(A)$ and its associated projector $p_1(A)$, then we can characterize the existence of an integral étale Chow-Künneth decomposition in the following way:

\begin{theorem}\label{teoProjAb}
 Let a field $k=\bar{k}$ and consider $A$ an abelian variety of dimension $g$ over $k$. Then the following statements are equivalents
 \begin{enumerate}
     \item $h_\et(A)$ admits an integral étale Chow-Künneth decomposition.
     \item the projector associated to $h_1(A) \in \ch(k)_\Q$ can be lifted to a projector in $\CH^g_\et(A\times A)$.
 \end{enumerate}    
\end{theorem}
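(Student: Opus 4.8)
The plan is to reduce to the criterion isolated just before the statement: $h_\et(A)$ admits an integral étale Chow--Künneth decomposition if and only if $M_1(A)$ lies in the essential image of the full embedding $\ch_\et(k)^{\text{op}}\hookrightarrow\text{DM}_\et(k,\Z)$. One implication is the sentence following Theorem \ref{teoprin} obtained by changing coefficients, together with the Proposition preceding this statement; the other is contained in $(1)\Rightarrow(2)$ below. It then suffices to prove that $M_1(A)\in\ch_\et(k)$ if and only if $p_1(A)$ admits a lift to an idempotent $\pi_1\in\CH^g_\et(A\times A)=\text{End}_{\ch_\et(k)}(h_\et(A))$. The inputs are Theorem \ref{teoprin}, giving $M_\et(A)\simeq\text{wSym}(M_1(A))$ with $M_1(A)$ the canonical $i=1$ summand; the realizations $\rho_{\Z/\ell}(M_\et(A))\simeq\bigoplus_{i=0}^{2g}(\bigwedge^i\mathcal{H}_1(A,\Z/\ell))[i]$ and $\rho_{\Z/\ell}(M_1(A))\simeq\mathcal{H}_1(A,\Z/\ell)[1]$ of Lemmas \ref{lemmacoeffin} and \ref{realization}, with $\dim_{\mathbb{F}_\ell}\mathcal{H}_1(A,\Z/\ell)=2g$; the $\ell$-adic realization $\rho_\ell\colon\text{DM}_\et(k,\Z)\to D(k_\et,\Z_\ell)$; and the conservativity of the family $\{\rho_\Q,\rho_{\Z/\ell}\}_{\ell\neq\text{char}(k)}$ from \cite[Proposition 5.4.12]{CD16}.

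For $(1)\Rightarrow(2)$: an integral étale Chow--Künneth decomposition provides a Chow motive $h^1_\et(A)\in\ch_\et(k)$, a direct summand of $h_\et(A)$, with $\rho_\Q(h^1_\et(A))\simeq h_1(A)\simeq M_1(A)_\Q$ by \cite[Proposition 4.3.3]{AEWH} and, by the realization conditions in the definition, $\rho_{\Z/\ell}(h^1_\et(A))\simeq\mathcal{H}_1(A,\Z/\ell)[1]\simeq\rho_{\Z/\ell}(M_1(A))$ for each $\ell\neq\text{char}(k)$. Conservativity then yields $h^1_\et(A)\simeq M_1(A)$ in $\text{DM}_\et(k,\Z)$, so $M_1(A)\in\ch_\et(k)$; and the idempotent $\pi_1\in\text{End}_{\ch_\et(k)}(h_\et(A))=\CH^g_\et(A\times A)$ cutting out this summand has $\rho_\Q(\pi_1)$ idempotent with image $h_1(A)$ and $[m]_*\circ\rho_\Q(\pi_1)=m\cdot\rho_\Q(\pi_1)$ for all $m$ (the eigenvalue relation from the remark after Theorem \ref{relat}, since $\pi_0(A)$ is trivial), which identifies $\rho_\Q(\pi_1)$ with the Künnemann projector $p_1(A)$. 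Thus $\pi_1$ is the desired lift.

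For $(2)\Rightarrow(1)$, take an idempotent $\pi_1\in\CH^g_\et(A\times A)$ with $\rho_\Q(\pi_1)=p_1(A)$ and set $h:=(A,\pi_1,0)$, $h':=(A,\Delta-\pi_1,0)$, so $h_\et(A)\simeq h\oplus h'$ in $\ch_\et(k)$. I would show that $\psi\colon h\hookrightarrow h_\et(A)\xrightarrow{\alpha_A}M_1(A)$ is an isomorphism in $\text{DM}_\et(k,\Z)$ using the conservative family. After $\rho_\Q$, $\psi$ is the $h_1$-component of $\alpha_A$, an isomorphism by \cite[Proposition 4.3.3]{AEWH}. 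For $\rho_{\Z/\ell}$: the complexes $\rho_\ell(h)$ and $\rho_\ell(h')$ are perfect over $\Z_\ell$, complementary summands of $\rho_\ell(h_\et(A))$, whose free cohomological rank in each degree $j$ equals $\dim_\Q\rho_\Q(h)^j$ resp.\ $\dim_\Q\rho_\Q(h')^j$; by universal coefficients $\dim_{\mathbb{F}_\ell}\rho_{\Z/\ell}(h)^j$ is at least this rank, and likewise for $h'$. Using that $\rho_{\Z/\ell}(h)$ and $\rho_{\Z/\ell}(h')$ are complementary summands of $\rho_{\Z/\ell}(h_\et(A))$, so that $\dim_{\mathbb{F}_\ell}\rho_{\Z/\ell}(h)^j+\dim_{\mathbb{F}_\ell}\rho_{\Z/\ell}(h')^j=\binom{2g}{j}=\dim_\Q\rho_\Q(h)^j+\dim_\Q\rho_\Q(h')^j$, equality is forced in every degree. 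Hence $\rho_{\Z/\ell}(h)$ sits in the single degree occupied by $h_1(A)$, with dimension $2g$, i.e.\ it is the whole factor $\bigwedge^1\mathcal{H}_1(A,\Z/\ell)[1]$ of $\rho_{\Z/\ell}(M_\et(A))$, and $\rho_{\Z/\ell}(\psi)$ is the restriction to it of the canonical projection $\rho_{\Z/\ell}(\alpha_A)$, hence the identity. So $\psi$ is an isomorphism, $M_1(A)\simeq h\in\ch_\et(k)$, and by the reduction $h_\et(A)$ admits an integral étale Chow--Künneth decomposition.

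The step I expect to require the most care is, in $(1)\Rightarrow(2)$, passing from the abstract decomposition of $h_\et(A)$ to an honest idempotent correspondence in $\CH^g_\et(A\times A)$ with the prescribed rationalization: this rests on the full faithfulness of $\ch_\et(k)^{\text{op}}\hookrightarrow\text{DM}_\et(k,\Z)$ (so the projection onto the $h^1$-summand is represented by a cycle) and on the uniqueness of the Künnemann projector via its $[m]_*$-eigenvalue property; one must also check that $\rho_{\Z/\ell}(\alpha_A)$ is genuinely the projection of $\bigoplus_i(\bigwedge^i\mathcal{H}_1(A,\Z/\ell))[i]$ onto the $i=1$ factor, which follows from the construction of $\alpha_A$ and the uniqueness statement \cite[Theorem 2.8]{AHP} used in the proof of Theorem \ref{teoprin}. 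With the rank comparison in $(2)\Rightarrow(1)$ all primes $\ell$ are treated uniformly, so no special argument is needed at the primes dividing the denominators of the Künnemann projectors.
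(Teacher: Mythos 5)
Your overall plan coincides with the paper's: reduce, via the remark immediately preceding the statement, to showing that $M_1(A)$ lies in $\ch_\et(k)$, and test the candidate identification $(A,\pi_1)\simeq M_1(A)$ against the conservative family $\{\rho_\Q,\rho_{\Z/\ell}\}$ from \cite[Proposition 5.4.12]{CD16}. Where you diverge is in the $\ell$-adic step of $(2)\Rightarrow(1)$. The paper argues directly on the level of $\Z_\ell$-coefficients: since $H_1(A_\et,\Z_\ell)=T_\ell(A)\simeq\Z_\ell^{2g}$ is torsion-free and injects into $H_1(A_\et,\Q_\ell)$, the integral projector is forced to act as the identity on $H_1(A_\et,\Z_\ell)$ because its rationalization $p_1(A)$ does, and then reduction modulo $\ell^n$ yields the corresponding statement with finite coefficients. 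Your argument is a global dimension count instead: the $\mathbb{F}_\ell$-dimensions of $\rho_{\Z/\ell}(h)^j$ and $\rho_{\Z/\ell}(h')^j$ are bounded below by the free $\Z_\ell$-ranks of $\rho_\ell(h)$ and $\rho_\ell(h')$ via universal coefficients, and since the degreewise sum $\binom{2g}{j}$ already coincides with the sum of the rational dimensions, equality is forced and no torsion appears. Both mechanisms ultimately rest on the torsion-freeness of $H^*(A_\et,\Z_\ell)$; yours treats all cohomological degrees at once, whereas the paper's is shorter and only spells out the argument on $H_1$, leaving the remaining degrees (where $p$ must act as zero) to analogous reasoning. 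For $(1)\Rightarrow(2)$ the paper simply says "immediate"; your expansion is sound in spirit and correctly flags the real issue, namely identifying $\rho_\Q(\pi_1)$ with Künnemann's $p_1(A)$ rather than with some other rational Chow--Künneth idempotent having the same image, via the $[m]_*$-eigenvalue characterization. One phrasing to tighten: conservativity detects isomorphisms of \emph{morphisms}, so before concluding $h^1_\et(A)\simeq M_1(A)$ you should first produce the comparison map (restricting $\alpha_A$ to the summand does the job), rather than reasoning from agreement of realizations as objects; you do essentially this in the $(2)\Rightarrow(1)$ direction and should mirror it here.
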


\begin{proof}
1. $(\Longrightarrow)$ 2. is immediate. If we assume 2. then there exists an element $p \in \CH^g_\et(A \times A)$ such that $h_1(A)=(A,p_\Q) \in \ch(k)_\Q$ where $p_\Q$ is the image of $p$ in $\CH^g_\et(A\times A)_\Q$. Then the realization of the motive $h=(A,p)$ coincides with $h_1(A)$ if we change to rational coefficients. If $\ell \neq \text{char}(k)$, then $ H_{1}(A_\et,\Z_\ell) = T_\ell(A)$ is $\Z_\ell$-torsion free since $A[\ell^n]\simeq (\Z/\ell^n)^{2g}$, so we have an injection  $H_{1}(A_\et,\Z_\ell) \hookrightarrow T_\ell(A) \otimes \Q_\ell = H_{1}(A_\et,\Q_\ell)$, therefore $p_\Q$ acts as the identity on $H_{1}(A_\et,\Z_\ell)$. 

Consider the realization $p_{\ell^n} \in \CH^g_\et(A\times A,\Z/\ell^ n)$, where the last group is isomorphic to $H^{2g}_\et(A\times A,\Z/\ell^n)$. As $\Q_\ell$ is a flat $\Z_\ell$-module and $\varprojlim_{n \in \N}$ is a right exact functor we have that $p_{\ell^n}$ acts as the identity over $H_1(A_\et,\Z/\ell^n)$.
\end{proof}

\begin{theorem}
 Let $k=\bar{k}$ be a field and let $A$ be an abelian variety. Then there exists a Chow-Künneth decomposition of $A$.
\end{theorem}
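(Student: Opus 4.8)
The plan is to derive the decomposition directly from Theorem~\ref{teoprin} together with the pseudo-abelianity of $\ch_\et(k)$, so that no new input is needed beyond what is already proved. Since $k=\bar k$ we have $\text{cd}(k)=0$, so $A$ is a connected commutative group scheme over a field of finite cohomological dimension and, with $g=\dim A$ and $\text{kd}(A)=2g$, Theorem~\ref{teoprin} provides an isomorphism
\[
\phi_A\colon\ M_\et(A)\ \xrightarrow{\ \simeq\ }\ \bigoplus_{i=0}^{2g}\bigl(M_1(A)^{\otimes i}\bigr)^{h\mathfrak{S}_i}
\]
in $\text{DM}_\et(k,\Z)$. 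First I would use $\phi_A$ to manufacture, for each $i$, the idempotent $\pi_i^\et\in\text{End}_{\text{DM}_\et(k,\Z)}(M_\et(A))$ that cuts out the $i$-th summand (namely $\phi_A^{-1}$ composed with the inclusion--projection pair of that summand composed with $\phi_A$); by construction $\pi_i^\et\circ\pi_j^\et=\delta_{ij}\,\pi_i^\et$ and $\sum_i\pi_i^\et=\text{id}$. The key observation is that full faithfulness of $\ch_\et(k)^{op}\hookrightarrow\text{DM}_\et(k,\Z)$ identifies $\text{End}_{\text{DM}_\et(k,\Z)}(M_\et(A))$ with $\text{End}_{\ch_\et(k)}(h_\et(A))=\CH^g_\et(A\times A)$, so each $\pi_i^\et$ is an honest idempotent correspondence; since $\ch_\et(k)$ is pseudo-abelian this yields a decomposition $h_\et(A)=\bigoplus_{i=0}^{2g}h_\et^i(A)$ with $h_\et^i(A):=(A,\pi_i^\et,0)$ and $h_\et^i(A)\simeq\bigl(M_1(A)^{\otimes i}\bigr)^{h\mathfrak{S}_i}$ in $\text{DM}_\et(k,\Z)$. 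In particular $M_1(A)=h^1_\et(A)\in\ch_\et(k)$ and $\pi_1^\et$ is an integral lift of the projector $p_1(A)$, which is exactly the hypothesis discussed in the remark after Theorem~\ref{teoprin} and condition~(2) of Theorem~\ref{teoProjAb}.

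Next I would check that this family of projectors constitutes a Chow-Künneth decomposition in the sense of the definition of Section~3. For the rational statement, Lemma~\ref{rational} gives $\rho_\Q\bigl((M_1(A)^{\otimes i})^{h\mathfrak{S}_i}\bigr)\simeq\text{Sym}^i M_1(A)_\Q$, and the proof of Theorem~\ref{teoprin} identifies $\rho_\Q(\phi_A)$ with the isomorphism $\varphi_A$ of \cite[Theorem~3.3]{AHP}; together with \cite{Kun} and \cite[Proposition~4.3.3]{AEWH} this shows $\rho_\Q(\pi_i^\et)=p_i(A)$, a rational Chow-Künneth decomposition of $A$, and in particular $h^1_\et(A)_\Q\simeq h_1(A)$. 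For the torsion statement, fix a prime $\ell\neq\text{char}(k)$; by Lemma~\ref{lemmacoeffin} one has $\rho_{\Z/\ell}(M_\et(A))\simeq\bigoplus_{i=0}^{2g}\bigl(\bigwedge^i\mathcal{H}_1(A,\Z/\ell)\bigr)[i]$, whose Poincaré-Verdier dual is $\bigoplus_i\bigl(\bigwedge^i\mathcal{H}^1(A,\Z/\ell)\bigr)[i]\simeq H^\bullet_\et(A,\Z/\ell)$, and under this identification $\rho_{\Z/\ell}(\pi_i^\et)$ is the projector onto $H^i_\et(A,\Z/\ell)$; the orthogonality and weight conditions of the definition follow at once. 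Hence $h_\et(A)$ admits an integral étale Chow-Künneth decomposition.

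I expect no genuine obstacle here: the substantive work is already Theorem~\ref{teoprin}, i.e.\ that the splitting of $M_\et(A)$ is valid integrally in $\text{DM}_\et(k,\Z)$ and not merely after $\otimes\,\Q$. The two points that still need a little care are (i) that each summand $\bigl(M_1(A)^{\otimes i}\bigr)^{h\mathfrak{S}_i}$ really is an object of $\ch_\et(k)$, i.e.\ that the homotopy-fixed-point idempotent $\bigl((\pi_1^\et)^{\otimes i}\bigr)^{h\mathfrak{S}_i}$ on $h_\et(A^i)$ lies in $\CH^{gi}_\et(A^i\times A^i)$ — this is precisely the construction recalled in the proposition preceding Theorem~\ref{teoProjAb}, applied with $(X,p)=(A,\pi_1^\et)$ — and (ii) matching $\rho_\Q(\pi_i^\et)$ with the normalization of $p_i(A)$ used in the definition, which is a bookkeeping comparison via the uniqueness clause of \cite[Theorem~2.8]{AHP} already invoked in the proof of Theorem~\ref{teoprin}.
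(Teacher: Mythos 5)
Your proof takes essentially the same approach as the paper's: both hinge on the full faithfulness of $\ch_\et(k)^{op}\hookrightarrow\text{DM}_\et(k,\Z)$ to transfer the idempotent cutting out $M_1(A)$ (coming from the integral splitting $\phi_A$ of Theorem~\ref{teoprin}) into $\CH^g_\et(A\times A)$, then invoke pseudo-abelianity and the remark preceding Theorem~\ref{teoProjAb} to conclude. You are more explicit about pulling back all the projectors $\pi_i^\et$ and checking the rational and mod-$\ell$ conditions of the definition, but the logical skeleton is identical.
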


\begin{proof}
Consider an abelian variety $A/k$, then we have that
\begin{align*}
    \text{End}_{\ch_\et(k)}(h_\et(A)) \simeq  \text{End}_{\text{DM}_\et(k)}(M_\et(A)). 
\end{align*}
Since $M_1(A)$ is a direct factor of $M_\et(A)$, then it defines an endomorphism $p$ of $M_\et(A)$, as the endomorphism of the motive $h_\et(A)$ is defined as $\CH^g_\et(A\times A)$ where $g=\text{dim}(A)$. Since $p \in \CH^g_\et(A\times A)$ such that $p^2=p$, thus we define the motive $h_1(A):=(A,p,0)$. The functor $\ch_\et(k)^{op}\hookrightarrow \text{DM}_\et(k)$, sends $h_1(A)\mapsto M_1(A)$, therefore, as $M_1(A)\in \ch_\et(k)$ we conclude that $h_\et(A)$ admits a Chow-Künneth decomposition.
\end{proof}

\begin{theorem}
 Let $X$ be a smooth projective variety of dimension $d$ over an algebraically closed field $k$. If $\text{Pic}^0(X)$ is a principally polarized variety, then there exists a decomposition of the motive $h_\et(X)$ as
 \begin{align*}
     h_\et(X)=h_\et^0(X)\oplus h^1_\et(X) \oplus h^+_\et(X) \oplus  h^{2d-1}_\et(X) \oplus h_\et^{2d}(X) 
 \end{align*}
 \end{theorem}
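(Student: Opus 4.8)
The plan is to exhibit a system of five mutually orthogonal idempotents $p_0,p_1,p_{2d-1},p_{2d},p_+$ in the ring $\CH^d_\et(X\times X)\cong\text{End}_{\ch_\et(k)}(h_\et(X))$ summing to the diagonal $\Delta_X$, and to put $h^i_\et(X):=(X,p_i,0)$; the decomposition then follows formally from pseudo-abelianity of $\ch_\et(k)$. Since $k=\bar k$ we may fix a rational point $x_0\in X(k)$. The cycles $p_0:=[\{x_0\}\times X]$ and $p_{2d}:=[X\times\{x_0\}]$ are honest algebraic cycles of codimension $d$, hence elements of $\CH^d_\et(X\times X)$; they are orthogonal idempotents and give $h^0_\et(X)\cong\mathbf 1$ and $h^{2d}_\et(X)\cong\mathbb L^d$. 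We shall take $p_{2d-1}:={}^tp_1$ and $p_+:=\Delta_X-p_0-p_1-p_{2d-1}-p_{2d}$, so the whole problem reduces to producing the right $p_1$.

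To construct $p_1$, write $A:=\text{Pic}^0(X)$, $g:=\dim A$, and $\text{Alb}_X:=A^\vee$; by hypothesis $A$, hence also $A^\vee$, is principally polarized, and we fix $\theta\colon A\xrightarrow{\ \sim\ }A^\vee$. By the preceding theorem (every abelian variety over $k=\bar k$ carries an integral étale Chow--Künneth decomposition, because $M_1$ of an abelian variety lies in $\ch_\et(k)$ by pseudo-abelianity), $\text{Alb}_X$ has a degree-$1$ projector $p_1^{\text{Alb}}\in\CH^g_\et(\text{Alb}_X\times\text{Alb}_X)$ with $(\text{Alb}_X,p_1^{\text{Alb}},0)\simeq M_1(\text{Alb}_X)$. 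The Albanese morphism $u\colon X\to\text{Alb}_X$ normalized by $x_0$ yields a morphism $\alpha:=p_1^{\text{Alb}}\circ u_*\colon h_\et(X)\to M_1(\text{Alb}_X)$ in $\ch_\et(k)$. The key step is to produce a section $\beta\colon M_1(\text{Alb}_X)\to h_\et(X)$ with $\alpha\circ\beta=\mathrm{id}$: rationally such a $\beta$ is furnished by Poincaré complete reducibility applied to a curve $C\hookrightarrow X$ with $\text{Alb}(C)\twoheadrightarrow\text{Alb}_X$ surjective (equivalently, by combining the Albanese with the Poincaré bundle on $X\times A$ and $\theta$), but only after dividing by the degree of a polarization; it is exactly the hypothesis that $\text{Pic}^0(X)$ be principally polarized that trivializes this degree and allows $\beta$ to be chosen in $\ch_\et(k)$. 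Alternatively, one may show directly that $\alpha$ is surjective in the sense of Definition \ref{defsurj}, using Lemma \ref{surj} to reduce to a surjectivity statement on étale Chow groups over a universal domain, and then invoke Lemma \ref{kim6.8} to produce the right inverse $\beta$. Put $p_1:=(\Delta_X-p_0-p_{2d})\circ(\beta\circ\alpha)\circ(\Delta_X-p_0-p_{2d})\in\CH^d_\et(X\times X)$.

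It remains to verify the relations $p_1^2=p_1$ and $p_1p_0=p_0p_1=p_1p_{2d}=p_{2d}p_1=0$ in $\CH^d_\et(X\times X)$. By the conservativity of the family $\{\rho_\Q\}\cup\{\rho_{\Z/\ell}\mid\ell\neq\text{char}(k)\}$ on $\text{DM}_\et(k,\Z)$ (\cite[Proposition 5.4.12]{CD16}), or equivalently by the improved Manin principle of Theorem \ref{immanin}, it suffices to check these identities after applying $\rho_\Q$ and each $\rho_{\Z/\ell}$. Rationally they reduce to the standard relations for Murre's Chow--Künneth projectors $\pi_0,\pi_1,\pi_{2d-1},\pi_{2d}$, which exist for any smooth projective variety (\cite{MNP}) and with which the $\rho_\Q(p_i)$ coincide by construction of $\alpha$ and $\beta$. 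Modulo $\ell$, Lemma \ref{realization} shows $\rho_{\Z/\ell}$ commutes with the additive operations used to build $p_1$, so $\rho_{\Z/\ell}(p_1)$ is the Künneth projector of $H^*_\et(X,\Z/\ell)$ onto $H^1_\et(X,\Z/\ell)$; this is a genuine idempotent orthogonal to the $H^0$- and $H^{2d}$-projectors, because $H^1_\et(X,\Z/\ell)\cong H^1_\et(\text{Pic}^0(X),\Z/\ell)$ and, by \cite[Lemma 4.1]{BSz} (see Lemma \ref{lemmacoeffin}), $H^*_\et(\text{Pic}^0(X),\Z/\ell)\cong\bigwedge^{*}H^1_\et(\text{Pic}^0(X),\Z/\ell)$, so its degree-$1$ component is well defined. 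Hence the relations hold integrally; $p_+:=\Delta_X-p_0-p_1-{}^tp_1-p_{2d}$ completes a system of five mutually orthogonal idempotents summing to $\Delta_X$, and $h_\et(X)=h^0_\et(X)\oplus h^1_\et(X)\oplus h^+_\et(X)\oplus h^{2d-1}_\et(X)\oplus h^{2d}_\et(X)$ in $\ch_\et(k)$, with $h^1_\et(X)\simeq M_1(\text{Pic}^0(X))$ and $h^{2d-1}_\et(X)=(X,{}^tp_1,0)$.

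The main obstacle is the construction of the section $\beta$ in $\ch_\et(k)$: rationally one only has a section up to an isogeny, and the essential role of the hypothesis that $\text{Pic}^0(X)$ be principally polarized is precisely to clear the resulting denominator, i.e.\ to remove the isogeny defect of the Poincaré-reducibility splitting. A secondary technical point is that a correspondence which is idempotent rationally and modulo every prime need not be idempotent in $\CH^d_\et(X\times X)$ on the nose; promoting such ``pointwise'' idempotency and orthogonality to genuine relations is where the conservativity results of Section 2 and the improved Manin principle of Section 4 do the real work.
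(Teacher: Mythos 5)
Your overall strategy matches the paper's: everything reduces to producing the degree-one projector $p_1$ and its transpose integrally, and the hypothesis that $\text{Pic}^0(X)$ is principally polarized is used to clear the isogeny denominator in Murre's rational construction of the Picard/Albanese projectors. But your proposal leaves a genuine gap precisely at the step you yourself flag as ``the main obstacle'': you never construct the section $\beta$ (equivalently the inverse correspondence $\lambda^{-1}$) as an element of $\CH^d_\et(X\times X)$. Asserting that the principal polarization ``trivializes the degree'' is the right slogan, not a proof; and the alternative route you sketch --- show $\alpha_\Omega$ surjective on integral étale Chow groups over a universal domain and invoke Lemmas \ref{surj} and \ref{kim6.8} --- merely relocates the difficulty, since that surjectivity is exactly as hard as exhibiting $\beta$ and you give no argument for it. There is also a small formal issue in the final assembly: conjugating an idempotent $q=\beta\circ\alpha$ by $e=\Delta_X-p_0-p_{2d}$ produces an idempotent only if $qeq=q$, so the orthogonality of $q$ with $p_0,p_{2d}$ needs to be established before, not derived from, the sandwich.

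The paper fills the gap with a concrete cycle. It realizes the polarization $\lambda\colon\text{Alb}_X(k)\to\text{Pic}^0(X)$ as a divisor on $X\times X$ via the identification $\ho_{AV}(\text{Alb}_X(k),\text{Pic}^0(X))\simeq\CH^1_\et(X\times X)/\CH^1_\equiv(X\times X)$, and it produces $\lambda^{-1}$ from the integral étale Fourier transform: the cycle $c_1(\mathcal{P}_{\text{Pic}^0(X)})^{2g-1}/(2g-1)!$ on $\text{Pic}^0(X)\times\text{Alb}_X(k)$, transported to $X\times X$ via the torsion isomorphisms $\text{Pic}^0(X)[\ell^n]\simeq H^1_\et(X,\Z/\ell^n)$ and $\text{Alb}_X(k)[\ell^n]\simeq H^{2d-1}_\et(X,\Z/\ell^n)$, yields an honest element of $\CH^d_\et(X\times X)$. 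Comparing $\lambda\circ\lambda^{-1}=\text{id}$ with the rational isogenies $\alpha,\beta$ of \cite[Lemma 6.2.3]{MNP} satisfying $\alpha\circ\beta=m\cdot\text{id}$, the integer $m$ becomes invertible in $\CH^d_\et(X\times X)$, and this is what produces $p^\et_1(X)$ and $p^\et_{2d-1}(X)$. So the hypothesis is used not abstractly (clearing denominators) but via a specific symmetric ample line bundle whose Poincaré-bundle power furnishes the inverse correspondence integrally. Your writeup identifies the right obstacle and the right lemmas, but it does not carry out the construction that makes the theorem true.
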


\begin{proof}
An abelian variety $A$ admits a principal polarization if and only $\widehat{A}$ admits one. So if $A = \text{Pic}^0(X)$ then the Picard variety is principally polarized if and only if $\text{Alb}_X(k)$ admits one. Let $k$ be an algebraically closed field and let $A$ be a principally polarized abelian variety $\lambda: A \xrightarrow{\sim} \widehat{A}$ induced by a symmetric ample line bundle. We have an injection
\begin{align*}
    \ho_{AV}(A,\widehat{A}) \hookrightarrow \ho_{\Z_\ell}(T_\ell(A),T_\ell(\widehat{A}))
\end{align*}
Therefore $\lambda$ induce an isomorphism of Tate modules. Since we have the following isomorphisms
\begin{align*}
    H^{2g-1}_\et(A,\Z_\ell)\simeq T_\ell(A) \xrightarrow{\sim} T_\ell(\widehat{A}) \simeq H^{2g-1}_\et(\widehat{A},\Z_\ell)
\end{align*}
considering the isomorphism $H^1_\et(A,\Z_\ell)\simeq H^{2g-1}_\et(\widehat{A},\Z_\ell)$. Since $\lambda$ is induced by a cycle (thanks to the integral étale Fourier transform). Now take an hyperplane $H$ in $X$ and intersect it with itself $g-1$ times, then it induces a Lefschetz operator
\begin{align*}
    L^{g-1}_A: H^1_\et(X,\Z_\ell) \to H^{2g-1}_\et(X,\Z_\ell)
\end{align*}
which turns out to be an injection. We will see that there exists and étale cycle in $\CH^g_\et(A\times A)$ whose multiple by an integer equals the Lefschetz operator. We recall that there exists isomorphisms
\begin{align*}
    \ho^0_{\text{SmProj}_k}(X,\text{Pic}^0(X)) &\simeq \ho_{AV}(\text{Alb}_X(k),\text{Pic}^0(X)) \\
   & \simeq \CH^1_\et(X\times X)/\CH^1_{\equiv}(X\times X)
\end{align*}
where  $\CH^1_{\equiv}(X\times X) = \text{pr}^*_1(\CH^1_\et(X))\oplus\text{pr}^*_2(\CH^1_\et(X))$ and $ \ho^0_{\text{SmProj}_k}$ stands for the pointed morphisms of smooth projective varieties over $k$. Thus the polarization $\lambda:\text{Alb}_X(k) \to \text{Pic}^0(X)$ is induced by a divisor in $X \times X$. Now consider the abelian variety $A=\text{Pic}^0(X)$, then there is a morphism $\lambda^{-1}:\text{Pic}^0(X) \to \text{Alb}_X(k)$, thanks to the existence of a Fourier transform with integral coefficients which is motivic. The cycle $c_1(\mathcal{P}_{\text{Pic}^0(X)})^{2g-1}/(2g-1)! \in \CH^{2g-1}_\et(\text{Pic}^0(X)\times \text{Alb}_X(k))$ where $g=\text{dim}(\text{Pic}^0(X))$ induce an isomorphism $H^1(X,\Z/\ell^n) \to H^{2d-1}(X,\Z/\ell^n)$, since we have isomorphisms $f:\text{Pic}^0(X)[\ell^n]\simeq H^1(X,\Z/\ell^n)$ and $g:\text{Alb}_X(k)[\ell^r]\simeq H^{2d-1}(X,\Z/\ell^n)$, see \cite[Chap. III, Cor. 4.18]{Mil80}, thus we associate the cycle 
\begin{align*}
\lambda^{-1}:= g \circ \frac{c_1(\mathcal{P}_{\text{Pic}^0(X)})^{2g-1}}{(2g-1)!} \circ f^{-1} \in \CH^{d}_\et(X\times X).
\end{align*}

By arguments given in \cite[Lemma 6.2.3]{MNP}, one has that $L_X^{g-1}$ defines an isogeny $\alpha : \text{Pic}^0(X)\to \text{Alb}_X(k)$ and another one $\beta: \text{Alb}_X(k) \to \text{Pic}^0(X)$ such that $\alpha \circ \beta = m \cdot \text{id}_{\text{Alb}_X(k)}$ and $\beta \circ \alpha = m \cdot \text{id}_{\text{Pic}^0(X)}$ for some $m \in \N$. If we take the isogeny $\lambda: \text{Alb}_X(k) \to \text{Pic}^0(X)$ and $\lambda^{-1}:\text{Pic}^0(X) \to \text{Alb}_X(k)$, with this, we have that $\lambda^{-1}\circ \lambda = \text{id}_{\text{Alb}_X(k)}$ and $\lambda\circ \lambda^{-1} = \text{id}_{\text{Pic}^0(X)}$, since  $\alpha \circ \beta $ is induced by an algebraic cycle and also $\lambda^{-1}\circ \lambda$ (but in this specific case it is induced by an étale cycle). Therefore $m$ is invertible in $\CH_\et^g(X\times X)$, thus we obtain the existence of the projector $p_1^\et(X)$ and $p_{2d-1}^\et(X)$

\end{proof}

We will end up by saying a few words about the Chow-Künneth decomposition of conic bundles following \cite{NS}. The main reference for Prym varieties and conic bundles that we considered is \cite{bea}.  Let us consider a field of characteristic different from 2. Let $\pi: \tilde{C}\to C$ be a double étale cover between smooth projective curve of genus $g(C)=g$ and $g(\tilde{C})=2g-1$, the norm morphism $\text{Nm}_\pi:J(\tilde{C})\to J(C)$ which takes an invertible sheaf $\mathcal{O}_{\tilde{C}}(\tilde{D})$ to the invertible sheaf  $\mathcal{O}_{C}(\pi_*\tilde{D})$. The kernel of the map has two connected component. We define the \textit{Prym variety} of the cover $\pi$ denoted by $P_{\tilde{C}}$ to be the connected component of $0_{J(\tilde{C})}$. The Prym variety is principally polarized induced by the canonical principal polarization on $J(\tilde{C})$. 

We assume that $k$ is an algebraically closed field. Consider a threefold $f:X \to S$, we say that $X$ is a conic bundle if the fibers of $f$ are conics. There exists a curve $C$ called the discriminant of $f$. This morphism is flat and if $\omega_X$ is the canonical sheaf on $X$, then $\xi= f_* \omega^{-1}_X$ is a locally free sheaf of rank 3 and $X$ is defined in $\mathbb{P}_S(\xi)$. Because of this we have isomorphisms
\begin{align*}
    H^0(X)\simeq H^0(\mathbb{P}_S(\xi)), \ H^1(X)\simeq H^1(\mathbb{P}_S(\xi)), \ H^2(X)\simeq H^2(\mathbb{P}_S(\xi))
\end{align*}
by the Lefschetz hyperplane theorem and also we can characterize the groups $H^i(X)$ for $i=4,5$ and 6. Here $H^i(X)=H^i_\et(X,\Z_\ell)$ or $H^i(X)=H^i_B(X,\Z)$ if the base field is $k=\C$. Concerning the middle cohomology group we have that for $\ell \neq \text{char}(k)$
\begin{align*}
    H^3_\et(X,\Z_\ell) &\simeq H^3_\et(S,\Z_\ell)\oplus H^1_\et(S,\Z_\ell)(-1) \oplus H^1_\et(P_{\widetilde{C}},\Z_\ell)(-1)\text{ or } \\
     H^3_B(X,\Z) &\simeq H^3_B(S,\Z)\oplus H^1_B(S,\Z)(-1) \oplus H^1_B(P_{\widetilde{C}},\Z)(-1) \ \text{ if }k=\C.
\end{align*}

For a double covering $\sigma:\tilde{C}\to C$ which has an associated involution $\iota: \tilde{C}\to \tilde{C}$ with the identification $C=\tilde{C}/(\iota)$, consider the integral 1-motive $M_1(P_{\tilde{C}}) \in \text{DM}_\et(k,\Z)$. Consider the isomorphisms $h_\et^1(C)\simeq M_1(J(C))$ and $h_\et^1(\tilde{C})\simeq M_1(J(\tilde{C}))$. Since we have an isomorphism
\begin{align*}
    P_{\tilde{C}}= \text{ker}(\text{Nm}_\pi)^0 = \text{im}(1-\iota)\hookrightarrow J(\tilde{C})
\end{align*}
and the isomorphism at the level of hom-set in different categories:
\begin{align*}
    \ho_{\ch_\et(k)}\left(h^1_\et(\tilde{C}),h^1_\et(\tilde{C})\right)\simeq \ho_{AV}\left(J(\tilde{C}),J(\tilde{C})\right)[1/p]
\end{align*}
and 
\begin{align*}
    \ho_{AV}\left(J(\tilde{C}),J(\tilde{C})\right)[1/p] \simeq \ho_{\text{DM}_\et(k,\Z)}\left(M_1(J(\tilde{C})),M_1(J(\tilde{C}))\right),
\end{align*}
it follows that the map $\text{id}-\iota$ defines a morphism of étale motives $\text{id}-\iota:h^1_\et(\widetilde{C})\to h^1_\et(\widetilde{C}) \in \text{DM}_\et(k,\Z)$ induced by the cycle induced by the graph. Thus we define the integral Prym motive as $\text{Pr}(\tilde{C}/C):=\text{im}(1-\iota) \in \text{DM}_\et(k,\Z)$. Let $M$ be the motive defined as
\begin{align*}
    M:=h_\et(S) \oplus h_\et(S)(-1) \oplus \text{Pr}(\tilde{C}/C)(-1).
\end{align*}

\begin{remark}
\begin{enumerate}
    \item We have to remark the next points: We can obtain a fully characterization of the étale Chow groups of $X/\C$ with $S$ a smooth surface. Since $J^2(X)\simeq \text{Alb}_S(S)\oplus \text{Pic}^0(S)\oplus P_{\widetilde{C}}$ by \cite[Theorem 3.5]{belt} and using the results of \cite{bea} and \cite{belt}, we obtain the following characterization
\begin{align*}
    \CH^0_\et(X)&\simeq \Z, \\ \CH_\et^1(X)&\simeq \CH^1(X), \\
    \CH_\et^2(X)&\simeq \CH^2_\et(S)\oplus \CH^1_\et(S)\oplus P_{\widetilde{C}},\\
     \CH^3_\et(X)&\simeq \CH^3(X).
\end{align*}
Since we have an isomorphism $\CH^*(X)\simeq \CH_\et^*(X)$, by \cite[Theorem 1.1]{RS} we can conclude that the classical integral Hodge conjecture holds for smooth conic bundles  $X \to S$.

\item Now consider $X \to \Pro_{\C}^2$ a quadric bundle of relative dimension $2m-1$. There exist linear sections $X \hookrightarrow \Pro(\xi) \to \Pro_{\C}^2$, thanks to this fact, we obtain the cohomology groups of $X$
\begin{align*}
    H^i_B(X,\Z) \simeq H^i_B(\Pro(\xi),\Z) \text{ if }i<2m+1.
\end{align*}
Since we have an isomorphism  of torsion groups $J^k(X)_{\text{tors}}\simeq \CH_\et^k(X)^{\text{tors}}_{\text{hom}}$, thus we can conclude that $H^i_B(X,\Z)=0$ for $i\neq 2m+1$ an odd number, then $J^k(X)=0$, so $\CH_\et^k(X)^{\text{tors}}_{\text{hom}}=0$ and with that we obtain 
\begin{align*}
    \CH^{j}_\et(X)&\simeq \CH^0(\Pro^2_\C)\oplus \CH^1_\et(\Pro^2_\C) \oplus \CH^2_\et(\Pro^2_\C), \text{ if }j\neq m+1\\
    \CH^{m+1}_\et(X)&\simeq \CH^0(\Pro^2_\C)\oplus \CH^1_\et(\Pro^2_\C) \oplus \CH^2_\et(\Pro^2_\C) \oplus \CH^1_\et(\widetilde{C}),
\end{align*}
thus again by \cite[Theorem 1.1]{RS} the integral Hodge conjecture for 1-cycles holds for a quadric bundle $X\to \Pro_\C^2$.
\end{enumerate}
\end{remark}

\begin{corollary}
Let $k=\bar{k}$ be a field and let $f:X \to S$ be a conic bundle. Then we have an isomorphism
$h_\et(X) \simeq h_\et(S) \oplus h_\et(S)(-1) \oplus \text{Pr}(\tilde{C}/C)(-1)$. For a smooth projective surface $S$ which admits an integral étale Chow-Künneth decomposition, consider the motive $M=\bigoplus_{i=0}^6 M^i$ where each $M^i$ is defined as
\begin{align*}
    M^i :=\begin{cases}
    h^3_\et(S)\oplus h^1_\et(S)(-1) \oplus \text{Pr}(\tilde{C}/C)(-1) \text{ if }i=3\\
    h^i_\et(S)\oplus h^{i-2}_\et(S)(-1)\text{ if }i\neq 3
    \end{cases}
\end{align*}
then $h_\et(X)\simeq \bigoplus_{i=0}^6 M^i$.
\end{corollary}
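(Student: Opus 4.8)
\emph{Plan.} The corollary has two parts: the decomposition $h_\et(X)\simeq h_\et(S)\oplus h_\et(S)(-1)\oplus\text{Pr}(\tilde{C}/C)(-1)$ for an arbitrary conic bundle $f\colon X\to S$ over $k=\bar{k}$ ($X$ a threefold, $S$ a surface, $C\subset S$ the discriminant curve, smooth as in the setup above), and, when $S$ moreover admits an integral étale Chow--Künneth decomposition, the regrouping of that decomposition into the motives $M^i$. The second part is purely formal once the first is known, so I would concentrate on the first decomposition and prove it by lifting the classical rational decomposition of \cite{NS} to a morphism in $\ch_\et(k)$ and then checking that this morphism becomes an isomorphism under every realization functor.

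\emph{Constructing the comparison morphism.} Over $\Q$, by \cite{NS} one has $h(X)_\Q\simeq h(S)_\Q\oplus h(S)_\Q(-1)\oplus\text{Pr}(\tilde{C}/C)_\Q(-1)$, the three pieces being cut out by explicit correspondences: the two Tate-type summands from $f^{*}$ and from pushforward along $f$ combined with the relative hyperplane class $\zeta\in\CH^1(X)$ (the restriction of $c_1(\mathcal{O}_{\Pro_S(\xi)}(1))$, subject to the relation $[X]=2\zeta+f^{*}\eta$ in $\CH^1(\Pro_S(\xi))$ for some $\eta\in\CH^1(S)$), and the Prym summand from the cycle on $\tilde{C}\times X$ given by the family of components of the degenerate fibres of $f$, which are parametrized by the double cover $\tilde{C}\to C$. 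First I would verify that each of these correspondences can be taken with integral coefficients in the Lichtenbaum (étale) Chow groups, i.e. lifted to $\ch_\et(k)$: this uses the integral étale input of the previous sections --- the projective bundle formula for étale motives, the identification of $h^1_\et$ of a curve with the $1$-motive of its Jacobian, the integral Prym motive $\text{Pr}(\tilde{C}/C)=\text{im}(1-\iota)\subset h^1_\et(\tilde{C})$ constructed just above, and the integral étale/Lichtenbaum cycle-class comparisons of \cite{RS} and \cite{belt}. The outcome is a morphism
\[
\Phi\colon h_\et(S)\oplus h_\et(S)(-1)\oplus\text{Pr}(\tilde{C}/C)(-1)\longrightarrow h_\et(X)
\]
in $\ch_\et(k)$ lifting the rational decomposition.

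\emph{Showing $\Phi$ is an isomorphism.} By \cite[Proposition 5.4.12]{CD16} the family $\{\rho_\Q\}\cup\{\rho_{\Z/\ell}\}_{\ell\neq\text{char}(k)}$ is conservative on $\text{DM}_\et(k,\Z)$ (alternatively one invokes the improved Manin principle, Theorem \ref{immanin}, since $\Phi$ lies in $\ch_\et(k)$), so it suffices to show $\rho_\Q(\Phi)$ and each $\rho_{\Z/\ell}(\Phi)$ is an isomorphism. Under realization $\Phi$ becomes precisely the cohomological decomposition of a conic bundle recalled in this section: $H^i_\et(X,\Z_\ell)\cong H^i_\et(S,\Z_\ell)\oplus H^{i-2}_\et(S,\Z_\ell)(-1)$ for $i\neq 3$, and $H^3_\et(X,\Z_\ell)\cong H^3_\et(S,\Z_\ell)\oplus H^1_\et(S,\Z_\ell)(-1)\oplus H^1_\et(P_{\widetilde{C}},\Z_\ell)(-1)$, the three summands being the realizations of the three blocks of $\Phi$ --- and likewise rationally. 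Thus $\rho_\Q(\Phi)$ and all $\rho_{\Z/\ell}(\Phi)$ are isomorphisms, hence so is $\Phi$, which gives the first decomposition.

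\emph{The refined decomposition, and the main obstacle.} For the second part I would substitute a fixed integral étale Chow--Künneth decomposition $h_\et(S)\simeq\bigoplus_{j=0}^{4}h^j_\et(S)$ into the decomposition just proved, and likewise $h_\et(S)(-1)\simeq\bigoplus_{j=0}^{4}h^j_\et(S)(-1)=\bigoplus_{i=2}^{6}h^{i-2}_\et(S)(-1)$; collecting the summands by weight $i$, with the convention $h^j_\et(S)=0$ for $j\notin\{0,\dots,4\}$, the weight-$i$ part is $h^i_\et(S)\oplus h^{i-2}_\et(S)(-1)$ for $i\neq 3$ and $h^3_\et(S)\oplus h^1_\et(S)(-1)\oplus\text{Pr}(\tilde{C}/C)(-1)$ for $i=3$, which are exactly the $M^i$ of the statement, so $h_\et(X)\simeq\bigoplus_{i=0}^{6}M^i$. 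I expect the main obstacle to be the integral construction of $\Phi$ in the second step --- the realization check being routine given the cohomology already recorded --- and within it the two delicate points: carrying out with $\Z$-coefficients the normalizations that over $\Q$ require dividing by $2$ for the two Tate-type blocks, and realizing the Prym summand $\text{im}(1-\iota)$ integrally inside $h_\et(X)$; both rest on the étale/Lichtenbaum cycle machinery, the projective bundle formula, and the integral Hodge statements for conic and quadric bundles developed earlier in the article.
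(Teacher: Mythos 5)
Your top-level strategy---construct an integral comparison morphism $\Phi$ in $\ch_\et(k)$ lifting the rational decomposition of \cite{NS}, then apply the conservativity of $\{\rho_\Q\}\cup\{\rho_{\Z/\ell}\}_\ell$ or Theorem \ref{immanin} to deduce $\Phi$ is an isomorphism, and regroup formally for the refined Künneth statement---is sound, and in fact addresses the whole decomposition rather than only the weight-$3$ piece. But the route is genuinely different from the paper's, and the step you single out as the main obstacle is exactly where the two approaches diverge.

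The paper does not attempt to lift the explicit correspondences of \cite{NS} to $\Z$-coefficients, and so never confronts the division-by-$2$ normalizations you flag. Instead it concentrates on the $i=3$ summand: by \cite[Theorem 3.5]{belt} one has $J^2(X)\simeq\text{Alb}_S(k)\oplus\text{Pic}^0(S)\oplus P_{\tilde{C}}$; the isogeny $\phi$ between these abelian varieties produces, via the transposed graph $\Gamma_\phi^t$, a cycle inducing $M_1(J^2(X))_\Q\simeq M_1(\text{Alb}_S(k)\oplus\text{Pic}^0(S)\oplus P_{\tilde{C}})_\Q$; finite-dimensionality of $h^3(X)$ is used to pass from homological to Chow motives rationally; and the integral étale lift of this single cycle is supplied by \cite[Theorem 1.3.b]{RS} together with \cite[Proposition 3.1.7]{RoSo}. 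In other words, where you propose to build $\Phi$ block by block from the [NS] correspondences and then solve the integral normalization problem, the paper obtains the crucial cycle abstractly by lifting a rational cycle to Lichtenbaum cohomology. Your proposal as written leaves that normalization step open (you correctly identify it as the delicate point but do not resolve it); adopting the isogeny-plus-\cite{RS}-lifting argument would close the gap without touching the $1/2$ factors. The realization check you call routine does match the cohomological input the paper records, namely $H^3_\et(X,\Z_\ell)\simeq H^3_\et(S,\Z_\ell)\oplus H^1_\et(S,\Z_\ell)(-1)\oplus H^1_\et(P_{\widetilde{C}},\Z_\ell)(-1)$, so that part of the two arguments coincides.
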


\begin{proof}
Let us consider the following facts: for $X$ we have that the intermediate Jacobian $J^2(X)$ is an abelian variety isomorphic to $\text{Alb}_S(k)\oplus \text{Pic}^0(S)\oplus \mathcal{P}_{\tilde{C}}$, see \cite[Theorem 3.5]{belt}. Notice that $h^3(X) \in \ch(k)_\Q$ is a finite dimensional motive isomorphic to the 1-motive given by $M_1(J^2(X))_\Q$. Since we have an isomorphism of cohomology groups
\begin{align*}
    H^3_\et(X,\Z_\ell) &\simeq H^3_\et(S,\Z_\ell)\oplus H^1_\et(S,\Z_\ell)(-1) \oplus H^1_\et(P_{\widetilde{C}},\Z_\ell)(-1) 
\end{align*}
which is induced by an algebraic cycle since there is an isogeny $\phi: J^2(X)\to \text{Alb}_S(k)\oplus \text{Pic}^0(S)\oplus \mathcal{P}_{\tilde{C}}$, we take the cycle associated cycle $\Gamma_\phi^t$, this induces an isomorphism of motives $M_1(J^2(X))_\Q \simeq M_1(\text{Alb}_S(k)\oplus \text{Pic}^0(S)\oplus \mathcal{P}_{\tilde{C}})_\Q$. This also can be seen from the fact that we have an isomorphism of finite dimensional homological motives
\begin{align*}
  M_1(J^2(X))_\Q \simeq M_1(\text{Alb}_S(k)\oplus \text{Pic}^0(S)\oplus \mathcal{P}_{\tilde{C}})_\Q \in \ch_{\text{hom}}(k)  
\end{align*}
then the isomorphism can be lifted to $M_1(J^2(X))_\Q \simeq M_1(\text{Alb}_S(k)\oplus \text{Pic}^0(S)\oplus \mathcal{P}_{\tilde{C}})_\Q \in \ch(k)$. As this cycle exists up to rational coefficients, mixing the result \cite[Theorem 1.3.b]{RS} with the arguments given in \cite[Proposition 3.1.7]{RoSo}, we can conclude similarly that this cycle lifts to an integral étale cycles in different algebraically closed fields, concluding that 
\begin{align*}
    h^i_\et(X) \simeq \begin{cases}
    h^3_\et(S)\oplus h^1_\et(S)(-1) \oplus \text{Pr}(\tilde{C}/C)(-1) \text{ if }i=3\\
    h^i_\et(S)\oplus h^{i-2}_\et(S)(-1)\text{ if }i\neq 3.
    \end{cases}
\end{align*}
\end{proof}

\printbibliography[title={Bibliography}]
\info

\end{document}